\documentclass[11pt,a4paper]{article}
\usepackage{amsfonts,amsgen,amstext,amsbsy,amsopn,amsfonts,amssymb,amscd}
\usepackage[leqno]{amsmath}
\usepackage[amsmath,amsthm,thmmarks]{ntheorem}
\usepackage{epsf,epsfig}
\usepackage{float}
\usepackage{dsfont}
\usepackage{ebezier,eepic}
\usepackage{color}
\usepackage{tikz}
\usepackage{multirow}
\usepackage{mathrsfs}
\usepackage{graphicx}
\usepackage{subfigure}
\newtheorem{thm}{Theorem}[section]

\newtheorem{lem}[thm]{Lemma}

\newtheorem{false statement}{False statement}
\newtheorem{cor}[thm]{Corollary}

\theoremstyle{definition}

\newtheorem{claim}[thm]{Claim}

\newtheorem{conj}[thm]{Conjecture}

\newtheorem{problem}{Problem}

\makeatletter \@addtoreset{equation}{section}

\def\hh{\mathcal{H}}
\def\hm{\mathcal{M}}

\def\hht{\mathcal{T}}
\def\hd{\mathcal{D}}
\def\hhd{\mathscr{D}}
\def\he{\mathcal{E}}
\def\hf{\mathcal{F}}
\def\hg{\mathcal{G}}

\def\ha{\mathcal{A}}
\def\hb{\mathcal{B}}
\def\hp{\mathcal{P}}
\def\hq{\mathcal{Q}}

\def\hc{\mathcal{C}}
\def\hu{\mathcal{U}}
\def\hi{\mathcal{I}}
\def\hj{\mathcal{J}}
\def\hp{\mathcal{P}}

\begin{document}

\title{\bf\Large  Non-trivial intersecting families for direct products\thanks{\scriptsize{E-mail:  $^1$wangjianmath01@scu.edu.cn, $^2$wu@hunnu.edu.cn, $^3$huajunzhang@usx.edu.cn}
}}
\date{}
\author{Jian Wang$^1$, Biao Wu$^2$, Huajun Zhang$^3$\\[10pt]
$^{1}${\small Department of Mathematics, Sichuan University, Chengdu 610065, China}\\[6pt]
$^{2}${\small School of Mathematics and Statistics,
Hunan Normal University,
Changsha 410081, China}\\[6pt]
$^{3}${\small Department of Mathematics,
Shaoxing University,
Shaoxing 312000, China}\\[6pt]
}
\maketitle
\begin{abstract}
A family of sets is intersecting if any two of its sets intersect, and  non-trivial intersecting if there is not any element which appears in every set of the family.
The study of intersection problems (or Erd\H{o}s-Ko-Rado problems) on families of sets is one of the most important topics in extremal combinatorics.
A natural extension of the Erd\H{o}s-Ko-Rado problem in direct products setting was introduced by Frankl, and  developed by  Alon and Katona et al.
In 2018, Kwan, Sudakov and Vieira [J. Combin. Theory, Ser. A 156 (2018), 44--60] determine the maximum size of a non-trivial intersecting direct product family under the assumption that the sizes of all parts are sufficiently large relative to the set sizes in all parts.
Meanwhile, they raised the question of whether this result holds for the size of each part which depends only on its corresponding set size, and the question of characterizing the extremal families.
In the present paper, we answere these two questions.
Furthermore, the bounds on the size of each part in our result are best possible.
\end{abstract}

\section{Introduction}

Let $[n]$ denote the standard $n$-element set $\{1,2,\ldots,n\}$. For $1\leq i, j\leq n$, let $[i,j]=\{i,i+1,\ldots,j\}$. If $i>j$, then $[i,j]=\emptyset$. Let $2^{[n]}$ denote the power set of $[n]$ and let $\binom{[n]}{k}$ denote the collection of all $k$-element subsets of $[n]$. A family $\hf\subset \binom{[n]}{k}$ is called {\it intersecting} if $F\cap F'\neq \emptyset$ for all $F,F'\in \hf$. One of the most important results in extremal set theory is the Erd\H{o}s-Ko-Rado Theorem.

\begin{thm}[\cite{EKR}]\label{EKR}
Let $\hf\subset \binom{[n]}{k}$ be an intersecting family with $n\geq 2k$. Then
\begin{align}\label{ineq-ekr}
|\hf|\leq \binom{n-1}{k-1}.
\end{align}
\end{thm}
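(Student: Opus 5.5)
We will use Katona's cyclic permutation argument. Arrange the elements of $[n]$ around a circle by a cyclic permutation $\sigma$. An \emph{arc} of $\sigma$ is a set of $k$ elements that are consecutive along the circle, and there are exactly $n$ arcs for each $\sigma$. The plan has three steps: bound how many arcs of a single $\sigma$ can lie in $\hf$, double count pairs $(\sigma,F)$ with $F\in\hf$ an arc of $\sigma$, and then divide.

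\textbf{Key lemma.} For every cyclic order $\sigma$, at most $k$ of its arcs lie in $\hf$. This is the step needing care, and it is the only place where $n\geq 2k$ enters. Suppose some arc $A=\{a_1,\dots,a_k\}$, listed in circular order, belongs to $\hf$. Any other arc meeting $A$ either begins just after some $a_i$ or ends at some $a_i$ with $i<k$. This gives $2(k-1)$ candidate arcs, which pair up naturally: for each $i\in[k-1]$, the arc ending at $a_i$ and the arc starting at $a_{i+1}$. Since $n\ge 2k$, these two arcs are disjoint. Hence at most one arc from each pair lies in $\hf$, and together with $A$ itself this gives at most $1+(k-1)=k$ arcs. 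If no arc of $\sigma$ lies in $\hf$, the bound holds trivially.

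\textbf{Double counting.} Count pairs $(\sigma,F)$ where $\sigma$ is a cyclic order and $F\in\hf$ is an arc of $\sigma$.
\begin{itemize}
\item By the lemma, the number of pairs is at most $k\cdot (n-1)!$, since there are $(n-1)!$ cyclic orders.
\item For a fixed $k$-set $F$, the number of cyclic orders in which $F$ is an arc is $k!\,(n-k)!$. Indeed, we contract $F$ into a block, giving $(n-k)!$ cyclic arrangements of the block together with the remaining $n-k$ elements, and then order $F$ inside its block in $k!$ ways.
\end{itemize}
Combining the two counts gives
\begin{align*}
|\hf|\, k!\,(n-k)! \le k\,(n-1)!.
\end{align*}

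\textbf{Conclusion.} Dividing through, we obtain
\begin{align*}
|\hf|\le \frac{k\,(n-1)!}{k!\,(n-k)!}=\binom{n-1}{k-1},
\end{align*}
which is \eqref{ineq-ekr}.

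The only real obstacle is the key lemma. The bookkeeping there has two parts: correctly identifying the arcs that intersect $A$, and verifying that the paired arcs are disjoint. Disjointness requires that the two arcs in a pair, which together span $2k$ consecutive positions, do not wrap around the circle, and this is exactly what $n\ge2k$ guarantees.
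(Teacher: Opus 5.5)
Your proof is correct. It is Katona's cyclic-permutation argument, and each step checks out:
\begin{itemize}
\item the arcs other than $A$ that meet $A$ are exactly the $2(k-1)$ arcs starting at $a_2,\dots,a_k$ or ending at $a_1,\dots,a_{k-1}$;
\item each of your pairs occupies $2k\le n$ consecutive positions, so the two arcs in it are disjoint;
\item the count $k!\,(n-k)!$ of (directed) cyclic orders in which a fixed $k$-set is an arc is consistent with the total of $(n-1)!$.
\end{itemize}
Two points deserve an explicit sentence. First, arcs of $\sigma$ lying in $\hf$ must meet $A$ precisely because $\hf$ is intersecting, which is why only the $2(k-1)$ candidates matter. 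Second, the restriction $i<k$ must also apply to ``begins just after $a_i$'', since the arc starting just after $a_k$ is disjoint from $A$.

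The paper gives no proof of this theorem; it is quoted from Erd\H{o}s, Ko and Rado. Their original argument uses shifting and induction, the same shifting operation the paper adapts to direct products in Section~2. Your route is shorter and self-contained, and it shows that $n\ge 2k$ is used at exactly one point: the disjointness of paired arcs. Without extra work, however, it does not give the uniqueness of stars for $n>2k$ stated after the theorem, nor any structural information. The shifting approach is the tool the paper actually needs, since its analysis of non-trivial and direct-product families runs entirely through shifted families and their generating sets.
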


For $n>2k$, the equality holds if and only if $\hf=\{F\in \binom{[n]}{k}\colon x\in F\}$ for some $x\in [n]$.

Let $\hf\subset \binom{[n]}{k}$. If $\cap_{F\in \hf}F \neq \emptyset$ then $\hf$ is called {\it a star}. If $\hf$ is not a star then we call  $\hf$  {\it non-trivial}. Hilton and Milner \cite{HM} determined the maximum size of a non-trivial intersecting family.

\begin{thm}[\cite{HM}]\label{thm-hm}
 If $\hf\subset \binom{[n]}{k}$ is non-trivial intersecting and $n>2k\geq 4$, then
\begin{align}\label{ineq-nontrival}
|\hf| \leq \binom{n-1}{k-1}- \binom{n-k-1}{k-1} +1.
\end{align}
\end{thm}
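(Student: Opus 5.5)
I would prove the Hilton--Milner bound by the shifting method combined with a Kruskal--Katona-type counting step. Throughout, \(\hf\subset\binom{[n]}{k}\) is non-trivial intersecting and \(n>2k\geq 4\).

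\textbf{Step 1: reduction to a shifted family.} Apply the \((i,j)\)-shifts \(S_{ij}\) repeatedly. Each shift preserves the size of \(\hf\) and preserves the intersecting property. It may, however, turn a non-trivial family into a star. The standard fix, following Frankl, is as follows. Before applying any shift that would create a star, stop and examine the family \(\hg\) obtained at that moment. This \(\hg\) has the property that \(S_{ij}(\hg)\) is a star centred at \(i\), so every set of \(\hg\) meets \(\{i,j\}\). In that case I bound \(\hg\) directly: it is intersecting and contained in the family of \(k\)-sets meeting \(\{i,j\}\), which allows a direct count. Otherwise, at the end we obtain a shifted non-trivial intersecting family. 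So there are two cases to handle.

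\textbf{Step 2: the shifted case.} Let \(\hf\) be shifted and non-trivial. Split \(\hf=\hf(1)\cup\hf(\bar1)\), according to whether a set contains \(1\) or not. Non-triviality gives \(\hf(\bar1)\neq\emptyset\), and shiftedness then forces \([2,k+1]\in\hf\). Every set containing \(1\) must meet every set of \(\hf(\bar1)\). Write \(\ha=\{F\setminus\{1\}:1\in F\in\hf\}\subset\binom{[2,n]}{k-1}\) and \(\hb=\hf(\bar1)\subset\binom{[2,n]}{k}\). These are cross-intersecting families on \(n-1\) points. The key inequality I would aim for is
\[
|\ha|+|\hb|\le \binom{n-1}{k-1}-\binom{n-k-1}{k-1}+1
\qquad\text{whenever }\hb\neq\emptyset .
\]
I would prove it by induction, or more cleanly via the Kruskal--Katona theorem applied to the family of complements in \([2,n]\) of the sets of \(\hb\). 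The number of \((k-1)\)-sets missing every member of \(\ha\)'s complement structure is at least the shadow-type bound, which is minimized when \(|\hb|=1\). With \(\hb=\{[2,k+1]\}\), this gives exactly \(\binom{n-k-1}{k-1}\) excluded \((k-1)\)-sets.

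\textbf{Step 3: the stopped case.} Here every set of \(\hg\) meets \(\{1,2\}\) (after relabelling). Count separately the sets containing exactly one of \(1,2\) and those containing both. Using intersectingness and non-triviality, the two ``exactly one'' parts form a non-empty cross-intersecting pair on \([3,n]\). Applying the same cross-intersecting bound from Step 2 gives a bound no larger than the Hilton--Milner number; this needs a short binomial comparison using \(n>2k\).

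\textbf{Main obstacle.} The heart of the proof is the cross-intersecting inequality in Step 2. Specifically, one must show that \(|\ha|+|\hb|\) is maximized when \(\hb\) is a single set, rather than when \(\hb\) is large. I would first try the Frankl--Tokushige approach: consider the function \(|\hb|\mapsto\) (maximum of \(|\ha|\)). By Kruskal--Katona, the loss in \(|\ha|\) grows at least as fast as the gain in \(|\hb|\) when \(n>2k\), and the sum is convex in the relevant range. Hence the maximum is attained at an endpoint, and the endpoint with \(|\hb|=1\) dominates.
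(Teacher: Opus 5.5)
The paper gives no proof of this statement: it is quoted from Hilton and Milner and used as a black box, so your proposal has to stand on its own. Step~1 is the standard Frankl reduction and is fine. Step~3 also works, but it needs a different lemma from the one you cite. What you need there is the equal-uniformity Hilton--Milner cross-intersecting bound $|\ha_1|+|\ha_2|\le\binom{n-2}{k-1}-\binom{n-k-1}{k-1}+1$ for non-empty cross-intersecting $\ha_1,\ha_2\subset\binom{[3,n]}{k-1}$. Adding $\binom{n-2}{k-2}$ for the sets containing both $1$ and $2$ then gives the Hilton--Milner number exactly by Pascal's rule, with no further comparison needed.

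The genuine gap is the key inequality of Step~2, which is false under the hypotheses you retain. Take $\hf=\{F\in\binom{[n]}{k}\colon 2\in F\}$. Then $\ha=\{A\in\binom{[2,n]}{k-1}\colon 2\in A\}$ and $\hb=\{B\in\binom{[2,n]}{k}\colon 2\in B\}$ are cross-intersecting, and $\hb$ is non-empty, intersecting and contains $[2,k+1]$. Yet $|\ha|+|\hb|=\binom{n-1}{k-1}$, which exceeds the Hilton--Milner number by $\binom{n-k-1}{k-1}-1\geq k-1>0$. The condition $\hb\neq\emptyset$ encodes non-triviality only with respect to the element $1$. Once you pass to the pair $(\ha,\hb)$, you have discarded the shiftedness that rules out a star centred elsewhere.

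The same example refutes your Kruskal--Katona/convexity heuristic. The value at $|\hb|=\binom{n-2}{k-1}$ beats the value at $|\hb|=1$, so the loss in $|\ha|$ does not keep pace with the gain in $|\hb|$. Unlike Step~3, the two endpoints are not symmetric, because $\ha$ and $\hb$ have different uniformities.

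To repair Step~2 you must carry more structure from shiftedness into the count. For instance, $\hb=\hf(\bar1)$ is $2$-intersecting: if $B\cap B'=\{x\}$ for $B,B'\in\hb$, then $(B\setminus\{x\})\cup\{1\}\in\hf$ by shiftedness, and it is disjoint from $B'$. This excludes the star example. The cross-intersecting bound for a non-empty $2$-intersecting $\hb$ is the statement you actually have to prove. That is where the real work of the theorem lies, and your sketch does not supply it.
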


Define the {\it Hilton-Milner family} $\hh\hm(n,k)$ as
\[
\hh\hm(n,k) =\left\{F\in \binom{[n]}{k}\colon 1\in F,\ F\cap [2,k+1]\neq \emptyset\right\}\cup \{[2,k+1]\}
\]
and the {\it triangle} family $\hht(n,k)$ as
\[
\hht(n,k) =\left\{F\in \binom{[n]}{k}\colon |F\cap [3]|\geq 2\right\}.
\]
Note that $\hh\hm(n,2)=\hht(n,2)$.
The equality holds in \eqref{ineq-nontrival} if and only if $\hf$ is isomorphic to $\hh\hm(n,k)$ for $k\neq 3$ and isomorphic to either $\hh\hm(n,k)$ or $\hht(n,k)$ for $k=3$.

A natural extension of the Erd\H{o}s-Ko-Rado problem in direct product setting was introduced by Frankl~\cite{F1996}, in connection with a result of Sali~\cite{Sal92} (see also~\cite{FT93}).

Let $n_1,n_2,\ldots,n_p,k_1,k_2,\ldots,k_p$ be positive integers.
For $\hf_i\subset 2^{[n_i]}$, $i=1,2,\ldots,p$,
the {\it direct product} of $\hf_1,\hf_2,\ldots,\hf_p$ is defined as
\[
\prod_{1\leq i\leq p} \hf_i =\left\{(F_1,F_2,\ldots,F_p)\colon F_i\in\hf_i,\ i=1,2,\ldots,p\right\}.
\]
For simplicity, we often use $\vec{F}$ to denote $(F_1,F_2,\ldots,F_p)$, and similarly for $\vec{E}$ and $\vec{A}$.
For $\vec{F}\in \prod\limits_{1\leq i\leq p}2^{[n_i]}$, define
\[
|\vec{F}| = |F_1|+|F_2|+\cdots+|F_p|.
\]
For $\vec{F},\vec{F}'\in \prod\limits_{1\leq i\leq p}2^{[n_i]}$, define
\[
\vec{F}\cap \vec{F}' =(F_1\cap F_1',F_2\cap F_2',\ldots, F_p\cap F_p')\mbox{ and } \vec{F}\cup \vec{F}' =(F_1\cup F_1',F_2\cup F_2',\ldots, F_p\cup F_p').
\]
A family $\hf\subset \prod\limits_{1\leq i\leq p}2^{[n_i]}$ is called a {\it direct product} family. We call $\hf\subset \prod\limits_{1\leq i\leq p}2^{[n_i]}$  {\it intersecting} if $|\vec{F}\cap \vec{F}'|\geq 1$ for all $\vec{F}, \vec{F}'\in \hf$.

By using the algebraic method, Frankl \cite{F1996}  proved an Erd\H{o}s-Ko-Rado theorem for direct product families.

\begin{thm}[\hspace{-0.001em}\cite{F1996}]\label{EKR-directprod}
Let $n_1,n_2,\ldots,n_p,k_1,k_2,\ldots,k_p$ be positive integers with  $2\leq n_1/k_1\leq \dots \leq n_p/k_p$. Suppose that $\hf\subseteq \prod\limits_{1\leq i\leq p} \binom{[n_i]}{k_i}$ is intersecting. Then
\[
|\hf| \leq \frac{k_1}{n_1}\prod_{i=1}^p\binom{n_i}{k_i}.
\]
\end{thm}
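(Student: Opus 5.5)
Frankl's original argument is algebraic; I would instead follow the Katona cyclic-permutation approach, extended to products. The goal is to reduce the statement to a bound on intersecting families inside a product of cyclic orders.

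\textbf{Step 1: the reduction.} For each $i$, fix a cyclic permutation $\sigma_i$ of $[n_i]$. Call a $k_i$-set an arc if it is a set of $k_i$ consecutive elements under $\sigma_i$. There are $n_i$ arcs in part $i$, so the product of arcs has $\prod_i n_i$ members. Averaging over all choices $(\sigma_1,\ldots,\sigma_p)$, each $\vec F\in\prod\binom{[n_i]}{k_i}$ is a product of arcs with the same probability $\prod_i n_i/\binom{n_i}{k_i}$. It therefore suffices to prove the following: an intersecting family $\mathcal{C}$ of products of arcs has
\[
|\mathcal{C}|\le \frac{k_1}{n_1}\prod_i n_i .
\]

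\textbf{Step 2: the product-of-arcs bound.} Two products of arcs intersect iff they intersect in some coordinate. So two vectors $(a_1,\ldots,a_p),(b_1,\ldots,b_p)\in\prod\mathbb{Z}_{n_i}$ of arc starting points are disjoint iff $a_i-b_i\notin(-k_i,k_i)$ for every $i$.

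I would partition $\prod_i\mathbb{Z}_{n_i}$ into translates of a structure built from pairwise disjoint tuples, following the fractional-chromatic idea. The target is the independence ratio $k_1/n_1$, the minimum of $k_i/n_i$.

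Define the map $x\mapsto \lfloor x\,n_1/n_i\rfloor$ or, more cleanly, the real embedding $t\mapsto (t n_1,\ldots,t n_p)$ modulo the $n_i$. For $t\in[0,1)$ on a continuous circle, let $A(t)$ be the product of the arcs of part $i$ starting at the real position $t n_i$, rounded down. If $|t-s|$ (taken circularly) is at least $k_1/n_1$, then it is at least $k_i/n_i$ for each $i$, using the ordering $n_1/k_1\le\cdots\le n_p/k_p$. In particular this holds in coordinate $1$, so the rounded starting points are disjoint in every coordinate. Hence points spaced $k_1/n_1$ apart give pairwise disjoint members.

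Applying a Katona-style argument on the continuous circle (the usual $n\ge 2k$ lemma, here using $n_1/k_1\ge 2$), an intersecting family meets the orbit of $t$ in measure at most $k_1/n_1$. Averaging over random translates in $\prod\mathbb{Z}_{n_i}$ then gives $|\mathcal{C}|\le (k_1/n_1)\prod n_i$.

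\textbf{Main obstacle.} The rounding in Step 2 is the crux. I must check that disjointness survives the floors, and that the averaging is uniform over $\prod\mathbb{Z}_{n_i}$. Uniformity can be arranged by adding an independent uniform offset $u_i\in\mathbb{Z}_{n_i}$ in each coordinate.

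If that becomes messy, my fallback is Hoffman's ratio bound. Consider the Cayley graph on $\prod_i\binom{[n_i]}{k_i}$ in which two tuples are adjacent iff they are disjoint in every coordinate. Its adjacency matrix is the tensor product of the Kneser matrices. Its eigenvalues are products of the Kneser eigenvalues $(-1)^j\binom{n_i-k_i-j}{k_i-j}$. The largest is $\prod_i\binom{n_i-k_i}{k_i}$. The most negative is obtained by taking $j=1$ in a single coordinate, and $n_1/k_1$ minimal picks coordinate $1$. This gives the ratio
\[
\frac{\binom{n_1-k_1-1}{k_1-1}/\binom{n_1-k_1}{k_1}}{1+\binom{n_1-k_1-1}{k_1-1}/\binom{n_1-k_1}{k_1}}=\frac{k_1}{n_1}.
\]
The only work there is verifying that no other product of eigenvalues is more negative relative to the top one.
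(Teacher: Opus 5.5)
The paper does not prove this statement. It quotes it from Frankl and only remarks that his proof is algebraic, so your Hoffman-bound fallback is the route closest to the cited source. Your main argument is a genuinely different, combinatorial one, and both are correct.

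\textbf{The fallback.} The check you postpone is routine. For $n_i\ge 2k_i$ the ratios $\binom{n_i-k_i-j}{k_i-j}/\binom{n_i-k_i}{k_i}$ are nonincreasing in $j$, since the consecutive quotient is $\frac{k_i-j}{n_i-k_i-j}\le 1$. A negative eigenvalue of the tensor product needs some odd, hence positive, $j_i$. So its modulus is at most $\max_i\frac{k_i}{n_i-k_i}=\frac{k_1}{n_1-k_1}$ times the degree, and Hoffman's bound then gives exactly $\frac{k_1}{n_1}\prod_i\binom{n_i}{k_i}$.

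\textbf{The main route.} The three points you flag all go through.
\begin{itemize}
\item Rounding: if a real difference $x-y$ lies in $[k_i,n_i-k_i]$ up to sign, the integer $\lfloor x\rfloor-\lfloor y\rfloor$ differs from it by less than $1$, so it lies in the same range.
\item Uniformity: the offsets $u_i$ cancel in differences, and they make $(\lfloor tn_i\rfloor+u_i)_i$ uniform on $\prod_i\mathbb{Z}_{n_i}$ for each fixed $t$.
\item Continuous Katona lemma: a measurable $T\subset\mathbb{R}/\mathbb{Z}$ with all circular distances $<c\le\frac12$ has measure at most $c$. To see this, fix $x\in T$; then $T\subset(x-c,x+c)$, and $x+s$ and $x+s-c$ ($0<s<c$) are never both in $T$.
\end{itemize}
One slip: $k_1/n_1$ is the \emph{largest} of the $k_i/n_i$, not the smallest as you wrote, although your inequalities use it correctly.

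\textbf{A simplification.} You can avoid measure theory by letting $t$ run only over $x/n_1$ with $x\in\mathbb{Z}_{n_1}$. Then $x\mapsto(\lfloor xn_i/n_1\rfloor+u_i)_i$ is a homomorphism from the circular graph $K_{n_1/k_1}$ (vertices $\mathbb{Z}_{n_1}$, adjacent iff circular distance $\ge k_1$) into $\prod_i K_{n_i/k_i}$. Katona's lemma plus averaging over $u$ is then just the no-homomorphism lemma for a vertex-transitive target.

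\textbf{What each route buys.} Your route is elementary and needs no Kneser spectrum. It also explains structurally why the smallest ratio $n_1/k_1$ governs: the factor with the largest independence ratio maps into every other factor, hence into the product. The spectral route works directly on $\prod_i\binom{[n_i]}{k_i}$ without the cyclic-order reduction. In the equality case it forces the characteristic vector of an extremal family into the span of the all-ones vector and the $\lambda_{\min}$-eigenspace, which is the usual starting point for uniqueness results that averaging does not give directly.
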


We note that Frankl’s theorem may be viewed as a statement concerning the size of the maximum independent set in a particular product graph. Research on independent sets in graph products--especially in graph powers--has a rich tradition, with a substantial body of literature dedicated to this area.
In 2016, Frankl's result (Theorem \ref{EKR-directprod}) was rediscovered by Alon and Katona \cite{AlonKa}.
Let ${\hf}\subset \prod\limits_{1\leq i\leq p}\binom{[n_i]}{k_i}$. We say that $\hf$ is {\it a star of the $i$th coordinate} with centre $x$ if  $x\in F_i$ for all $\vec F\in {\hf}$. A family ${\hf}\subset \prod\limits_{1\leq i\leq p}\binom{[n_i]}{k_i}$ is called {\it non-trivial} if it is not a star of the $i$th coordinate for all $i=1,2,\ldots,p$.
Alon and Katona \cite{AlonKa} also made the following conjecture.

\begin{conj}[\cite{AlonKa}]\label{conj-1}
Let $n_1\geq 2k_1$ and $n_2\geq 2k_2$.
Suppose that $\hf\subseteq  \binom{[n_1]}{k_1}\times \binom{n_2]}{k_2}$ is non-trivial intersecting. Then
\[
|\hf| \leq \max\left\{|\hh\hm(n_1,k_1)|\binom{n_2}{k_2},\binom{n_1}{k_1}|\hh\hm(n_2,k_2)|\right\}.
\]
\end{conj}

In 2018, Kwan, Sudakov and Vieira \cite{KSV} disproved Conjecture \ref{conj-1}. In general, they determined the maximum size of a non-trivial intersecting direct product family for large $n_i$, $i\in [p]$.
 Before stating their result, let us introduce some constructions. Let $\vec{n}=(n_1,n_2,\dots,n_p)$ and $\vec{k}=(k_1,k_2,\dots,k_p)$.
 For any $t\in [p]$ and $S\subset [p]\setminus \{t\}$, define the families $\hh\hm_1(\vec{\mathstrut n},\vec{\mathstrut k},S,t)$ and $\hh\hm_2(\vec{\mathstrut n},\vec{\mathstrut k},S,t)$ as follows.
If $\sum_{j\in S} k_j+k_t \geq 2$,  the family $\hh\hm_1(\vec{\mathstrut n},\vec{\mathstrut k},S,t)$ consists of all sets $\prod\limits_{1\leq i\leq p}\binom{[n_i]}{k_i}$ such that one of the following conditions is satisfied:
 \begin{itemize}
  \item $F_t=[2,k_t+1]$ and $F_s=[k_s]$ for all $s\in S$, or,
  \item $1\in F_t$ and either $F_t \cap [2,k_t+1]\neq \emptyset $ or $F_s\cap [k_s]\neq \emptyset$ for some $s\in S$.
\end{itemize}
 If $\sum_{j\in S} k_j \geq 2$,  the family $\hh\hm_2(\vec{\mathstrut n},\vec{\mathstrut k},S,t)$ consists of all sets $\prod\limits_{1\leq i\leq p}\binom{[n_i]}{k_i}$ such that one of the following conditions is satisfied:
 \begin{itemize}
  \item $F_s=[k_s]$ for all $s\in S$, or,
  \item $1\in F_t$ and  $F_s\cap [k_s]\neq \emptyset$ for some $s\in S$.
\end{itemize}
The families $\hh\hm_1(\vec{\mathstrut n},\vec{\mathstrut k},S,t)$ and $\hh\hm_2(\vec{\mathstrut n},\vec{\mathstrut k},S,t)$ are non-trivially intersecting with
 \begin{small}\[
|\hh\hm_1(\vec{\mathstrut n},\vec{\mathstrut k},S,t)|=\left(\binom{n_{t}-1}{k_{t}-1}\prod_{s\in S}\binom{n_{s}}{k_{s}}-\binom{n_{t}-k_{t}-1}{k_{t}-1}\prod_{s\in S}\binom{n_{s}-k_{s}}{k_{s}}+1\right)\prod_{s\notin S\cup\left\{ t\right\} }\binom{n_{s}}{k_{s}},
\]\end{small}
and
 \begin{small}\[
|\hh\hm_2(\vec{\mathstrut n},\vec{\mathstrut k},S,t)|=\left(\binom{n_{t}-1}{k_{t}-1}\left(\prod_{s\in S}\binom{n_{s}}{k_{s}}-\prod_{s\in S}\binom{n_{s}-k_s}{k_{s}}\right)+\binom{n_{t}-1}{k_{t}}\right)\prod_{s\notin S\cup\left\{ t\right\} }\binom{n_{s}}{k_{s}}.
\]\end{small}
Let
 \begin{small}
 \[
 \Omega_1(\vec{\mathstrut n},\vec{\mathstrut k}):=\left\{(S,t)\colon  t\in [p],\ S\subset [p]\setminus \{t\}, \sum_{j\in S} k_j+k_t \geq 2\right\}
 \]
 and
 \[
 \Omega_2(\vec{\mathstrut n},\vec{\mathstrut k}):=\left\{(S,t)\colon  t\in [p],\ S\subset [p]\setminus \{t\},  \sum_{j\in S} k_j \geq 2\right\}.
 \]\end{small}
 Define
\begin{align*}
M_1(\vec{\mathstrut n},\vec{\mathstrut k}):= \max_{(S,t)\in \Omega_1(\vec{n},\vec{k})} |\hh\hm_1(\vec{\mathstrut n},\vec{\mathstrut k},S,t)| \:\: {\rm and}\:\:
M_2(\vec{\mathstrut n},\vec{\mathstrut k}):= \max_{(S,t)\in \Omega_2(\vec{n},\vec{k})} |\hh\hm_2(\vec{\mathstrut n},\vec{\mathstrut k},S,t)|.
\end{align*}

\begin{thm}[\cite{KSV}]\label{thm-0}
For any $k_1,\ldots,k_p\geq 1$, there is an integer $n_0=n_0(k_1,\ldots,k_p)$ such that if $n_1,\ldots,n_p\geq n_0$ and if ${\hf} \subset \prod\limits_{1\leq i\leq p} \binom{[n_i]}{k_i}$ is a non-trivial intersecting family, then
\[
|{\hf}| \leq \max \{M_1(\vec{\mathstrut n},\vec{\mathstrut k}),M_2(\vec{\mathstrut n},\vec{\mathstrut k})\}.
\]
\end{thm}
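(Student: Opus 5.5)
Our plan is a stability argument driven by the structure of transversals (covers). For a non-trivial intersecting family $\hf\subset \prod_{1\leq i\leq p}\binom{[n_i]}{k_i}$, view each $\vec F$ as a $K$-subset of the disjoint union $X=[n_1]\sqcup\cdots\sqcup[n_p]$ with $K=\sum_i k_i$. Then $\hf$ is an ordinary intersecting $K$-uniform family on $X$, and it is non-trivial in the usual sense. Let $\tau(\hf)$ be the covering number of $\hf$ as a hypergraph on $X$. We split into the cases $\tau(\hf)\geq 3$ and $\tau(\hf)=2$.

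\emph{Case $\tau(\hf)\geq 3$.} We use the standard Frankl-type bound. Every $\vec F\in\hf$ must meet a fixed member $\vec A\in\hf$, which gives $K$ choices of a first element. Since no single element covers $\hf$, some member avoids that element, and $\vec F$ must meet it too, giving another $K$ choices. Since no pair covers, there is a third forced element, again with at most $K$ choices. Hence $|\hf|\leq K^3\cdot \max \frac{\prod_i\binom{n_i}{k_i}}{n_an_bn_c}$-type bound. More precisely, fixing three elements in parts $a,b,c$ costs a factor of order $\prod_{j} (k_j/n_j)^{m_j}$ where $m_j$ counts the fixed elements in part $j$. By contrast, $M_1$ (take $S=\emptyset$ whenever some $k_t\geq 2$, or a suitable singleton $S$) has order $\frac{k_t}{n_t}\cdot c\cdot\frac{1}{\min n}\prod\binom{n_i}{k_i}$, i.e. only two "$1/n$" factors. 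So for $n_i\geq n_0(\vec k)$ this case gives a strictly smaller bound. We must double-check the degenerate situation where all $k_i=1$, but then $\Omega_2$ and $M_2$ handle the extremes and the same counting applies.

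\emph{Case $\tau(\hf)=2$.} Pick a cover $\{x,y\}$. Since each $\vec F$ meets $\{x,y\}$, and $\hf$ is not a star, both $\hf(x\bar y)$ and $\hf(\bar x y)$ are nonempty. Two subcases arise: $x,y$ lie in the same coordinate $t$, or in different coordinates $t\ne s$. In each subcase we write $\hf=\hf(xy)\cup\hf(x\bar y)\cup\hf(\bar xy)$, and the cross-intersection condition between $\hf(x\bar y)$ and $\hf(\bar x y)$ is the key constraint. Let $\hb=\hf(\bar xy)$ be the smaller part, at least in the sense that it is the one that will be tiny. Every member of $\hf(x\bar y)$ must meet every $\vec B\in\hb$. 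We then bound $|\hf(x\bar y)|$ by counting sets through $x$ hitting a fixed $\vec B$, with the hitting occurring on some set $S$ of coordinates. Optimizing over which coordinates are involved, and over whether $\hb$ is a single set or larger, yields exactly the two shapes: $\hh\hm_1$ when $x,y$ are in the same coordinate $t$ (with one exceptional set $[2,k_t+1]\times\prod_S[k_s]$), and $\hh\hm_2$ when $\hf(\bar x y)$ is "everything avoiding $x$ but fixed on $S$". To handle $|\hb|\geq 2$, we show that two distinct members of $\hb$ force $\hf(x\bar y)$ to satisfy two independent hitting conditions. That costs an extra $1/n$ factor, which outweighs the gain of $|\hb|$ unless $\hb$ is essentially the full $\hh\hm_2$-type piece. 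For this we use a shifting/Kruskal–Katona-free crude count, plus the observation that within $\hb$ the coordinates outside $S$ are free.

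\emph{Main obstacle.} The hardest step is the final optimization in Case $\tau=2$. We must compare, for all admissible $(S,t)$, the sizes $|\hh\hm_1|$ and $|\hh\hm_2|$ against the bounds from intermediate configurations where $\hb$ is neither a single set nor the whole structured piece. We also need to show that a general pair of cross-intersecting families $(\hf(x\bar y),\hb)$ is dominated by the product-structured ones. Here we would first try a Hilton–Milner-style cross-intersecting inequality in the product setting, proved by induction on $p$ using Theorem~\ref{thm-hm} coordinatewise. If that fails, we fall back to crude counting with $n_0$ large, which suffices since the theorem allows $n_0$ to depend on $\vec k$.
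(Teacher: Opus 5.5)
You correctly split by covering number, and for $\tau(\hf)\ge 3$ the Frankl-type bound $|\hf|\le K^3\max_{x,y,z}|\{\vec F\in\hf\colon x,y,z\in\vec F\}|$ is fine. The comparison, though, must be made against a candidate adapted to the parts carrying the forced elements, not against a single scale $1/\min n$, since the $n_i$ are unrelated. For parts $a\neq b$ containing forced elements, use $|\hh\hm_1(\vec n,\vec k,\{b\},a)|\ge \frac{k_a}{n_a}\frac{k_b}{n_b}\prod_i\binom{n_i}{k_i}$; if all three forced elements lie in part $a$, use $\hh\hm_1(\vec n,\vec k,\emptyset,a)$.

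\textbf{The gap: the case $\tau(\hf)=2$.} Every extremal family has $\tau=2$, so this case is the whole theorem. You only describe an optimization here, and the mechanism you propose for it is false. You claim that $\hh\hm_1$ arises when $\hf(\bar xy)$ is a single set, and that each further member of $\hf(\bar xy)$ costs an extra $1/n$ factor that outweighs its gain unless $\hf(\bar xy)$ is the full $\hh\hm_2$-type piece. But take $\hh\hm_1(\vec n,\vec k,S,t)$ with cover $\{1,2\}\subset[n_t]$. There $\hf(\bar xy)$ is not one set but the cylinder $\{\vec F\colon F_t=[2,k_t+1],\ F_s=[k_s]\ (s\in S)\}$, of size $\prod_{j\notin S\cup\{t\}}\binom{n_j}{k_j}$. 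Such configurations can be strictly optimal. For $p=2$, $k_1=k_2=3$ and $n_2$ much larger than $n_1^2$,
\[
|\hh\hm_1(\vec n,\vec k,\emptyset,1)|-|\hh\hm_1(\vec n,\vec k,\{2\},1)|=\binom{n_2}{3}-1-\binom{n_1-4}{2}\left(\binom{n_2}{3}-\binom{n_2-3}{3}\right)>0 .
\]
The second family is exactly your single-set configuration. Moreover, $\hht(\vec n,\vec k,1)$, whose $\hf(\bar xy)$ is larger still, ties with the first. Here the loss in $\hf(x\bar y)$ is only about $9\binom{n_1-4}{2}/n_2$ times the gain in $\hf(\bar xy)$. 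With independent $n_i$ there is no single scale $1/n$. Whether adding a part $u$ to $S$ helps depends on comparing $\binom{n_t-k_t-1}{k_t-1}\prod_{s\in S}\binom{n_s-k_s}{k_s}$ with roughly $n_u/k_u^2$, and this can go either way.

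\textbf{What is missing.} You need an exact statement: $|\hf(xy)|+|\hf(x\bar y)|+|\hf(\bar xy)|\le\max\{M_1,M_2\}$ for every non-empty cross-intersecting pair $(\hf(x\bar y),\hf(\bar xy))$ in a product space. This is needed in two settings. When $x,y$ lie in the same part, the uniformities are equal. When $x\in[n_t]$ and $y\in[n_s]$ with $t\neq s$, the uniformities are $(k_t-1,k_s)$ versus $(k_t,k_s-1)$; the natural cover $\{1_t,1_s\}$ of $\hh\hm_2(\vec n,\vec k,S,t)$ is of this type. Crude counting with $n_0$ large cannot supply this. It gives at best $(1+o(1))\max\{M_1,M_2\}$, while the competitors tie exactly or differ only in lower-order terms, such as the $+1$ in $\hh\hm_1$ and the $\binom{n_t-1}{k_t}$ term in $\hh\hm_2$. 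You would need a stability step followed by an exact local analysis, and your suggested induction on $p$ via Theorem~\ref{thm-hm} is left unspecified.

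The paper obtains exactness differently. It shifts to a shifted maximum non-trivial family (Lemma~\ref{thm-key}) and takes a minimal generating set. It then uses the exchange Lemma~\ref{lem-key4}: one of the two switched families is strictly larger, hence must be a star. This forces the generating set into the form of Lemma~\ref{lem-4.1}. Finally it optimizes the parameters $\ell_i$ in Claims~\ref{claim-4.1} and~\ref{claim-4.2}. This works for all $n_i>2k_i$, and so implies the statement.
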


In the historical development of the Erd\H{o}s-Ko-Rado problem, determining the threshold of the parameter $n$ for which a certain extremal phenomenon persists is a very important problem. For instance, the lower bound  $n \ge (t+1)(k-t+1)$ in the Erd\H{o}s-Ko-Rado theorem was completely resolved step by step by several groups of scholars.
At the same time, determining the structure of extremal set families is also a very natural, fundamental, and important problem.
Kwan, Sudakov and Vieira \cite{KSV} proposed the following two problems.

\begin{problem}[\cite{KSV}]
Whether the bound in Theorem \ref{thm-0} remains valid when we do not make the assumption that the parts are large. As an intermediate problem, one might hope to prove a version of Theorem \ref{thm-0} where each $n_s$ is only assumed to be large relative to its corresponding $k_s$, not necessarily relative to all the $k_s$ at once.
\end{problem}

\begin{problem}[\cite{KSV}]
How to deal with potential non-shifted extremal families in Theorem \ref{thm-0}.
\end{problem}

For some $t\in [p]$ with $k_t\geq 3$, define the {\it generalized triangle family} $\hht(\vec{\mathstrut n},\vec{\mathstrut k},t)$ as
\begin{align*}
\hht(\vec{\mathstrut n},\vec{\mathstrut k},t) =&\left\{\vec{F}\in\prod_{1\leq i\leq p} \binom{[n_i]}{k_i}\colon |F_t\cap [3]|\geq 2\right\}.
\end{align*}

In the present paper,  we answer Problem 1 and  Problem 2.
Furthermore, the bounds on each $n_i$ in our result are best possible.

\begin{thm}\label{thm-1}
Let $n_1,\ldots, n_p, k_1,\ldots,k_p\geq 1$ with $n_i>2k_i$, $i=1,2,\ldots,p$.
If ${\hf} \subset \prod\limits_{1\leq i\leq p} \binom{[n_i]}{k_i}$ is a non-trivial intersecting family, then
\[
|{\hf}| \leq \max \{M_1(\vec{\mathstrut n},\vec{\mathstrut k}),M_2(\vec{\mathstrut n},\vec{\mathstrut k})\}.
\]
Moreover, the equality holds if and only if  $\hf$ is isomorphic to either $\hh\hm_1(\vec{\mathstrut n},\vec{\mathstrut k},S,t)$ for some $(S,t)\in  \Omega_1(\vec{\mathstrut n},\vec{\mathstrut k})$ or $\hh\hm_2(\vec{\mathstrut n},\vec{\mathstrut k},S,t)$ for some $(S,t)\in  \Omega_2(\vec{\mathstrut n},\vec{\mathstrut k})$ or $\hht(\vec{\mathstrut n},\vec{\mathstrut k},t)$ for some $t\in [p]$ with $k_t=3$.
\end{thm}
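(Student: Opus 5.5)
We plan to reduce the direct product problem to a problem about cross-intersecting families via the standard ``shadow'' decomposition, and then use shifting together with a careful analysis of the non-shifted configurations.

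\textbf{Step 1: shifting within each coordinate.} For each coordinate $i$ and each pair $x<y$ in $[n_i]$ we apply the shift $S_{x,y}$ to the $i$th coordinate only. This operation preserves the size and the intersecting property of the family. It may destroy non-triviality. The usual remedy is as follows: whenever a shift would make $\hf$ a star of some coordinate, we stop. At that point we are left with a family $\hf$ that is non-trivial but such that $S_{x,y}(\hf)$ is a star of coordinate $i$ with centre $x$. For such a family every $\vec F\in\hf$ satisfies $F_i\cap\{x,y\}\neq\emptyset$. This is a strong structural condition, and we analyse it directly: splitting $\hf$ according to whether $F_i$ meets $x$, $y$, or both gives a cross-intersecting pair of direct product families on the remaining ground sets. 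Otherwise we reach a shifted (left-compressed in every coordinate) non-trivial family. Since the problem asks for the exact extremal families, we must also track when equality is preserved under shifts. The triangle family $\hht(\vec n,\vec k,t)$ with $k_t=3$ arises exactly as a non-shifted equality case of the $\{x,y\}$-covering configuration.

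\textbf{Step 2: the shifted case.} For a shifted non-trivial intersecting $\hf$, pick a coordinate $t$ in which $1$ is ``most central''. Write $\hf=\hf(1\in F_t)\cup\hf(1\notin F_t)$. Let $\ha$ be the family obtained by deleting $1$ from $F_t$ in the first part, and let $\hb$ be the second part. Then $\hb$ is nonempty, and $\ha$ and $\hb$ are cross-intersecting as families in $\binom{[2,n_t]}{k_t-1}\times\prod_{i\neq t}\binom{[n_i]}{k_i}$ versus $\binom{[2,n_t]}{k_t}\times\prod_{i\neq t}\binom{[n_i]}{k_i}$. We then aim for a product-type Hilton--Milner inequality for cross-intersecting families: $|\ha|+|\hb|$ is maximised when $\hb$ consists of a single element, or of all elements fixing $F_s=[k_s]$ for $s\in S$. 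These two options correspond to $\hh\hm_1$ and $\hh\hm_2$ respectively. In the shifted situation, $\hb$ is generated by its minimal elements, which lie in small initial segments. We prove the bound by a Kruskal--Katona / Frankl-type cross-intersecting lemma coordinatewise, inducting on $p$, with Theorem \ref{EKR-directprod} and Theorem \ref{thm-hm} serving as base ingredients. The hypothesis $n_i>2k_i$ is used in every coordinate so that the cross-intersecting bound $|\ha|+|\hb|\le\binom{n-1}{k-1}-\binom{n-k-1}{k-1}+1$-type estimates hold coordinatewise, which is the tight threshold.

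\textbf{Step 3: optimisation and equality.} We show that the resulting bound is a maximum over choices of $(S,t)$ of the displayed expressions, i.e.\ exactly $\max\{M_1,M_2\}$. Then we trace the equality cases back through the shifts to obtain the isomorphism types, including the triangle family. Because the bound is a maximum over $(S,t)$ of two differently shaped functions, the induction has to carry the full maximum rather than a single closed form.

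\textbf{Main obstacle.} The main obstacle is Step 2's cross-intersecting inequality in the product setting with only $n_i>2k_i$. Without large $n$ one cannot use the delta-system/kernel arguments of \cite{KSV}. My first attempt would be to induct on $p$ by conditioning on the last coordinate: fix $F_p$, view the fibres as cross-intersecting families in $p-1$ coordinates, and apply the inductive bound fibrewise. Theorem \ref{EKR-directprod} would be used to control fibres whose partner fibres are empty.
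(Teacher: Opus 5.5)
\textbf{Verdict: there is a genuine gap.} It is the step you yourself flag as the main obstacle: the product-type Hilton--Milner inequality for the cross-intersecting pair $(\ha,\hb)$ in Step~2 is only announced, and the route you sketch does not deliver it.

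First, the inequality is false as you state it, with only cross-intersection and $\hb\neq\emptyset$ imposed. Take $p=2$, $n_1=n_2$, $k_1=k_2$, $t=1$. The star $\{\vec F\colon 1\in F_2\}$ gives such a pair with $|\ha|+|\hb|=\binom{n_1}{k_1}\binom{n_2-1}{k_2-1}>\max\{M_1,M_2\}$. So non-triviality in every other coordinate (or your ``most central'' choice of $t$) must be built into the inequality, and that is exactly what couples the coordinates.

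Second, conditioning on a coordinate $s\neq t$ does not produce $(p-1)$-coordinate instances of the same problem. Two members with the same $F_s=X$ already meet, so there is no constraint inside a fibre. The only constraints are that $\ha_X,\hb_Y$ (and $\hb_X,\hb_Y$) be cross-intersecting when $X\cap Y=\emptyset$. This is a weighted problem on the Kneser graph $KG(n_s,k_s)$. Averaging over its edges gives only $|\hf|\le\binom{n_s}{k_s}\max_{X\cap Y=\emptyset}(|\ha_X|+|\hb_Y|)$. Because $\hb\neq\emptyset$ is a global condition, individual fibres may be empty, the per-edge maximum is attained by degenerate pairs, and nothing of Hilton--Milner type survives. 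Theorem~\ref{EKR-directprod} applied to fibres can only return a star-type bound, not the deficit you need.

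Moreover, any argument treating coordinate $s$ as a passive parameter recovers only constructions with $s\notin S$. In $\hh\hm_1(\vec n,\vec k,S,t)$ and $\hh\hm_2(\vec n,\vec k,S,t)$ with $s\in S$, the fibres over $X=[k_s]$ and over $X\cap[k_s]=\emptyset$ differ, so the induction would have to carry the whole maximum over $(S,t)$ through that coupling, and nothing in the proposal does this. Your Step~1 treatment of the covering-pair case is likewise only announced; the paper simply imports Lemma~\ref{thm-key}.

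There is also a concrete error in your equality analysis. $\hht(\vec n,\vec k,t)$ is itself shifted: in coordinate $t$ it is generated by $\{1,2\},\{1,3\},\{2,3\}$. So it does not arise as a non-shifted case. In your decomposition its part $\hb=\{\vec F\colon 1\notin F_t,\ \{2,3\}\subset F_t\}$ has neither of the two shapes you list, so your intended list of maximisers in Step~2 is wrong when $k_t=3$.

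For comparison, the paper avoids any cross-intersecting inequality. It takes the minimal generating set $g(\hf)$ of a shifted extremal family and applies the exchange Lemma~\ref{lem-key4}. Generators containing $\ell_i$ in coordinate $i$ are paired by complementary sizes $a_i+b_i=\ell_i+1$. Discarding one class while deleting $\ell_i$ from the other strictly increases $|\hf|$ in one of the two directions, by a short computation using only $n_i>2k_i$, so by maximality that family must be a star. This forces the rigid form of $g(\hf)$ in Lemma~\ref{lem-4.1}. The local exchanges of Claims~\ref{claim-4.1} and~\ref{claim-4.2} then leave only $\hh\hm_1$, $\hh\hm_2$, and the triangle family; the last appears inside the shifted analysis as $\ell_t=k_t=3$, $\ell_j=0$ for $j\neq t$.
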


We note that recently Xi, Kong, and Ge \cite{XKG} considered the extremal problem of the sum of cardinalities of cross‑intersecting families for direct products.

{\bf Brief proof outline. }
Suppose that ${\hf} \subset \prod\limits\binom{[n_i]}{k_i}$ is a non-trivial intersecting family of maximum size.
We first show that there is a non-trivial intersecting family ${\hf} $ which is shifted.
Secondly, we connect this ${\hf}$ with its ``core'', which is called the generating set $g(\hf)$.
The generating set is a compact object that contains almost all the information about ${\hf}$ and has many critical properties.
We claim that $g(\hf)$ has a very good structure.
A proof by contradiction will be used to establish this result.
If its structure were otherwise, one could invariably construct a larger structure. This contradicts the assumption that $\hf$ is the maximal non‑trivial intersecting family, forcing it to be a star. A brief analysis of this scenario then yields a contradiction.
Lastly, based on the structure of the generating set, we provide a complete proof of the main theorem.

The rest of the paper will be organized as follows. In Section 2, we recall the shifting method and the generating set method, and adapt them into the direct product setting. In Section 3, we obtain the structures of shifted maximum intersecting families. In Section 4, we prove Theorem \ref{thm-1}.

\section{The shifting method and the generating set}

In this section, we recall two powerful tools in extremal set theory. One is the  shifting method and the other one is the generating set method.

The  shifting method was invented by Erd\H{o}s, Ko and Rado \cite{EKR}. We shall use a direct product adaptation of the shifting operation, which was given in \cite{KSV}. For ${\hf} \subset \prod\limits_{1\leq i\leq p} \binom{[n_i]}{k_i}$, $t\in [p]$ and $1\leq i<j\leq n_t$, define the  $(i,j)$-shift of the $t$th coordinate as
$$S_{i,j}^t(\hf)=\left\{S_{i,j}^t(\vec{F})\colon \vec{F}\in{\hf}\right\},$$
where
$$S_{i,j}^t(\vec{F})=\left\{
                \begin{array}{ll}
                 \vec{F}':=(F_1,\ldots,(F_t\setminus\{j\})\cup \{i\},\ldots,F_p), & \text { if }j\in F_t, i\notin F_t \text{ and } \vec{F}'\notin {\hf}; \\[5pt]
                  \vec{F}, & \hbox{otherwise.}
                \end{array}
              \right.
$$
 It is well-known (cf. \cite{F87}) that $S_{i,j}^t$ preserves both the size $|{\hf}|$ and the intersecting property.

 For ${\hf}\subset \prod\limits_{1\leq i\leq p} \binom{[n_i]}{k_i}$, we call ${\hf}$ a {\it shifted family} if $S_{i,j}^t({\hf})={\hf}$ for all $1\leq t\leq p$ and all $i,j$ with $1\leq i<j\leq n_t$.

An intersecting family $\hf\subset \prod\limits_{1\leq i\leq p}\binom{[n_i]}{k_i}$ is called {\it maximum} if any addition of an
 extra set of $\prod\limits_{1\leq i\leq p}\binom{[n_i]}{k_i}$ to $\hf$ will destroy the intersecting property. Define a partial order $\preceq$ on $\prod\limits_{1\leq i\leq p} 2^{[n_i]}$ as follows. For $\vec{E}, \vec{E}' \in \prod\limits_{1\leq i\leq p} 2^{[n_i]}$,  define $\vec{E}\preceq \vec{E}'$ if  $E_i\subset E_i'$ for all $i=1,2,\ldots,p$. If $\vec{E}\preceq \vec{E}'$ and $\vec{E}\neq  \vec{E}'$ then we write $\vec{E}\prec \vec{E}'$.

 We need the following lemma, which is contained in \cite{KSV}. 

\begin{lem}\label{thm-key}
 Let $p\geq 1$, $n_1,\ldots,n_p, k_1,\ldots,k_p\geq 1$ be integers. If ${\hf}\subset \prod\limits_{1\leq i\leq p} \binom{[n_i]}{k_i}$ is a non-trivial intersecting family of the maximum size, then
 one can arrive at a shifted non-trivial intersecting family ${\hf}'\subset \prod\limits_{1\leq i\leq p} \binom{[n_i]}{k_i}$ by applying the shifting operation $S_{i,j}^t$ to $\hf$ for all $1\leq t\leq p$ and all $1\leq i<j\leq n_t$ repeatedly.
\end{lem}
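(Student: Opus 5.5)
The plan is to shift one operation at a time and to stop as soon as a shift would destroy non-triviality. Start from a maximum-size non-trivial intersecting family $\hf$. Apply the shifts $S_{i,j}^t$ one by one, over all $t$ and all $i<j$, in any order. Each shift preserves the size and the intersecting property. The only danger is that some shift turns a non-trivial family into a star of some coordinate. So we must show that, for a maximum non-trivial intersecting family, no single shift $S_{i,j}^t$ can create a star. Given this, the process never leaves the class of maximum-size non-trivial intersecting families. The process also terminates, because the potential $\sum_{\vec F\in\hf}\sum_{t}\sum_{x\in F_t}x$ strictly decreases whenever a shift changes the family. The terminal family is then invariant under every $S_{i,j}^t$, so it is shifted.

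The key step is the following. Suppose $\hg$ is non-trivial intersecting of maximum size, and $S_{i,j}^t(\hg)$ is a star of coordinate $s$ with centre $x$. Since a shift of coordinate $t$ does not change the $s$th coordinate when $s\neq t$, stars in other coordinates are unaffected. Hence $s=t$. Moreover $x=i$: if $x\neq i$ then every changed set already contained $x$, so $\hg$ itself would be a star at $x$, a contradiction. So every member of $S_{i,j}^t(\hg)$ has $i\in F_t$. Consequently every $\vec F\in\hg$ satisfies $F_t\cap\{i,j\}\neq\emptyset$, and some member avoids $i$ (non-triviality), so it contains $j$.

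To get a contradiction I would use maximality. The family
\[
\hg^*=\left\{\vec F\in\prod\binom{[n_\ell]}{k_\ell}\colon i\in F_t\right\}
\]
is intersecting of size $\frac{k_t}{n_t}\prod\binom{n_\ell}{k_\ell}$. It is a star, though, so it cannot be compared with $\hg$ directly. Instead I would consider the family $\hg'=\hg\cup\{\vec F\colon \{i,j\}\subset F_t\}$. It is intersecting, since every member of $\hg$ meets $\{i,j\}$ in coordinate $t$. It is non-trivial, since it contains $\hg$. By maximality, $\hg$ already contains all such sets. Then I would argue that one can swap $i$ and $j$ in a suitable member to find a larger non-trivial intersecting family. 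Alternatively, following \cite{KSV}, one can simply order the shifts so that the problem is avoided: if $S_{i,j}^t$ would create a star, skip it, and afterwards apply the remaining shifts with respect to a relabeling.

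This step, ruling out or sidestepping the star-creating shift, is the main obstacle. The first thing I would try is the standard trick from \cite{KSV}. Their argument notes that when the shifted family is a star at $i$, the original family is contained in $\{\vec F\colon F_t\cap\{i,j\}\neq\emptyset\}$. Since this is a "two-element cover" situation, one checks that a family of this type is strictly smaller than a Hilton--Milner-type construction $\hh\hm_1$ or $\hh\hm_2$ when $n_t>2k_t$. That would contradict maximality.
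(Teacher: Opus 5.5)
The paper does not prove this lemma; it quotes it from \cite{KSV}. Your reduction is sound: shifts preserve size and the intersecting property, and the potential $\sum_{\vec F}\sum_t\sum_{x\in F_t}x$ guarantees termination. Also, if $S^t_{i,j}(\hg)$ is trivial then it is a star of coordinate $t$ with centre $i$, every $\vec F\in\hg$ meets $\{i,j\}$ in coordinate $t$, and by maximality $\hg$ contains every $\vec F$ with $\{i,j\}\subseteq F_t$. But the step that carries the lemma, namely that this situation cannot arise for a maximum family, is never proved, and none of your three suggested endings works.
\begin{itemize}
\item ``Swap $i$ and $j$ in a suitable member'' is not an argument.
\item Skipping the offending shift does not by itself produce a family invariant under every $S^t_{i,j}$. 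Invariance under every shift is what the lemma asserts and what Lemma~\ref{lem-key}(i) later uses. You give no reason why some order of shifts or some relabeling would achieve it.
\item A two-element cover alone does not force $|\hg|$ below the maximum. For $p=1$, the Hilton--Milner family, and for $k=3$ also the triangle family, are maximum non-trivial intersecting families covered by $\{1,2\}$. Likewise $\hh\hm_1(\vec n,\vec k,\emptyset,t)$ and $\hht(\vec n,\vec k,t)$ are covered by $\{1,2\}$ in coordinate $t$. So ``a family of this type is strictly smaller than an $\hh\hm$-type construction'' is false as stated.
\end{itemize}

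The missing idea is to use what separates the bad case from these examples. Let $\mathcal A$ be obtained from $\{\vec F\in\hg\colon i\in F_t,\ j\notin F_t\}$ by deleting $i$ from the $t$th coordinate, and let $\mathcal B$ be obtained from $\{\vec F\in\hg\colon j\in F_t,\ i\notin F_t\}$ by deleting $j$. These are cross-intersecting, and $S^t_{i,j}(\hg)$ is a star exactly when both are nonempty and $\mathcal A\cap\mathcal B=\emptyset$. In the examples above, $\mathcal A\cap\mathcal B\neq\emptyset$.

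One way to exploit this is to apply only shifts that keep the family non-trivial and examine a terminal family. Suppose $k_t\ge 2$. Take any shift not involving $i$ or $j$ in coordinate $t$. It fixes the complete part $\{\vec F\colon\{i,j\}\subseteq F_t\}$ and acts separately on $\mathcal A$ and $\mathcal B$, keeping both nonempty. So the image is still non-trivial, and at the terminal stage this shift must act trivially. Hence $\mathcal A$ and $\mathcal B$ are shifted families in $\binom{[n_t]\setminus\{i,j\}}{k_t-1}\times\prod_{\ell\neq t}\binom{[n_\ell]}{k_\ell}$. Being nonempty, both contain its initial element, which contradicts $\mathcal A\cap\mathcal B=\emptyset$.

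The case $k_t=1$ still needs its own argument. There the complete part is empty, and such shifts can create stars in other coordinates.
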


We now recall another powerful tool, known as the generating set method (or the AK-method), introduced by Ahlswede and  Khachatrian \cite{AK1996,AK}. Here we adapt it to the direct product setting.


For $\vec{E}=(E_1,E_2,\ldots, E_p)\in  \prod\limits_{1\leq i\leq p} 2^{[n_i]}$, define the {\it up-set} $\hu(\vec{E})$ as
\[
\hu(\vec{E})=\left\{\vec F \in \prod\limits_{1\leq i\leq p} 2^{[n_i]}\colon E_i\subset F_i\subset [n_i], \ i=1,2,\ldots,p\right\}.
\]
For ${\he}\subset \prod\limits_{1\leq i\leq p} 2^{[n_i]}$, define
$
\hu(\he)=\bigcup_{\vec{E}\in {\he}} \hu(\vec{E}).
$
Let ${\hf}\subset \prod\limits_{1\leq i\leq p} \binom{[n_i]}{k_i}$. A family $\hg\subset \prod\limits_{1\leq i\leq p} 2^{[n_i]}$ is called a  {\it generating set} of ${\hf}$ if
\[
\hu(\hg) \bigcap \prod\limits_{1\leq i\leq p} \binom{[n_i]}{k_i} ={\hf}.
\]

 It is clear that $\mathcal{F}$ is a generating set of itself. The set of all generating sets of $\mathcal{F}$ forms a nonempty set which we denote by $G(\mathcal{F})$.
 For $g(\mathcal F)\in G(\mathcal F)$, let $g_*(\mathcal F)$ be the set of all minimal (in the sense of
set-theoretical inclusion) elements of $g(\mathcal F)$. Set \[G_*(\mathcal F)=\{g(\mathcal F)\in G(\mathcal F): g(\mathcal F)=g_*(\mathcal F)\}.\]

For $\ha\subset g(\hf)$ and $1\leq i\leq p$, define {\it the projection} of $\ha$ at the $i$th coordinate as
\[
\pi_i(\ha) = \left\{A_i\colon \vec A\in \ha\right\}.
\]
For $E\subset [n]$, let $s^+(E)=\max E$ and set $s^+(\emptyset)=0$. For $\he\subset 2^{[n]}$, let $s^+(\he) = \max\{s^+(E)\colon E\in \he\}$. Let $\ell_i= s^+(\pi_i(g(\hf)))$, $i=1,2,\ldots,p$ and define $s^+(g(\hf)):=(\ell_1,\ell_2,\ldots,\ell_p)$. Set
\[
g_i^{*}(\hf)=\left\{\vec E\in g(\hf)\colon \ell_i\in E_i\right\}
\: {\rm and}\:
g_{i,a_i}^{*}(\hf)=\left\{\vec E \in g(\hf)\colon \ell_i\in E_i,\ |E_i|=a_i\right\}.
\]
For $\vec E\in \he \subset g_{i}^{*}(\hf)$, denote
\[\vec E_{(i)}'=(E_1,\cdots,E_{i-1},E_i\setminus \{\ell_i\},E_{i+1},\cdots,E_p) \: {\rm and } \:
\he_{(i)}' = \{ \vec E_{(i)}': \vec E\in \he\}.\]
For $\vec{E}\in g(\hf)$ and $\vec E_{(i)}'\in \he_{(i)}'$, define
 \[
  \hhd(\vec{E}) := \left\{\vec{F}\in \prod_{1\leq i\leq p} \binom{[n_i]}{k_i}\colon F_i\cap [s^+(E_i)] =E_i,\ i=1,2,\ldots,p\right\}
  \]
  and
  \[
  \hd(\vec E_{(i)}') := \left\{\vec{F}\in \prod_{1\leq i\leq p} \binom{[n_i]}{k_i}\colon F_i\cap [s^+(E_i)] =E_i \setminus \{\ell_i\}, F_j\cap [s^+(E_j)] =E_j, j\neq i \right\},
  \]
  where $[0]=\emptyset$.
 Clearly,
 \[
 |\hhd(\vec{E})| = \prod_{1\leq i\leq p} \binom{n_i-s^+(E_i)}{k_i-|E_i|}.
 \]
 For $\vec{E}\in g_{i,a_i}^{*}(\hf)$, we have
 \[
 |\hd(\vec E_{(i)}'| = \binom{n_i-\ell_i}{k_i-a_i+1}\prod_{j\in [p]\setminus \{i\}} \binom{n_j-s^+(E_j)}{k_j-|E_j|}=\frac{n_i-\ell_i-k_i+a_i}{k_i-a_i+1} |\hhd(\vec{E})|.
 \]
Denote
\[
\hhd(\ha) =\bigcup_{\vec{A}\in \ha} \hhd(\vec{A}) \:{\rm and} \: \hd(\ha) =\bigcup_{\vec{A}\in \ha} \hd(\vec{A}).
\]

To gain some intuitive understanding of these definitions, let's consider a simple example of \[ \hf=\left\{\vec F \in \binom{[n_1]}{2}\times \binom{[n_2]}{3}: 1\in F_1 \:{\rm and}\: F_2\cap \{1,2,3\} \neq \emptyset\right\}\cup \{(\emptyset,\{1,2,3\} )\}.\]

\begin{itemize}
  \item   $g(\hf)=\{(\{1\},\{1\}),\:(\{1\},\{2\}),\:(\{1\},\{3\}),\:(\emptyset,\{1,2,3\})\}$;
  \item $\ell_1=1$, $\ell_2=3$;
  \item  $\pi_1(g(\hf))=\{\{1\},\emptyset\}$, \:$\pi_2(g(\hf))=\{\{1\},\{2\}, \{3\},\{1,2,3\}\}$;
   \item  $g_{2}^{*}(\hf)=\{(\{1\},\{3\}),(\emptyset,\{1,2,3\})\}$, \:$g_{2,3}^{*}(\hf)=\{(\emptyset,\{1,2,3\})\}$;
    \item  $(\emptyset,\{1,2,3\})_{(2)}'=(\emptyset,\{1,2\})$;
    \item      $\hd((\emptyset,\{1,2,3\})_{(2)}')=\{ \vec F\in \binom{[n_1]}{2}\times \binom{[n_2]}{3}:  F_2\cap \{1,2,3\}=\{1,2\}\}$;
    \item  $\hhd((\emptyset,\{1,2,3\}))=\{ \vec F\in \binom{[n_1]}{2}\times \binom{[n_2]}{3}:  \{1,2,3\} \subset F_2\}$.
  \end{itemize}

It is clear that $g(\hf)$ is intersecting if $\hf $ is intersecting. Let us show  that the generating set has the following properties.

\begin{lem}\label{lem-key}
Suppose that $\hf\subset \prod\limits_{1\leq i\leq p}\binom{[n_i]}{k_i}$ is a non-trivial intersecting family of the maximum size and $n_i> 2k_i$, $i=1,2,\ldots,p$.  Let $g(\hf)\in G_*(\mathcal F)$ with  $s^+(g(\hf))=(\ell_1,\ell_2,\ldots,\ell_p)$ such that $\sum_{\vec F\in g(\hf)}|\vec F|$ is minimal. Then
\begin{itemize}
  \item[{\rm  (i)}] Suppose that $t\in [p]$, $1\leq i<j\leq n_t$ and $\vec{E}=(E_1,\ldots,E_t,\ldots,E_p)\in g(\hf)$ with $E_t\cap \{i,j\}=\{j\}$. Let $\vec{E}'=(E_1,\ldots,(E_t\setminus \{j\})\cup \{i\},\ldots,E_p)$. Then there exists $\vec{E}''\in g(\hf)$ such that $\vec{E}''\preceq \vec{E}'$.
  \item[{\rm (ii)}]  $\hf $ is a disjoint union $\bigcup_{\vec{A}\in g(\hf)} \hhd(\vec{A})=\hhd(g(\hf))$.
  \item[{\rm (iii)}]  For any $\vec{A}\in g_{i,a_i}^{*}(\hf)$, there  exists $\vec{B}\in g_{i,\ell_i+1-a_i}^{*}(\hf)$ such that $\vec{A}\cap \vec{B}=(\emptyset,\ldots,\emptyset, \{\ell_i\},$\\$\emptyset,\ldots,\emptyset)$. Moreover, if $\vec{A}\cap \vec{B}=(\emptyset,\ldots,\emptyset, \{\ell_i\},\emptyset,\ldots,\emptyset)$ then $A_i\cup B_i=[\ell_i]$.
\end{itemize}
\end{lem}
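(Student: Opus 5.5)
Throughout I take $\hf$ to be shifted; this is allowed by Lemma~\ref{thm-key}, and it is how the lemma is used later. Part~(i) will be proved first, since (ii) and (iii) rest on it.

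\textbf{Part (i).} I first show that every $\vec F\in\prod\binom{[n_i]}{k_i}$ with $\vec F\succeq\vec E'$ lies in $\hf$. If $j\in F_t$, then $\vec F\succeq\vec E$, so $\vec F\in\hf$. If $j\notin F_t$, let $\vec G$ be $\vec F$ with $i$ replaced by $j$ in coordinate $t$. Then $\vec G\succeq \vec E$, so $\vec G\in\hf$, and shiftedness under $S^t_{i,j}$ gives $\vec F\in\hf$. Hence $\hu(\vec E')\cap\prod\binom{[n_i]}{k_i}\subset\hf$, and $g(\hf)\cup\{\vec E'\}$ is again a generating set.

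Suppose no $\vec E''\in g(\hf)$ satisfies $\vec E''\preceq\vec E'$. I would then look for a generating set in $G_*(\hf)$ with strictly smaller $\sum|\vec F|$, contradicting the choice of $g(\hf)$. The natural candidate is obtained from $g(\hf)\cup\{\vec E'\}$ by replacing $\vec E$ with $\vec E\setminus\{j\}$, justified by the same shifting argument, and then passing to minimal elements. This is the step I expect to be the main obstacle. The point to check is that the sets covered only by $\vec E$ (those with $i\notin F_t$) are still covered. If the direct replacement fails, I would instead use maximality of $\hf$: show that $\vec E'$ is forced into $\hf$'s up-closure within $g$, so that minimality of $g$ yields the required $\vec E''$.

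\textbf{Part (ii).} Given $\vec F\in\hf$, choose $\vec A\in g(\hf)$ with $\vec A\preceq\vec F$, taking $\vec A$ lexicographically smallest (coordinatewise, in the colex sense). If for some $s$ we had $F_s\cap[s^+(A_s)]\neq A_s$, there would be $x<y=s^+(\cdot)$ with $x\in F_s\setminus A_s$ and $y\in A_s$. Applying (i) to the pair $(x,y)$ gives $\vec A''\preceq \vec A'\preceq\vec F$ that is smaller, a contradiction; hence $\vec F\in\hhd(\vec A)$. For disjointness, suppose $\vec F\in\hhd(\vec A)\cap\hhd(\vec B)$. Then in every coordinate one of $A_s,B_s$ is an initial trace of the other, since both equal $F_s$ cut at a threshold. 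I would combine this with (i) and the antichain property of $g_*$ to conclude $\vec A=\vec B$.

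\textbf{Part (iii).} Let $\vec A\in g^*_{i,a_i}(\hf)$. By minimality $\vec A'_{(i)}\notin$ the up-closure, so some $\vec F\in\hd(\vec A'_{(i)})$ is not in $\hf$. Using $n_s>2k_s$, I choose this $\vec F$ so that its coordinates avoid $[\ell_s]$ outside $A_s$ as far as possible; by (i) and shiftedness, membership depends only on the trace on $[\ell_s]$. Maximality of $\hf$ gives $\vec G\in\hf$ with $\vec G\cap\vec F=\emptyset$. By (ii), $\vec G\in\hhd(\vec B)$ for some $\vec B\in g(\hf)$. Since $\vec G$ must meet every member of $\hhd(\vec A)$, including the one agreeing with $\vec F$ except for containing $\ell_i$, we get $\vec B\cap\vec A=\{\ell_i\}$ in coordinate $i$ and empty elsewhere, with $\ell_i\in B_i$.

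For the sizes, $A_i\cap B_i=\{\ell_i\}$ inside $[\ell_i]$ gives $|A_i|+|B_i|\le\ell_i+1$. Conversely, if some $x\in[\ell_i]$ were missing from $A_i\cup B_i$, shifting $\ell_i$ to $x$ in $B_i$ via (i) would yield $\vec B''\in g(\hf)$ disjoint from $\vec A$, contradicting that $g(\hf)$ is intersecting. Hence $A_i\cup B_i=[\ell_i]$ and $|B_i|=\ell_i+1-a_i$; this is also the ``moreover'' statement.
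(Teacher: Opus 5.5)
\textbf{Part (i) is not proved.} Your first step matches the paper's: every $\vec F\in\prod_i\binom{[n_i]}{k_i}$ with $\vec F\succeq\vec E'$ lies in $\hf$. The paper then simply asserts, ``combining the definition of $g(\hf)$'', that some $\vec E''\preceq\vec E'$ lies in $g(\hf)$. You correctly locate this as the real work, but you do not supply it, and your candidate fails.

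Deleting $j$ from the $t$th coordinate of $\vec E$ is not justified by shifting. The up-set of the smaller vector contains every $\vec F$ with $F_t\cap\{i,j\}=\emptyset$, and such $\vec F$ need not lie in $\hf$. For example, take $p=1$, $\hf=\hh\hm(n,k)$ with $k\ge 2$, $E=[2,k+1]$, $i=1$, $j=k+1$: then $[2,k]\cup\{k+2\}\notin\hf$. The concern you state is the reverse of the real one. Whatever $\vec E$ generates is automatically generated by a subvector; the danger is generating members outside $\hf$. Since (ii) and (iii) both invoke (i), this is a genuine gap.

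One way to close it is as follows. Let $\mathcal{K}$ be the set of $\vec D$ with $|D_s|\le k_s$ for all $s$ and $\hu(\vec D)\cap\prod_i\binom{[n_i]}{k_i}\subset\hf$, and let $\mathcal{K}_{\min}$ be its $\preceq$-minimal members.
\begin{itemize}
\item Sum-minimality forces $g(\hf)\subset\mathcal{K}_{\min}$, since a member of $g(\hf)$ lying strictly above a member of $\mathcal{K}$ could be replaced by it.
\item Conversely, take $\vec D\in\mathcal{K}_{\min}$ and let $X_s$ be the $k_s-|D_s|$ largest elements of $[n_s]\setminus D_s$. Suppose another $\vec D'\in\mathcal{K}_{\min}$ had $\vec D'\preceq\vec D\cup\vec X$, and pick $x\in D_s\setminus D'_s$.
\item Minimality of $\vec D$ and maximality of $\hf$ give $\vec G\in\hf$ whose intersection with $\vec D$ is $\{x\}$ in coordinate $s$ and empty elsewhere.
\item Shift the elements of $G_r\cap X_r$ to distinct elements of $[n_r]\setminus(G_r\cup D_r\cup X_r)$. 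These are smaller than every element of $X_r$, and there are enough of them because $n_r\ge 2k_r$. The result is a member of $\hf$ disjoint from $\vec D'$.
\item That is impossible, because $\vec D'$ extends (again using $n_r\ge 2k_r$) to a member of $\hf$ avoiding it.
\end{itemize}
So $\vec D\cup\vec X$ is generated by $\vec D$ alone, hence $g(\hf)=\mathcal{K}_{\min}$. Then (i) follows from your observation that $\mathcal{K}$ is closed under $S^t_{i,j}$.

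\textbf{Part (ii): the union half is fine, the disjointness half is false.} Your colex-minimal choice of $\vec A\preceq\vec F$ does prove $\hf=\hhd(g(\hf))$ from (i). It also supplies the choice of $\vec A$ that the paper's one-line argument leaves implicit: not every $\vec A\preceq\vec F$ satisfies $F_i\cap[s^+(A_i)]=A_i$.

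Disjointness cannot be completed as you plan, because it is false for $p\ge 2$. The antichain property allows $A_1\subsetneq B_1$ together with $B_2\subsetneq A_2$. Concretely, take $p=2$, $(k_1,k_2)=(1,2)$, $n_1=3$, $n_2\ge 7$, and
$\hf=\{(\{1\},F_2)\colon F_2\cap[2]\neq\emptyset\}\cup\{(\{a\},[2])\colon a\in[3]\}=\hh\hm_2(\vec n,\vec k,\{2\},1)$.
This family is shifted, and a short case check on the three fibres shows it is non-trivial intersecting of maximum size $2n_2-1$. Its unique sum-minimal generating set is $\{(\{1\},\{1\}),(\{1\},\{2\}),(\emptyset,[2])\}$. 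The vector $(\{1\},[2])$ lies in both $\hhd((\{1\},\{1\}))$ and $\hhd((\emptyset,[2]))$, and the three $\hhd$'s have total size $2n_2>|\hf|$. The paper's justification (``since $g(\hf)$ is an anti-chain'') has the same defect, so only the covering half of (ii) is provable.

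\textbf{Part (iii) is essentially the paper's argument.} The paper phrases the first half as: if $\vec A_{(i)}'$ met every member of $g(\hf)$, substituting it for $\vec A$ would contradict sum-minimality. Your version is fine once (i) is available, with two remarks. First, $\ell_i\in B_i$ follows from $g(\hf)$ being intersecting together with $\vec B\cap\vec A_{(i)}'=\emptyset$, not from $\ell_i\in G_i$. Second, the careful choice of the trace of $\vec F$ is unnecessary.
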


\begin{proof}
First we prove (i). Since $\hhd(\vec{E})\subset \hf$ and $\hf$ is shifted, we infer that $\hhd(\vec{E}')\subset \hf$.
Combining the definition of $g(\hf)$,  it follows that either $\vec{E}'\in g(\hf)$ or there exists $\vec{E}''\in g(\hf)$ such that $\vec{E}''\prec \vec{E}'$. Thus, (i) holds.

Next we show (ii). Since $\hf$ is miximum, we infer that $\hhd(\vec{A})\subset \hf$ for all $\vec{A}\in g(\hf)$. It follows that $\hhd(g(\hf))\subset \hf$. On the other direction, choose an arbitrary $\vec{F}=(F_1,F_2,\ldots,F_p)\in \hf$.
Then there exists $\vec{A}\in g(\hf)$ such that $\vec F \in \hu(\vec A)$, which implies that $F_i\cap [s^+(A_i)] =A_i$ holds for each $i\in [p]$.
Therefore, $\vec F \in \hhd(\vec A)$ and hence $\hf \subset \hhd(g(\hf))$.
It follows that $\hf = \hhd(g(\hf))$.
For any different $\vec F, \vec G \in g(\hf)$,  $\hhd(g(\hf)) \cap \hhd(g(\hg))=\emptyset$ since $g(\hf)\in G_*(\mathcal F)$, that is, $g(\hf)$ is an anti-chain.
Thus $\hf $ is a disjoint union $\bigcup_{\vec{A}\in g(\hf)} \hhd(\vec{A})$.

We are left to show (iii). Let $\vec{A}=(A_1,\ldots,A_i,\ldots,A_p)\in g_{i,a_i}^{*}(\hf)$ and let $\vec{A}'=(A_1,\ldots,A_i\setminus \{\ell_i\},\ldots,A_p)$.
Suppose that $\vec A' \cap \vec C\neq \emptyset$ for all $\vec C \in g(\hf)$, then $\hhd(g')$ is also intersecting with $|\hhd(g')|\ge |\hf|$, where $g'=g(\hf)\cup \{\vec A'\}\setminus \{\vec A\}$.
This contradicts that $\sum_{\vec F\in g(\hf)}|\vec F|$ is minimal.
Hence, there exists some $\vec B\in g(\hf)$ such that $\vec{A}\cap \vec{B}=(\emptyset, \ldots,\emptyset,\{\ell_i\},\emptyset,\ldots,\emptyset)$, and $\vec{B}\in g_i^*(\hf)$ follows.
Suppose that  $A_i\cup B_i\subsetneq [\ell_i]$. Let $x\in [\ell_i]\setminus (A_i\cup B_i)$. By (i) there exists $\vec{B}'\in g(\hf)$ such that $\vec{B}'\preceq (B_1,\ldots,(B_i\setminus \{\ell_i\})\cup \{x\},\ldots,B_p)$.
It follows that $|\vec{A}\cap \vec{B}'|=0$, contradicting the fact that $g(\hf)$ is intersecting. Thus $A_i\cup B_i=[\ell_i]$ and $\vec{B}\in g_{i,\ell_i+1-a_i}^*(\hf)$. Therefore (iii) holds.
\end{proof}

{\noindent\bf Remark.} Note that for any $i$ there exists some $a_i$ such that $g^*_{i,a_i}(\hf)\neq \emptyset$.  By Lemma \ref{lem-key} (iii) if $g^*_{i,a_i}(\hf)\neq \emptyset$ then $g^*_{i,\ell_i+1-a_i}(\hf)\neq \emptyset$.
Since $a_i\leq k_i$ and $\ell_i+1-a_i\leq k_i$,  we infer $\ell_i<2k_i$ for all $i=1,2,\ldots,p$.

The following fact shows that the generating set of a direct product family has  different properties from that of an ordinary family.

\begin{lem}\label{lem-key4}
Let $n_i> 2k_i$, $i=1,2,\ldots,p$. Suppose that $\hf\subset \prod\limits_{1\leq i\leq p}\binom{[n_i]}{k_i}$ is a non-trivial intersecting family of the maximum size.
Let $g(\hf)\in G_*(\mathcal F)$ with  $s^+(g(\hf))=(\ell_1,\ell_2,\ldots,\ell_p)$ such that $\sum_{\vec F\in g(\hf)}|\vec F|$ is minimal.
Let $a_i,b_i\leq k_i$ be positive integers with $a_i+b_i=\ell_i+1$. Let  $\ha\subset g^*_{i,a_i}(\hf)$, $\hb\subset g^*_{i,b_i}(\hf)$, $\hf_1=\hhd(g(\hf)\setminus \hb)\cup \hd(\ha_{(i)}')$ and  $\hf_2=\hhd(g(\hf)\setminus \ha)\cup \hd(\hb_{(i)}')$. If $\ha$ and $ \hb$ satisfy the following three conditions,
\begin{itemize}
  \item[{\rm(C1)}] $\ha\neq \emptyset$, $\hb\neq \emptyset$ and $\ha \cap \hb =\emptyset$,
  \item[{\rm(C2)}] for $\vec A\in g^*_{i,a_i}(\hf)$, if $\vec A\cap \vec B=(\emptyset,\cdots,\emptyset,\{\ell_i\},\emptyset,\cdots,\emptyset)$  for some $\vec B \in \hb $ then $\vec A\in \ha$,
  \item[{\rm(C3)}] for $\vec B\in g^*_{i,b_i}(\hf)$, if $\vec B\cap \vec A=(\emptyset,\cdots,\emptyset,\{\ell_i\},\emptyset,\cdots,\emptyset)$  for some $\vec A \in \ha $ then $\vec B\in \hb$,
 \end{itemize}
 then both $\hf_1$ and $\hf_2$ are intersecting with $\max\{|\hf_1|,|\hf_2|\}>|\hf|$.
 In particular, if $\hf$ is non-trivial intersecting of maximal size, then one of $\hf_1$ and $\hf_2$ is a star.

\end{lem}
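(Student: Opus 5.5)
The proof has three steps: show $\hf_1$ and $\hf_2$ are intersecting, compare their sizes with $|\hf|$ via the disjoint-union structure of Lemma \ref{lem-key}(ii), and then read off the "in particular" clause from the maximality of $\hf$.

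\textbf{Intersecting.} I treat $\hf_1$; $\hf_2$ is symmetric, with the roles of $(\ha,a_i)$ and $(\hb,b_i)$ exchanged. Since $\hhd(g(\hf)\setminus\hb)\subset\hf$, it suffices to consider pairs in which at least one member lies in $\hd(\ha_{(i)}')$.

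Take $\vec F\in\hd(\vec A_{(i)}')$ with $\vec A\in\ha$, and $\vec G\in\hhd(\vec C)$ with $\vec C\in g(\hf)\setminus\hb$. If $\vec A_{(i)}'\cap\vec C\neq\emptyset$, we are done, since $\vec F\succeq\vec A'_{(i)}$ and $\vec G\succeq\vec C$. Otherwise $\vec A\cap\vec C=(\emptyset,\ldots,\{\ell_i\},\ldots,\emptyset)$, because $g(\hf)$ is intersecting. By Lemma \ref{lem-key}(iii), $A_i\cup C_i=[\ell_i]$, so $|C_i|=\ell_i+1-a_i=b_i$ and $\vec C\in g^*_{i,b_i}(\hf)$. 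Condition (C3) then forces $\vec C\in\hb$, a contradiction.

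Now take two members $\vec F\in\hd(\vec A'_{(i)})$ and $\vec F'\in\hd(\vec A''_{(i)})$ with $\vec A,\vec A''\in\ha$. Suppose $\vec A\cap\vec A''$ were exactly $\{\ell_i\}$ in coordinate $i$ and empty elsewhere. Lemma \ref{lem-key}(iii) would give $A_i\cup A''_i=[\ell_i]$, hence $2a_i=\ell_i+1$, i.e.\ $a_i=b_i$. Then $\vec A''\in g^*_{i,b_i}(\hf)$, and (C3) puts $\vec A''\in\hb$, contradicting $\ha\cap\hb=\emptyset$ from (C1). So $\vec A'_{(i)}\cap\vec A''_{(i)}\ne\emptyset$, and the two members intersect. (If instead $\vec A\cap\vec A''$ meets another coordinate, removing $\ell_i$ does not matter.)

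\textbf{Sizes.} By Lemma \ref{lem-key}(ii), $\hf$ is the disjoint union of the sets $\hhd(\vec A)$. The sets $\hd(\vec A'_{(i)})$ are disjoint from $\hhd(g(\hf))$: a member $\vec F$ avoids $\ell_i$ in $F_i$ while agreeing with $A_i\setminus\{\ell_i\}$ on $[\ell_i]$, and a minimality or shiftedness argument shows it is not generated by any other element. Granting this, and writing $\hhd$-sums via $|\hd(\vec E'_{(i)})|=\frac{n_i-\ell_i-k_i+a_i}{k_i-a_i+1}|\hhd(\vec E)|$,
\[
|\hf_1|-|\hf|=\frac{n_i-\ell_i-k_i+a_i}{k_i-a_i+1}\,|\hhd(\ha)|-|\hhd(\hb)|,\qquad
|\hf_2|-|\hf|=\frac{n_i-\ell_i-k_i+b_i}{k_i-b_i+1}\,|\hhd(\hb)|-|\hhd(\ha)|.
\]
If both differences were $\le 0$, multiplying the two inequalities would give
\[
\frac{(n_i-\ell_i-k_i+a_i)(n_i-\ell_i-k_i+b_i)}{(k_i-a_i+1)(k_i-b_i+1)}\le1.
\]
Since $n_i\ge 2k_i+1$ and $\ell_i=a_i+b_i-1$, we have $n_i-\ell_i-k_i+a_i\ge k_i-b_i+2$ and similarly $n_i-\ell_i-k_i+b_i\ge k_i-a_i+2$. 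The product on the left is therefore at least $(k_i-b_i+2)(k_i-a_i+2)$, which strictly exceeds the denominator, a contradiction. Hence $\max\{|\hf_1|,|\hf_2|\}>|\hf|$.

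\textbf{Main obstacle.} The delicate step is the disjointness, that is, checking that the new sets in $\hd(\ha'_{(i)})$ were not already in $\hf$. If such a set $\vec F$ were in $\hf$, it would lie in some $\hhd(\vec C)$. I expect to use the shiftedness of $\hf$ together with the minimality of $\sum|\vec F|$ (as in the proof of Lemma \ref{lem-key}(iii)) to show that $\vec C\preceq\vec A'_{(i)}$ up to the relevant prefix. That would contradict the antichain property, or imply that $\vec A'_{(i)}$ could have replaced $\vec A$.

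\textbf{The ``in particular'' clause.} Both $\hf_1$ and $\hf_2$ are intersecting, and one of them is strictly larger than $\hf$. Since $\hf$ has maximum size among non-trivial intersecting families, that larger family cannot be non-trivial, so it must be a star.
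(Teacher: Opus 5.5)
Your route is the paper's route: intersection via Lemma~\ref{lem-key}(iii) and (C3), then the size comparison through $|\hd(\vec E'_{(i)})|=\frac{n_i-\ell_i-k_i+a_i}{k_i-a_i+1}|\hhd(\vec E)|$ and multiplying the two inequalities. Your intersecting step is correct, and more complete than the paper's, since you also treat two members of $\hd(\ha'_{(i)})$.

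\textbf{The gap.} The step you grant is false: $\hd(\ha'_{(i)})$ is in general \emph{not} disjoint from $\hf$, so no shiftedness or minimality argument can prove it. The family $\hd(\vec A'_{(i)})$ fixes $F_j$ only on $[s^+(A_j)]$, which may be much shorter than $[\ell_j]$. So $F_j$ is free to contain the $j$th part of another generator, and shifting only moves elements downwards, so it cannot exclude this.

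\textbf{Counterexample.} Let $n_1=n_2=5$, $k_1=k_2=2$ and $\hf=\hh\hm_1(\vec n,\vec k,\{2\},1)$, which has maximum size $35$ by Theorem~\ref{thm-1}. Then
$g(\hf)=\{(\{1,2\},\emptyset),(\{1,3\},\emptyset),(\{1\},\{1\}),(\{1\},\{2\}),(\{2,3\},\{1,2\})\}$.
Take $i=2$, $a_2=1$, $b_2=2$, $\ha=\{(\{1\},\{2\})\}$ and $\hb=\{(\{2,3\},\{1,2\})\}$; conditions (C1)--(C3) hold. Now $\hd(\ha'_{(2)})=\{\vec F\colon 1\in F_1,\ F_2\cap[2]=\emptyset\}$ has $12$ members. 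Six of them, those with $F_1\in\{\{1,2\},\{1,3\}\}$, already lie in $\hf$. The true value is $|\hf_1|-|\hf|=40-35=5$, whereas your formula gives $12-1=11$.

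The same example breaks the ``disjoint'' part of Lemma~\ref{lem-key}(ii): $(\{1,2\},\{1,3\})\in\hhd((\{1\},\{1\}))\cap\hhd((\{1,2\},\emptyset))$. The paper's proof uses the same unproved identity, in the form $|\hd(\ha'_{(i)})\setminus\hf|=w_i(\ha)\binom{n_i-\ell_i}{k_i-a_i+1}$. So you inherit this gap rather than create it, but it is a real one.

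\textbf{A repair.} What holds, and suffices, is an inequality linking the gain of one family to the loss of the other.

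Work with traces $\vec T=(F_1\cap[\ell_1],\ldots,F_p\cap[\ell_p])$. Membership in the up-set of any family of generators inside $\prod_j 2^{[\ell_j]}$ depends only on the trace. The number of $\vec F$ with trace $\vec T$ is $\omega(\vec T)=\prod_j\binom{n_j-\ell_j}{k_j-|T_j|}$.

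Replace $\hf_1$ by $\tilde\hf_1=\hu\bigl((g(\hf)\setminus\hb)\cup\ha'_{(i)}\bigr)\cap\prod_j\binom{[n_j]}{k_j}$, and define $\tilde\hf_2$ symmetrically. $\tilde\hf_1$ contains $\hf_1$, is intersecting by your first step, and is the form used in Section 3.

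Suppose a trace $\vec T$ is lost in passing from $\hf$ to $\tilde\hf_2$. Then $\vec T\succeq\vec A$ for some $\vec A\in\ha$, and no member of $g(\hf)\setminus\ha$ lies below $\vec T$.
\begin{itemize}
\item Lemma~\ref{lem-key}(i) forces $T_i=A_i$: an $x\in T_i\setminus A_i$ would give a generator below $\vec T$ that misses $\ell_i$, hence lies outside $\ha$.
\item Deleting $\ell_i$ from $T_i$ gives a trace gained by $\tilde\hf_1$. Any generator below it misses $\ell_i$ and lies below $\vec T$, so none exists.
\item This map is injective and multiplies the weight by exactly $\rho_a=\frac{n_i-\ell_i-k_i+a_i}{k_i-a_i+1}$.
\end{itemize}
Hence $\mathrm{gain}_1\ge\rho_a\,\mathrm{loss}_2$, and symmetrically $\mathrm{gain}_2\ge\rho_b\,\mathrm{loss}_1$.

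If neither new family exceeded $\hf$, then $\rho_a\,\mathrm{loss}_2\le\mathrm{gain}_1\le\mathrm{loss}_1\le\rho_b^{-1}\mathrm{gain}_2\le\rho_b^{-1}\mathrm{loss}_2$.

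Moreover $\mathrm{loss}_2>0$. By minimality of $\sum|\vec F|$, some $\vec F\in\hf$ lies above a given $\vec A\in\ha$ and above no other generator. It then lies above no $\vec B'_{(i)}$ with $\vec B\in\hb$, since $\ell_i\in F_i$ would put it above $\vec B$.

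Your bound $\rho_a\rho_b>1$ now gives the contradiction.
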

\begin{proof}
By condition (C3),
we infer that $\vec C\cap \vec F\neq \emptyset$ for all $\vec C\in \ha_{(i)}'$ and $\vec F\in g(\hf)\setminus \hb$.
Consequently, $g(\hf)\cup \ha_{(i)}'\setminus \hb$ is intersecting, and hence $\hf_1=\hhd(g(\hf)\setminus \hb)\cup \hd(\ha_{(i)}')$  is intersecting.
Similarly, $\hf_2$ is intersecting.

For  $\vec E\in \he \subset g(\hf)$, let $w_{i}(\vec E) \binom{n_i-s^+(E_i)}{k_i-a_i} $ be the number of sets in $\hf$ such that $F_j\cap [s^+(E_j)]=E_j$ for all $j\in[p] $, that is,
\[w_{i}(\vec E)=|\{(F_1,\cdots,F_{i-1},E_i,F_{i+1},\cdots,F_p): \exists \vec{F}\in \hf  \:{\rm such \: that} \: F_j\cap [s^+(E_j)]=E_j, j\in[p] \}|.\]
Denote $w_{i}(\he)=\sum_{\vec E\in \he}w_i(\vec E)$.
One can derive that \[|\hhd(\ha)|=w_{i}(\ha) \binom{n_i-\ell_i}{k_i-a_i}\: {\rm and} \:|\hd(\ha_{(i)}')\setminus \hf|=  w_{i}(\ha)\binom{n_i-\ell_i}{k_i-a_i+1}.\]
Similarly, \[|\hhd(\hb)|=w_{i}(\hb) \binom{n_i-\ell_i}{k_i-b_i}\: {\rm and} \:|\hd(\hb_{(i)}')\setminus \hf|=  w_{i}(\hb)\binom{n_i-\ell_i}{k_i-b_i+1}.\]
Thus
\begin{align}\label{ineq-2.5}
|\hf_1|=  |\hf| -w_{i}(\hb)\binom{n_i-\ell_i}{k_i-b_i} +w_{i}(\ha)\binom{n_i-\ell_i}{k_i-a_i+1},
\end{align}
and
\begin{align}\label{ineq-2.6}
|\hf_2|=  |\hf| -w_{i}(\ha)\binom{n_i-\ell_i}{k_i-a_i} +w_{i}(\hb)\binom{n_i-\ell_i}{k_i-b_i+1}.
\end{align}
By condition (C1),  we obtain that $w_{i}(\ha)>0$ and $w_{i}(\hb)>0$.
If $\max\{|\hf_1|,|\hf_2|\}\leq |\hf|$, then by \eqref{ineq-2.5} and \eqref{ineq-2.6} we get
\[
 w_{i}(\ha) \binom{n_i-\ell_i}{k_i-a_i+1}\leq  w_{i}(\hb)\binom{n_i-\ell_i}{k_i-b_i},\  w_{i}(\hb) \binom{n_i-\ell_i}{k_i-b_i+1}\leq  w_{i}(\ha) \binom{n_i-\ell_i}{k_i-a_i}.
\]
Multiplying the two inequalities, we get
\[
\binom{n_i-\ell_i}{k_i-a_i+1}\binom{n_i-\ell_i}{k_i-b_i+1}\leq  \binom{n_i-\ell_i}{k_i-a_i}\binom{n_i-\ell_i}{k_i-b_i}.
\]
Equivalently,
\[
\frac{n_i-\ell_i-k_i+a_i}{k_i-a_i+1} \frac{n_i-\ell_i-k_i+b_i}{k_i-b_i+1} \leq 1.
\]
Since $n_i> 2k_i$ and $\ell_i+1=a_i+b_i$,
\[
\frac{n_i-\ell_i-k_i+a_i}{k_i-a_i+1} \frac{n_i-\ell_i-k_i+b_i}{k_i-b_i+1} > \frac{k_i-\ell_i+a_i}{k_i-a_i+1}\frac{k_i-\ell_i+b_i}{k_i-b_i+1}\geq \frac{k_i-b_i+1}{k_i-a_i+1}\frac{k_i-a_i+1}{k_i-b_i+1}=1,
\]
a contradiction. Thus $\max\{|\hf_1|,|\hf_2|\}>|\hf|$.
Consquencely, if $\hf$ is maximal, then one of $\hf_1$ and $\hf_2$ is not non-trivial, that is a star.
\end{proof}

Let $\ha=g^*_{i,a_i}(\hf)$, $\hb= g^*_{i,b_i}(\hf)$ with $b_i=\ell_i+1-a_i\neq a_i$ and $a_i\neq b_i$.
Suppose that $g_{i,a_i}^{*}(\hf)\neq \emptyset$.
Then by Lemma \ref{lem-key}, we know that $\ha$ and $\hb$ satisfy the three conditions in Lemma \ref{lem-key4}.
Thus, we obtain the following corollary immediately.
\begin{cor}\label{lem-key1}
Let $n_i> 2k_i$, $i=1,2,\ldots,p$.
Suppose that $\hf\subset \prod\limits_{1\leq i\leq p}\binom{[n_i]}{k_i}$ is a non-trivial intersecting family of the maximum size.
Let $g(\hf)\in G_*(\mathcal F)$ with  $s^+(g(\hf))=(\ell_1,\ell_2,\ldots,\ell_p)$ such that $\sum_{\vec F\in g(\hf)}|\vec F|$ is minimal.
If there exists $a_i\leq \ell_i$ such that  $g_{i,a_i}^{*}(\hf)\neq \emptyset$ and $b_i:=\ell_i+1-a_i\neq a_i$, then one of
\[
\hhd(g(\hf) \setminus g_{i,b_i}^{*}(\hf)) \cup \hd(g_{i,a_i}^{*}(\hf)_{(i)}')  \mbox{ and } \hhd(g(\hf) \setminus g_{i,a_i}^{*}(\hf)) \cup \hd(g_{i,b_i}^{*}(\hf)_{(i)}')
\]
is a star.
\end{cor}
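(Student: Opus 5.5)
The plan is to apply Lemma \ref{lem-key4} with $\ha=g^*_{i,a_i}(\hf)$ and $\hb=g^*_{i,b_i}(\hf)$, where $b_i=\ell_i+1-a_i$. Before doing so I will check that these two families satisfy (C1)--(C3) and that $a_i,b_i$ are positive integers at most $k_i$. Once this is done, the final sentence of Lemma \ref{lem-key4} shows directly that one of $\hf_1,\hf_2$ is a star. These are exactly the two families displayed in the corollary.

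First I check the parameters. Since $g^*_{i,a_i}(\hf)\neq\emptyset$, there is some $\vec A\in g(\hf)$ with $\ell_i\in A_i$ and $|A_i|=a_i$. Hence $a_i\ge 1$, and $a_i\le k_i$ because every member of $\hf$ contains $\vec A$ and has $i$th coordinate of size $k_i$ (I am assuming $\hhd(\vec A)\neq\emptyset$, which holds for generating sets of the maximal family). By Lemma \ref{lem-key}(iii), $\vec A$ has a partner $\vec B\in g^*_{i,\ell_i+1-a_i}(\hf)$. So $g^*_{i,b_i}(\hf)\neq\emptyset$, and by the same reasoning $1\le b_i\le k_i$. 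Together with $a_i+b_i=\ell_i+1$, all numerical hypotheses of Lemma \ref{lem-key4} are satisfied.

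Next I check (C1)--(C3). For (C1), both families are nonempty by the previous step. They are disjoint because $a_i\neq b_i$, and every element of $\ha$ has $|A_i|=a_i$ while every element of $\hb$ has $|B_i|=b_i$. Conditions (C2) and (C3) are automatic since $\ha$ and $\hb$ are the whole classes. Indeed, any $\vec A\in g^*_{i,a_i}(\hf)$ already lies in $\ha$, and any $\vec B\in g^*_{i,b_i}(\hf)$ already lies in $\hb$.

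The only point needing care is the bound $a_i,b_i\le k_i$, and it is immediate. With all hypotheses in place, Lemma \ref{lem-key4} applies and gives the conclusion.
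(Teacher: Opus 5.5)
Your proposal is correct and follows the paper's argument: take $\ha=g^*_{i,a_i}(\hf)$ and $\hb=g^*_{i,b_i}(\hf)$, get $\hb\neq\emptyset$ from Lemma \ref{lem-key}(iii), note that (C2)--(C3) hold trivially because $\ha$ and $\hb$ are the full classes, and invoke Lemma \ref{lem-key4}. Your extra check that $a_i,b_i\le k_i$ is a detail the paper leaves implicit. It is cleanly justified as follows: any $\vec A\in g(\hf)$ with $|A_j|>k_j$ for some $j$ generates no member of $\hf$, so it could be deleted, contradicting the minimality of $\sum|\vec F|$.
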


For $a_i=b_i$, we obtain the following corollary.
\begin{cor}\label{lem-key2}
Let $n_i> 2k_i$, $i=1,2,\ldots,p$.
Suppose that $\hf\subset \prod\limits_{1\leq i\leq p}\binom{[n_i]}{k_i}$ is a non-trivial intersecting family of the maximum size.
Let $g(\hf)\in G_*(\mathcal F)$ with  $s^+(g(\hf))=(\ell_1,\ell_2,\ldots,\ell_p)$ such that $\sum_{\vec F\in g(\hf)}|\vec F|$ is minimal, where $\ell_i\geq 2$. Let $a_i=\frac{\ell_i+1}{2}$.
If  $g_{i,a_i}^{*}(\hf)\neq \emptyset$, then there exists a partition $(\ha, \hb)$ of $g_{i,a_i}^{*}(\hf)$ such that one of
\[
\hhd(g(\hf)\setminus \hb)\cup \hd(\ha_{(i)}')  \mbox{ and }  \hhd(g(\hf)\setminus \ha)\cup \hd(\hb_{(i)}')
\]
is a star.
\end{cor}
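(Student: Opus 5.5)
We derive Corollary \ref{lem-key2} from Lemma \ref{lem-key4}, applied with $a_i=b_i=\frac{\ell_i+1}{2}$. The work is to find a partition $(\ha,\hb)$ of $g_{i,a_i}^{*}(\hf)$ that satisfies (C1)--(C3). Write $\hg:=g_{i,a_i}^{*}(\hf)$, assumed nonempty. Define an auxiliary graph $\Gamma$ on vertex set $\hg$ by joining $\vec A,\vec B\in\hg$ whenever $\vec A\cap\vec B=(\emptyset,\ldots,\emptyset,\{\ell_i\},\emptyset,\ldots,\emptyset)$.

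First, $\Gamma$ has no loops. Suppose $\vec A\cap\vec A$ were the singleton vector. Then $A_j=\emptyset$ for $j\neq i$ and $A_i=\{\ell_i\}$, so $a_i=1$ and $\ell_i=1$, which contradicts $\ell_i\geq 2$.

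Second, by Lemma \ref{lem-key}(iii), every $\vec A\in\hg$ has a partner $\vec B\in g_{i,\ell_i+1-a_i}^{*}(\hf)=\hg$ with this singleton intersection. Hence $\Gamma$ has no isolated vertices.

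Third, $\Gamma$ is bipartite. By the "moreover" part of Lemma \ref{lem-key}(iii), adjacency forces $A_i\cup B_i=[\ell_i]$ with $A_i\cap B_i=\{\ell_i\}$. So if $\vec A$ is adjacent to both $\vec B$ and $\vec C$, then $B_i=C_i=([\ell_i]\setminus A_i)\cup\{\ell_i\}$. Consequently, along any walk the $i$th coordinates alternate between exactly two distinct sets $X$ and $X^c\cup\{\ell_i\}$; they are distinct because $|X|=a_i\geq 2$ when $\ell_i\geq 3$. The case $\ell_i=2$ is excluded automatically, since $a_i$ would not be an integer. An odd cycle would have to return to its starting $i$th coordinate after an odd number of alternations, which is impossible. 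So $\Gamma$ is bipartite.

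Take a bipartition of $\Gamma$: inside each connected component, put the vertices whose $i$th coordinate equals the component's "first" set into $\ha$ and the rest into $\hb$. Both parts are nonempty, because any edge has one endpoint in each part, and they are disjoint, so (C1) holds. For (C2): if some $\vec A\in\hg$ has singleton intersection with some $\vec B\in\hb$, then $\vec A$ and $\vec B$ are adjacent, so $\vec A$ lies in the opposite class, namely $\ha$. (C3) follows symmetrically.

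Lemma \ref{lem-key4} then gives that $\hf_1$ and $\hf_2$ are intersecting and that one of them has size greater than $|\hf|$. By the maximality of $\hf$ among non-trivial families, that one must be a star, which is the claim.

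The main obstacle is the bipartiteness/consistency step, that is, making sure (C2) and (C3) hold simultaneously for a single partition. Lemma \ref{lem-key}(iii) settles it by determining the partner's $i$th coordinate uniquely, so the partition can be taken simply as $\ha=\{\vec A\in\hg\colon A_i\in\mathcal X\}$ for a suitable choice $\mathcal X$ of one set from each complementary pair.
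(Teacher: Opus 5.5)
Your proof is correct and is essentially the paper's argument. The paper fixes a splitting $\hp_i\cup\hq_i$ of $\binom{[\ell_i-1]}{a_i-1}$ into complementary halves and sorts $g^*_{i,a_i}(\hf)$ according to whether $A_i\setminus\{\ell_i\}$ lies in $\hp_i$ or $\hq_i$. It gets nonemptiness of both parts from Lemma~\ref{lem-key}(iii), and (C2)--(C3) from the ``moreover'' part, exactly as in your closing remark; your bipartite-graph framing is just a per-component version of the same choice.
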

\begin{proof}
Let us choose a partition $\hp_i\cup \hq_i$ of $\binom{[\ell_i-1]}{a_i-1}$ such that $[\ell_i-1]\setminus P \in \hq_i$ for all $P\in \hp_i$ and $[\ell_i-1]\setminus Q \in \hp_i$ for all $Q\in \hq_i$. Let
\[
\ha =\left\{(A_1,\ldots,A_i,\ldots, A_p)\in g_{i,a_i}^*(\hf)\colon A_i\setminus \{\ell_i\} \in \hp_i\right\}
 \]
and
\[
 \hb =\left\{(B_1,\ldots,B_i,\ldots, B_p)\in g_{i,a_i}^*(\hf)\colon B_i\setminus \{\ell_i\} \in \hq_i\right\}.
\]
Clearly, $\ha \cap \hb=\emptyset$.
Without loss of generally assume that $\ha \neq \emptyset$ since $\ha \cup \hb=g_{i,a_i}^{*}(\hf)\neq \emptyset$.
By (iii) of Lemma \ref{lem-key}, we know that $\hb \neq \emptyset$.
Thus, $\ha$ and $\hb$ satisfy the Condition (C1) in Lemma \ref{lem-key4},
and the Conditions (C2)-(C3) by the definitions of $\ha$ and $\hb$.
Hence by Lemma \ref{lem-key}, one of $\hhd(g(\hf)\setminus \hb)\cup \hd(\ha_{(i)}')$  and   $\hhd(g(\hf)\setminus \ha)\cup \hd(\hb_{(i)}')$ is a star.
\end{proof}

\section{Structures of shifted maximum intersecting families}

The following lemma determines all the possible generating sets for the shifted, non-trivial intersecting direct product families with the maximum size.

\begin{lem}\label{lem-4.1}
Let $n_i>2k_i$, $i=1,2,\ldots,p$.
Suppose that $\hf\subset \prod\limits_{1\leq i\leq p}\binom{[n_i]}{k_i}$ is a non-trivial intersecting family of the maximum size.
Let $g(\hf)\in G_*(\mathcal F)$ with  $s^+(g(\hf))=(\ell_1,\ell_2,\ldots,\ell_p)$ such that $\sum_{\vec F\in g(\hf)}|\vec F|$ is minimal.
Then $\ell_t\geq 1$ for some $t\in [p]$ and
  \[
g(\hf)=\left\{\vec{E}=(E_1,E_2,\ldots,E_p)\colon  1\in E_t, \ |\vec{E}|=2, \  |\vec{E}\cap \vec{E}^*|= 1\right\}\cup \{ \vec{E}^*\},
\]
where $E_t^*=[2,\ell_t]$, $E_j^*=[\ell_j]$ for  $j\neq t$.
\end{lem}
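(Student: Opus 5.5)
Assume throughout that $\hf$ is shifted (Lemma \ref{thm-key}), so Lemma \ref{lem-key} applies. The plan is to analyse the generating set $g(\hf)$ one coordinate at a time, using the maximum index $\ell_i$ in each coordinate. Corollaries \ref{lem-key1} and \ref{lem-key2} let us trade the blocks $g^*_{i,a_i}(\hf)$ and $g^*_{i,b_i}(\hf)$ against each other. Each such trade produces an intersecting family strictly larger than $\hf$, so by maximality that family must be a star. The idea is that this star condition pins down $g(\hf)$ almost completely.

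\textbf{Step 1.} First, not all $\ell_i$ vanish. Otherwise $g(\hf)=\{(\emptyset,\ldots,\emptyset)\}$, and then $\hf$ is the full product, which is not intersecting.

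\textbf{Step 2.} Fix a coordinate $i$ with $\ell_i\ge 1$ and some $a_i$ with $g^*_{i,a_i}(\hf)\ne\emptyset$. Set $b_i=\ell_i+1-a_i$.
\begin{itemize}
\item If $a_i\neq b_i$, Corollary \ref{lem-key1} says one of the two modified families is a star in some coordinate $t$, with centre $x$. By shiftedness we may take $x=1$.
\item If $a_i=b_i$, Corollary \ref{lem-key2} gives the same conclusion.
\end{itemize}
The modified family has generating set either $g(\hf)\setminus g^*_{i,b_i}(\hf)\cup g^*_{i,a_i}(\hf)'_{(i)}$ or its symmetric counterpart. So all generators of $g(\hf)$ except those in the removed block $\hb$ contain $1$ in coordinate $t$. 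Moreover, every truncated generator $\vec A'_{(i)}$ with $\vec A\in\ha$ also contains $1$ in coordinate $t$.

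\textbf{Step 3: the exceptional block has one element.} Every $\vec B\in\hb$ must intersect every generator containing $1$ in coordinate $t$. Using Lemma \ref{lem-key}(i) (shifting down inside the generating set), one shows that if $\vec B$ missed some position, we could shift it to a generator disjoint from some $\vec B'$, contradicting that $g(\hf)$ is intersecting. Hence $\hb$ consists of a single vector $\vec E^*$ with
\[
E^*_j=[\ell_j]\quad (j\ne t),\qquad E^*_t=[2,\ell_t].
\]
Also $1\notin E^*_t$, since $\hf$ is non-trivial.

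\textbf{Step 4: the remaining generators.} Each other generator $\vec E$ contains $1$ in coordinate $t$ and must meet $\vec E^*$. By minimality of $g(\hf)$, and because minimal elements form an antichain, the minimal such vectors are exactly those with
\[
1\in E_t,\qquad |\vec E|=2,\qquad |\vec E\cap\vec E^*|=1 .
\]
Conversely, any such vector must lie in $g(\hf)$. Otherwise we could add $\hhd(\vec E)$ to $\hf$, keeping it intersecting and non-trivial, and contradicting maximality. This gives the claimed description of $g(\hf)$.

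\textbf{Main obstacle.} The hardest part is Steps 3–4: showing the exceptional part is exactly one vector $\vec E^*$ that is a full initial segment in every coordinate, and handling the case where the star centre lies in a coordinate $t\ne i$. For this I would induct on $\sum_j\ell_j$. Each application of the corollaries either shrinks some $\ell_j$ or forces the star structure, and the pairing in Lemma \ref{lem-key}(iii) ensures that every block $g^*_{j,\cdot}$ other than the one containing $\vec E^*$ consists of vectors of total size $2$.
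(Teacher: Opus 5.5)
Your outline follows the paper's overall strategy: force a star via Lemma~\ref{lem-key4}, identify the exceptional generator $\vec E^*$, then conclude by maximality. Steps 1 and 4 are fine. However, there is a genuine gap, because the two load-bearing steps are not established.

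\textbf{Step 2 fails when $\ell_i=1$.} Then necessarily $a_i=b_i=1$, and Corollary~\ref{lem-key2} requires $\ell_i\ge 2$. Its partition pairs $A_i\setminus\{\ell_i\}$ with its complement in $[\ell_i-1]$, which degenerates when $[\ell_i-1]=\emptyset$. This is not a technicality: a partition $(\ha,\hb)$ satisfying (C1)--(C3) need not exist at all. Conditions (C2)--(C3) force $(\ha,\hb)$ to be a proper $2$-colouring of a union of components of the \emph{partner graph} from Lemma~\ref{lem-key}(iii). Now take the target structure with every $\ell_j\le 1$ and $i=t$. The generators with $E_t=E_j=\{1\}$ and all other coordinates empty pairwise meet only in $\{1\}$ at coordinate $t$. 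So the partner graph on $g^*_t(\hf)$ is complete, and as soon as there are three such $j$ it has no valid $2$-colouring. Hence, when all $\ell_j\le 1$, you have no star to start from. The paper treats this case separately. It encodes $g(\hf)$ as $\varphi(g(\hf))\subset 2^{[q]}$ and uses the weights $w_q$ with the ordering $n_1/k_1\le\cdots\le n_q/k_q$ to assume $\varphi(g(\hf))$ is shifted. It then works in the last coordinate $q$, where shiftedness forces $\varphi(\vec A)\cup\varphi(\vec B)=[q]$ for partners. The partners therefore form a perfect matching, which gives (C1)--(C3).

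\textbf{Step 3 does not follow from a single star.} A star in coordinate $t$ obtained from coordinate $i$ tells you that every generator outside $\hb$ contains $1$ in coordinate $t$. As in Claims~\ref{claim-3} and \ref{claim-4}, it also pins down $B_i$ (to $[\ell_i]$ if $t\ne i$, to $[2,\ell_i]$ if $t=i$). It says nothing about $B_j$ for the other coordinates. Your shifting argument does not close this. Lemma~\ref{lem-key}(i) only moves an element already present in $B_j$ downward, so it cannot supply a missing $x\in[\ell_j]$ when $B_j$ has nothing above $x$. Moreover, whatever it produces still avoids $1$ in coordinate $t$, so it lies in $\hb$ again, and nothing you state yields a generator disjoint from it.

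The paper needs three further ingredients here:
\begin{enumerate}
\item Claims~\ref{claim-3}--\ref{claim-4} for \emph{every} coordinate with $\ell_i\ge 2$, each with its own star coordinate $\sigma(i)$.
\item Claim~\ref{claim-5}, showing all $\sigma(i)$ coincide. It uses the forced size-two generators, which would otherwise have to lie in some $\hb\subset g_i^*(\hf)$ despite missing $\ell_i$.
\item Claim~\ref{claim-8}, a fresh application of Lemma~\ref{lem-key4} to suitable families $\hc,\hd$, which gives $E^*_j=\{1\}$ for the coordinates with $\ell_j=1$.
\end{enumerate}
Your suggested induction on $\sum_j\ell_j$ cannot replace these. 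The corollaries never decrease any $\ell_j$; they only produce a strictly larger intersecting family, which maximality forces to be a star. There is also no smaller maximum non-trivial family to which an induction hypothesis could be applied.
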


Note that if $\ell_t= 1$ then $E_t^*=\emptyset$ and if $\ell_j=0$ then $E_j^*=[0]=\emptyset$.
To gain some intuitive understanding of this lemma, let's consider a simple example.
For instance: if $t=1$, $p=3$ and $\ell_1=\ell_2=\ell_3=2$, then $E_1^*=\{[2,3], [2], [2]\}$ and  $g(\hf)$ is
\[
\{ (\{1,2\},\emptyset,\emptyset), \ (\{1,3\},\emptyset,\emptyset), \ (\{1\},\{1\},\emptyset),  \ (\{1\},\{2\},\emptyset), \ (\{1\},\emptyset,\{1\}), \ (\{1\},\emptyset,\{2\})\}\cup \{E_1^*\}.
\]

We divide the proof into two parts in the following two subsections.
 The first subsection deals with the case of $\ell_i\leq 1$ for all $i=1,2,\ldots,p$,
 while the next one  deals with the case of $\ell_i\ge 2$ for some $i=1,2,\ldots,p$.

\subsection{The case of $\ell_i\le 1$  for all $i=1,2,\ldots,p$}
In this subsection, we deal with the case $\ell_i=0$ or 1 for all $i\in [p]$.
 By switching parts of $\prod_{1\leq i\leq p}\binom{[n_i]}{k_i}$, we may assume that $\ell_1=\ell_2=\cdots=\ell_q=1$ and $\ell_{q+1}=\cdots=\ell_p=0$, and $\frac{n_1}{k_1} \leq \frac{n_2}{k_2} \leq \ldots \leq \frac{n_q}{k_q}$.

Let us define a  generating index set for $\hf$ as follows.
For $\vec{E}\in g(\hf)$,  we have $E_i=\{1\}$ or $\emptyset$ for each $i\in [q]$.
Let $\varphi: g(\hf) \rightarrow 2^{[q]}$ be a mapping with $\varphi(\vec{E}) =\left\{i\in [q] \colon E_i=\{1\}\right\}$.
We can regard $\varphi(\vec{E})$ as the index set of $\vec{E}$.
Denote
\[
\varphi(g(\hf)) =\{\varphi(\vec{E})\colon \vec{E}\in g(\hf)\}.
\]
Clearly, this mapping $\varphi$ is injective.
More importantly, $\varphi(g(\hf))$ contains almost all important information of $g(\hf)$ (so does $\hf$), such as, \\
(P1) $g(\hf)$ is intersecting if and only if $\varphi(g(\hf))$ is intersecting;\\
(P2) $\hf$ is non-trivial if and only if $\varphi(g(\hf))$ is non-trivial.

Let $N= \prod\limits_{1\leq i\leq p} \binom{n_i}{k_i}$. For $I\subset [q]$, define
\[
w_q(I) =\prod_{i\in I}\frac{k_i}{n_i}  \prod_{i\in [q]\setminus I}\left(1-\frac{k_i}{n_i}\right).
\]
Since $\frac{n_1}{k_1} \leq \frac{n_2}{k_2} \leq \ldots \leq \frac{n_q}{k_q}$, we deduce that $w_q(I)\le w_q(I')$ if $I' \preceq I$.
Moreover,
\begin{align*}
|\hf| &= \sum_{\vec{E}\in g(\hf)}\prod_{i\in I(\vec{E})}\binom{n_i-1}{k_i-1} \prod_{i\in [q]\setminus I(\vec{E})}\binom{n_i-1}{k_i}\\[3pt]
&= \sum_{I\in \varphi(g(\hf))}\prod_{i\in I}\binom{n_i-1}{k_i-1} \prod_{i\in [q]\setminus I}\binom{n_i-1}{k_i}\\[3pt]
&=N\sum_{I\in \varphi(g(\hf))}w_q(I).
\end{align*}
Thus by the maximality of $|\hf|$, we can assume that $\varphi(g(\hf))$ is shifted.
\begin{lem}\label{lem-key3}
If $\varphi(g(\hf))$ is shifted, then
\[
\varphi(g(\hf)) =\left\{\{1,j\}\colon 2\leq j\leq q\right\}\cup \{[2,q]\}.
\]
\end{lem}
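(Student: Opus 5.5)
We would prove Lemma \ref{lem-key3} by working entirely inside $2^{[q]}$. Write $\hg_0=\varphi(g(\hf))$. Since $\ell_j=0$ for $j>q$, every $\vec E\in g(\hf)$ has $E_j=\emptyset$ for $j>q$. Hence $\vec E$ is determined by $\varphi(\vec E)$, and $\hg_0$ inherits the following properties:
\begin{itemize}
\item it is an antichain, because $g(\hf)\in G_*(\hf)$;
\item it is intersecting and non-trivial, by (P1) and (P2);
\item it satisfies $\bigcup\hg_0=[q]$, because $\ell_i=1$ for every $i\le q$;
\item it is shifted, by assumption.
\end{itemize}
Moreover, $|\hf|=N\sum_{I\in\hg_0}w_q(I)$, and $w_q(I\setminus\{x\})=w_q(I)\cdot\frac{n_x-k_x}{k_x}>w_q(I)$ because $n_x>2k_x$. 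So shrinking a member or replacing members by smaller ones strictly increases the weight.

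\textbf{Step 1 (tightness).} We first transfer Lemma \ref{lem-key}(iii) to $\hg_0$: for every $G\in\hg_0$ and every $x\in G$ there is $H\in\hg_0$ with $G\cap H=\{x\}$. Otherwise replacing $G$ by $G\setminus\{x\}$ (and discarding supersets) keeps the family intersecting and does not decrease the weight. If it is still non-trivial, this contradicts the minimality of $\sum|\vec E|$ or the maximality of $|\hf|$. If it becomes a star, then $G\setminus\{x\}$ is the only set that could destroy non-triviality, which is exactly the situation we analyse in Step 2.

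\textbf{Step 2 (exactly one set avoids $1$).} By non-triviality there is $G_0\in\hg_0$ with $1\notin G_0$.

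First, every member containing $1$ has size $2$. Applying Step 1 to a member $H\ni 1$ with $x=1$ gives $G\in\hg_0$ with $G\cap H=\{1\}$. Shiftedness, together with the antichain property, then forces $H=\{1,j\}$.

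Second, if two sets $G_0,G_0'$ avoided $1$, then by shiftedness $(G_0\setminus\{y\})\cup\{1\}$ would be dominated by a member containing $1$, i.e.\ by some $\{1,j\}$. Combining this with the intersection constraints between $G_0'$ and all the pairs $\{1,j\}$, we aim to show that $G_0$ and $G_0'$ coincide.

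Consequently $\hg_0=\{\{1,j\}\colon j\in J\}\cup\{G_0\}$ for some $J\subset[2,q]$. Intersection forces $J\subset G_0$, and tightness (Step 1 applied to $G_0$) forces $G_0\subset J$, so $J=G_0$. Shiftedness makes $G_0$ an initial segment $[2,m+1]$, and $\bigcup\hg_0=[q]$ gives $m+1=q$. Hence $\hg_0=\{\{1,j\}\colon 2\le j\le q\}\cup\{[2,q]\}$.

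\textbf{Main obstacle.} The hard step is ruling out a second set avoiding $1$, and more generally ruling out the triangle-like or larger alternatives that tightness alone does not exclude. There Step 1 is not enough, and we would invoke the weight comparison of Lemma \ref{lem-key4} instead. Its proof only uses $\frac{n_i-\ell_i-k_i+a_i}{k_i-a_i+1}>1$, which still holds for $\ell_i=1$, $a_i=b_i=1$, so we would apply it with $\ha,\hb$ chosen as the sets through a fixed coordinate $i$ partitioned by membership of $1$. Lemma \ref{lem-key4} then shows that the two modified families cannot both fail to beat $|\hf|$ unless one of them is a star. Reading off when the modified family is a star should pin down the structure above.
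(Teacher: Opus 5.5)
There is a genuine gap. Step 2 is the entire content of the lemma, and your justification of the ``First'' claim is false. Consider
\[
\mathcal{K}=\{\{1,2\}\}\cup\left\{\{a\}\cup T\colon a\in\{1,2\},\ T\in\binom{[3,5]}{2}\right\}\subset 2^{[5]} .
\]
This is a shifted, intersecting, non-trivial antichain with $\bigcup\mathcal{K}=[5]$. It satisfies your Step 1 for every member and every element; for instance $\{1,3,4\}\cap\{1,2\}=\{1\}$, $\{1,3,4\}\cap\{2,3,5\}=\{3\}$ and $\{1,3,4\}\cap\{2,4,5\}=\{4\}$. Yet it contains the $3$-set $\{1,3,4\}$ through $1$, and three members avoiding $1$. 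So neither half of Step 2 follows from tightness, shiftedness and the antichain property. Both halves must come from maximality of the weight, and that is exactly what you defer to ``reading off when the modified family is a star should pin down the structure''. As a side remark, Step 1 is just Lemma \ref{lem-key}(iii), since $\ell_x=1$ for every $x\in[q]$. Your worry that the family might become a star there is vacuous: shrinking a generator only enlarges the up-set, and a subfamily of a star is a star.

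What is missing is an actual application of Lemma \ref{lem-key4} with (C1)--(C3) verified, followed by the analysis of which family is a star. The paper works at coordinate $q$, with $\ha,\hb$ defined by a partition of $2^{[q-1]}$ that separates complementary pairs. Shiftedness shows that tight partners through $q$ have union $[q]$, so each tight pair is split between $\ha$ and $\hb$ and the conditions hold. By symmetry the first modified family is the star, centred at $1$. Then every member avoiding $1$ lies in $\hb$, hence contains $q$, and shifting $q$ downwards forces it to be $[2,q]$; its tight partners are the sets $\{1,j\}$.

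Your partition by membership of $1$ can also be made to work at $i=q$, but you would have to do two things. First, prove (C2), which needs the observation that two members avoiding $1$ cannot meet exactly in $\{q\}$ (shift $q\to 1$ in one of them). Second, since your partition is not symmetric, handle the case where the second family $\hf_2$ is the star. Its centre $c$ then lies in $[2,q-1]$, and shifting shows the only member avoiding $c$ is $[q]\setminus\{c\}$. So $\varphi(g(\hf))$ is the Hilton--Milner family centred at $c$, which is shifted only when $q=3$. None of this appears in the proposal.
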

\begin{proof}
Let us fix a partition $(\hi_1,\hi_2)$ of $2^{[q-1]}$ such that $\{I, [q-1]\setminus I\} \cap \hi_i=1$ for each $I\in 2^{[q-1]}$ and $i=1,2$. Define
\[
\ha=\left\{\vec{A}\in g_q^*(\hf)\colon \varphi(\vec{A})\setminus \{q\}\in \hi_1\right\} \:{\rm and}\: \hb=\left\{\vec{B}\in g_q^*(\hf)\colon \varphi(\vec{B})\setminus \{q\}\in \hi_2\right\}.
\]
Thus $\{\ha,\hb\}$ is a partition of $g_q^*(\hf)$. Recall the definitions of $\ha'_{(q)}$ and $\hb'_{(q)}$, that is, for $\he\in \{\ha,\hb\}$,
\[
\he_{(q)}'=\left\{(E_1,\ldots, E_{q-1},\emptyset, \emptyset,\ldots, \emptyset)\colon (E_1,\ldots, E_{q-1},\{1\},\emptyset, \ldots, \emptyset)\in \he\right\}.
\]
Denote
\[
\hf_1=\hhd(g(\hf)\setminus \hb)\cup \hd(\ha_{(q)}')  \mbox{ and }  \hf_2=\hhd(g(\hf)\setminus \ha)\cup \hd(\hb_{(q)}')
\]

By Lemma 2.4 (iii),  for any $\vec{A}\in g_q^*(\hf)$, there exists exactly a $\vec{B}\in g_q^*(\hf)$ such that $\vec{A}\cap \vec{B}=(\emptyset,\ldots,\emptyset, \{\ell_q\}, \emptyset,\ldots,\emptyset)$, which implies that $\varphi(\vec{A})\cap \varphi(\vec{B})=\{q\}$.
We claim that $\varphi(\vec{A})\cup \varphi(\vec{B})=[q]$, since otherwise suppose that $x\notin \varphi(\vec{A})\cup \varphi(\vec{B})$ for some $x\in [q-1]$,
then there exists some $E \subset \varphi(\vec{A})\setminus \{q\} \cup \{x\}$ such that $E\in \varphi(g(\hf))$ since $\varphi(g(\hf))$ is shifted.
However, $E\cap \varphi(\vec{B})=\emptyset$, contradicting that $\varphi(g(\hf))$ is intersecting.
Thus $\varphi(\vec{A})\cup \varphi(\vec{B})=[q]$,
which equals to $\vec{A}\cup \vec{B}=(\{\ell_1\},\ldots, \{\ell_q\}, \emptyset,\ldots,\emptyset)$.
Hence such $A$ and $B$ occur in pairs, and one of them belongs to $\ha$ and the other to $\hb$.
This implies that $\ha$ and $\hb$ satisfy the three conditions of Lemma \ref{lem-key4}.
It follows that one of $g(\hf)\cup \ha'_{(q)}\setminus \hb$ and $g(\hf)\cup \hb'_{(q)}\setminus \ha$ is a star. By symmetry, we may assume that $g(\hf)\cup \ha'_{(q)}\setminus \hb$ is a star. Since $\varphi(g(\hf))$ is shifted, $g(\hf)\cup \ha'_{(q)}\setminus \hb$ is a star of the first coordinate.
Since $\varphi(g(\hf))$ is non-trivial, $|I|\ge 2$ for all $I\in \varphi(g(\hf))$.
Thus $\{1,q\} \in I$. By shiftedness, $\{1,j\} \in \varphi(g(\hf))$ for all $j=2,3,\ldots,q$. By the maximality of $|\hf|$, we conclude that
\[
\varphi(g(\hf)) =\left\{\{1,j\}\colon 2\leq j\leq q\right\}\cup \{[2,q]\}.
\]
\end{proof}

Suppose that $S_{i,s}(\hj)=\left\{\{1,j\}\colon 2\leq j\leq q\right\}\cup \{[2,q]\}$ for some $1\le i<s\le q$.
By symmetry, we may assume that $(i,s)=(1,2)$ or $(i,s)=(2,3)$.
If $(i,s)=(1,2)$, then we deduce that $\hj=\left\{\{2,j\}\colon j\in \{1,3,\cdots,q\}\right\}\cup \{[\{1,3,\cdots,q\}\}$.
If $(i,s)=(2,3)$, then we deduce that $\hj=\left\{\{1,j\}\colon 2\leq j\leq q\right\}\cup \{[2,q]\}$.
In both cases, we know that $\hj \cong \left\{\{1,j\}\colon 2\leq j\leq q\right\}\cup \{[2,q]\}$, which implies $\varphi(g(\hf)) \cong \left\{\{1,j\}\colon 2\leq j\leq q\right\}\cup \{[2,q]\}$ without the assumption that $\varphi(g(\hf))$ is shifted.
Thus $g(\hf)$ meets the requirements in Lemma \ref{lem-4.1}, and we complete the proof in this case.

\subsection{The case of $\ell_i\ge2$  for some $i=1,2,\ldots,p$}

By symmetry assume that $\ell_1\geq \ldots\geq\ell_r\geq 2$, $\ell_{r+1}=\ldots=\ell_q=1$ and $\ell_{q+1}=\ldots=\ell_p=0$. By Corollaries \ref{lem-key1} and  \ref{lem-key2}, for each $i\in [r]$ there exist $\ha, \hb\subset g^*_i(\hf)$ such that $\ha\cap \hb=\emptyset$ and $g(\hf)\cup  \ha_{(i)}'\setminus \hb$ is a star of the $\sigma(i)$th coordinate for all $i=1,2,\ldots,r$. Since $\hf$ is shifted, we may always assume that the centre is $1$ of the $\sigma(i)$th coordinate.

\begin{claim}\label{claim-3}
If $\sigma(i)\neq i$,  then $1\in E_{\sigma(i)}$ for all $\vec{E}\in g(\hf)\setminus \hb$ and $1\notin B_{\sigma(i)}$ for every $\vec{B}\in \hb$. Moreover, for all $\vec{E}=(E_1,E_2,\ldots,E_p)\in g(\hf)$, we have (i) $E_i=[\ell_i]$ or $E_i=\emptyset$ or $E_i=\{j\}$ for some $j\in [\ell_i]$; (ii) if
$1\notin E_{\sigma(i)}$ then $E_i= [\ell_i]$.
\end{claim}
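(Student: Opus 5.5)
We set $\hg_1:=g(\hf)\cup \ha_{(i)}'\setminus \hb$. By hypothesis it is a star of the $\sigma(i)$th coordinate with centre $1$, and $\sigma(i)\neq i$. The plan is to read off all four assertions from this star property, combined with the minimality and intersecting properties of $g(\hf)$ from Lemma \ref{lem-key}.

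\emph{First assertion.} Every member of $\hg_1$ has $1$ in its $\sigma(i)$th coordinate. Since $g(\hf)\setminus\hb\subset \hg_1$, we get $1\in E_{\sigma(i)}$ for all $\vec E\in g(\hf)\setminus\hb$.

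Also, for $\vec A\in\ha$ the vector $\vec A_{(i)}'$ differs from $\vec A$ only in coordinate $i\ne\sigma(i)$. So $1\in A_{\sigma(i)}$ for all $\vec A\in \ha$.

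Now take $\vec B\in\hb$. By (C1)--(C3) (or Lemma \ref{lem-key}(iii)), there is $\vec A\in\ha$ with $\vec A\cap\vec B$ supported only at coordinate $i$, equal to $\{\ell_i\}$. Since $1\in A_{\sigma(i)}$ and $\vec A,\vec B$ meet only in coordinate $i$, we get $1\notin B_{\sigma(i)}$.

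\emph{Part (ii).} Suppose $1\notin E_{\sigma(i)}$. By the first assertion, $\vec E\in\hb\subset g_i^*(\hf)$. By Lemma \ref{lem-key}(iii), $\vec E$ has a partner $\vec A\in\ha$ with $E_i\cup A_i=[\ell_i]$ and $E_i\cap A_i=\{\ell_i\}$.

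We want to show $E_i=[\ell_i]$. Suppose instead that some $x\in[\ell_i-1]\setminus E_i$ exists. Then $x\in A_i$, and we use shiftedness via Lemma \ref{lem-key}(i) on the $\sigma(i)$ coordinate together with other members of $g(\hf)$. Concretely, take any $\vec C\in g(\hf)\setminus\hb$ whose $i$th coordinate avoids $E_i$. Then $\vec C$ must meet $\vec E$ in some coordinate other than $\sigma(i)$, since $1\in C_{\sigma(i)}$ but $1\notin E_{\sigma(i)}$, and $E_{\sigma(i)}$ may still meet $C_{\sigma(i)}$ elsewhere.

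I expect the cleanest route is a weight comparison. We replace $\vec E$ by $\vec E''=(E_1,\dots,E_{i}\cup\{x\},\dots)$ or alternatively shift $A$. The key observation is that any $\vec A\in \ha$ with $A_i\setminus\{\ell_i\}\neq\emptyset$ can be shifted within coordinate $i$ to produce $\vec A^\circ\preceq (A_1,\dots,A_i\setminus\{\ell_i\}\cup\{y\},\dots)$ for $y\in E_i$. That would make $\vec A^\circ$ disjoint from some member of $\hb$, unless $A_i=\{\ell_i\}$, i.e. $E_i=[\ell_i]$. So the plan is to show $|A_i|=1$ by this shifting argument, using Lemma \ref{lem-key}(i) and the fact that $g(\hf)$ is intersecting. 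This is the main obstacle: the shifted element $\vec A^\circ$ might avoid $\vec E$ only if $\vec A$ and $\vec E$ agree outside coordinate $i$ in the right way. To handle this, I would use that $\vec A\cap \vec E$ is exactly $\{\ell_i\}$ in coordinate $i$.

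\emph{Part (i).} Let $\vec E\in g(\hf)$ be arbitrary and suppose $E_i\ne\emptyset$ and $E_i\neq[\ell_i]$. If $1\notin E_{\sigma(i)}$, part (ii) already gives $E_i=[\ell_i]$. So assume $1\in E_{\sigma(i)}$.

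Suppose $|E_i|\ge2$, and pick $y\in E_i$. Consider $\vec E'$ obtained by deleting $E_i\setminus\{y\}$ from coordinate $i$. We show $\vec E'$ still meets every member of $g(\hf)$:
\begin{itemize}
\item members outside $\hb$ are met at $1$ in coordinate $\sigma(i)$;
\item members $\vec B\in\hb$ have $B_i=[\ell_i]$ by part (ii), so they contain $y$.
\end{itemize}
Replacing $\vec E$ by $\vec E'$ then keeps the family intersecting and does not decrease $|\hhd(\cdot)|$, while strictly decreasing $\sum|\vec F|$. This contradicts the choice of $g(\hf)$ (the same argument as in Lemma \ref{lem-key}(iii)). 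Hence $|E_i|\le1$, which proves part (i).
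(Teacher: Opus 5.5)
\textbf{Gap: part (ii) is never actually proved.} Your first assertion is argued correctly. Your part (i) argument (delete all but one element of $E_i$ and invoke minimality of $\sum|\vec F|$) is fine once you know $B_i=[\ell_i]$ for every $\vec B\in\hb$; it is essentially the paper's last step. But that fact is exactly part (ii), and part (i) depends on it.

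Your proposed ``key observation'' cannot work. You shift a member $\vec A\in\ha$, moving $\ell_i$ to some $y\in E_i\setminus\{\ell_i\}$, and hope the resulting $\vec A^\circ$ is disjoint from some member of $\hb$. But $\vec A^\circ$ belongs to $g(\hf)$, which is intersecting, so it meets every member of $\hb$. Moreover, $\ell_i\notin A^\circ_i$ puts $\vec A^\circ$ outside $\hb$. The star property then only gives $1\in A^\circ_{\sigma(i)}\subseteq A_{\sigma(i)}$, which you already know. So no contradiction can come from shifting on the $\ha$ side.

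\textbf{The missing idea: shift the $\hb$-member, whose $\sigma(i)$th coordinate lacks $1$.}
\begin{itemize}
\item Let $\vec B\in\hb$ and suppose some $x\in[\ell_i]\setminus B_i$ exists.
\item Lemma \ref{lem-key}(i) gives $\vec E\in g(\hf)$ with $\vec E\preceq(B_1,\ldots,(B_i\setminus\{\ell_i\})\cup\{x\},\ldots,B_p)$.
\item Since $\ell_i\notin E_i$, we have $\vec E\notin\hb\subset g_i^*(\hf)$.
\item By the star property, $1\in E_{\sigma(i)}$. Since $\sigma(i)\neq i$, $E_{\sigma(i)}\subseteq B_{\sigma(i)}$, so $1\in B_{\sigma(i)}$. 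This contradicts $1\notin B_{\sigma(i)}$ from your first assertion.
\end{itemize}
Hence $B_i=[\ell_i]$ for all $\vec B\in\hb$. Combined with your first assertion (if $1\notin E_{\sigma(i)}$ then $\vec E\in\hb$), this is exactly (ii), and your part (i) argument then goes through as written.
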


\begin{proof}
Since  $g(\hf)\cup  \ha_{(i)}'\setminus \hb$ is a star of the $\sigma(i)$th coordinate, we infer that $1\in E_{\sigma(i)}$ for all $(E_1,E_2,\ldots,E_p)\in g(\hf)\setminus \hb$.  Note that $\sigma(i)\neq i$. By Corollaries \ref{lem-key1}, \ref{lem-key2}  and Lemma \ref{lem-key} (iii), we infer that for any $\vec{B}\in \hb$ there exists $\vec{A}\in  \ha$ such that $A_{\sigma(i)}\cap B_{\sigma(i)}=\emptyset$,  implying  $1\notin B_{\sigma(i)}$. Thus $1\notin B_{\sigma(i)}$ for every $\vec{B}\in \hb$.

 Suppose that there is some $(B_1,B_2,\ldots,B_p)\in \hb$ with $B_i\neq [\ell_i]$. Let $x\in [\ell_i]\setminus B_i$. By Lemma \ref{lem-key} (i) there exists $\vec{E}\in g(\hf)$ such that
 \[
 \vec{E}=(E_1,E_2,\ldots,E_p)\preceq (B_1,\ldots,B_i\cup \{x\}\setminus \{\ell_i\},\ldots,B_p).
 \]
Now $\vec{E}\notin \hb$ implies $1\in E_{\sigma(i)}\subset B_{\sigma(i)}$, a contradiction. Thus for all  $(B_1,B_2,\ldots,B_p)\in \hb$, we have $B_i=[\ell_i]$. By Lemma \ref{lem-key} (iii), $A_i=\{\ell_i\}$ for all  $(A_1,A_2,\ldots,A_p)\in \ha$.

Suppose that there exists $\vec{R}=(R_1,R_2,\ldots,R_p)\in g(\hf)\setminus \hb$ with $|R_i|\geq 2$. Then $\vec{R}\notin \ha\cup \hb$. It follows that  $1\in R_{\sigma(i)}$.
Let $\ell_i'=s^+(R_i)$ and let $\vec{R}'=(R_1,\ldots,R_i\setminus\{\ell_i'\},\ldots,R_p)$. Then $\vec{R}'\notin g(\hf)$ since $g(\hf)$ is an anti-chain. It follows that there exists $\vec{B}\in g(\hf)$ such that $|\vec{R}'\cap \vec{B}|=0$. Since $1\in  R_{\sigma(i)}$ implies $1\notin B_{\sigma(i)}$, we infer $\vec{B}\in \hb$. Then $B_i=[\ell_i]$ and $|R_i|\geq 2$ implies $(R_i\setminus \{\ell_i'\})\cap B_i\neq \emptyset$, contradicting $|\vec{R}'\cap \vec{B}|=0$. Thus $|E_i|=0,1$ or $\ell_i$  for all $\vec{E}\in g(\hf)$ and the claim follows.
\end{proof}

We need the following Claim, whose proof is similar to the one above.
\begin{claim}\label{claim-4}
If $\sigma(i)= i$, then  $1\in E_{i}$ for every $\vec{E}\in g(\hf)\setminus \hb$ and $1\notin B_{i}$ for every $\vec{B}\in \hb$. Moreover, for all $\vec{E}=(E_1,E_2,\ldots,E_p)\in g(\hf)$ we have $E_i=[2,\ell_i]$ or $E_i=\{1\}$ or $E_i=\{1,j\}$ for some $j\in [2,\ell_i]$.
\end{claim}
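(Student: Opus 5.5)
We follow the proof of Claim \ref{claim-3}, with the $\sigma(i)$th coordinate now equal to the $i$th. Throughout, $\ha,\hb\subset g_i^*(\hf)$ are the families from Corollaries \ref{lem-key1}/\ref{lem-key2} for which $g(\hf)\cup\ha_{(i)}'\setminus\hb$ is a star of the $i$th coordinate with centre $1$.

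\textbf{Step 1: the star property.} Since $g(\hf)\cup\ha_{(i)}'\setminus\hb$ is a star with centre $1$ in coordinate $i$, every $\vec E\in g(\hf)\setminus\hb$ satisfies $1\in E_i$. Every $\vec A'_{(i)}$ with $\vec A\in\ha$ also satisfies $1\in A_i\setminus\{\ell_i\}$. Since $\ell_i\geq 2$, this forces $1\in A_i$ and $\ell_i\neq 1$.

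\textbf{Step 2: members of $\hb$ avoid $1$.} Take $\vec B\in\hb$. By Lemma \ref{lem-key}(iii) together with (C2)/(C3), there is $\vec A\in\ha$ with $\vec A\cap\vec B=(\emptyset,\ldots,\{\ell_i\},\ldots,\emptyset)$ and $A_i\cup B_i=[\ell_i]$. Since $1\in A_i$ and $A_i\cap B_i=\{\ell_i\}\neq\{1\}$, we get $1\notin B_i$.

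\textbf{Step 3: the shapes of $A_i$ and $B_i$.} Suppose some $\vec B\in\hb$ has $B_i\neq[2,\ell_i]$. Then there is $x\in[2,\ell_i]\setminus B_i$; since $x\le \ell_i$ and $x\neq\ell_i$, we have $x<\ell_i$. Lemma \ref{lem-key}(i) gives $\vec E\in g(\hf)$ with $\vec E\preceq(B_1,\ldots,(B_i\setminus\{\ell_i\})\cup\{x\},\ldots,B_p)$. Here $\ell_i\notin E_i$, so $\vec E\notin\hb$, and Step 1 gives $1\in E_i\subset(B_i\setminus\{\ell_i\})\cup\{x\}$. This contradicts $1\notin B_i$ and $x\neq1$. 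Hence $B_i=[2,\ell_i]$ for every $\vec B\in\hb$. Lemma \ref{lem-key}(iii) then gives $A_i=[\ell_i]\setminus[2,\ell_i-1]=\{1,\ell_i\}$ for every $\vec A\in\ha$.

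\textbf{Step 4: all other members.} Let $\vec R\in g(\hf)\setminus(\ha\cup\hb)$; by Step 1, $1\in R_i$. Suppose $|R_i|\geq 3$, or $|R_i|=2$ with the second element outside $[2,\ell_i]$ (the latter is impossible, since $s^+(R_i)\le\ell_i$). Put $\ell_i'=s^+(R_i)$ and $\vec R'=\vec R$ with $\ell_i'$ removed from the $i$th coordinate. Then $\vec R'\notin g(\hf)$ and, by the minimality of $\sum|\vec F|$ (exactly as in Lemma \ref{lem-key}(iii)), some $\vec C\in g(\hf)$ satisfies $\vec R'\cap\vec C=\emptyset$. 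Since $1\in R_i'$, we have $1\notin C_i$, so $\vec C\in\hb$ and $C_i=[2,\ell_i]$. But $R_i'$ contains $1$ and at least one element of $[2,\ell_i-1]\subset C_i$, a contradiction. Thus $|R_i|\le 2$, so $R_i=\{1\}$ or $\{1,j\}$ with $j\in[2,\ell_i]$.

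\textbf{Main obstacle.} The delicate point is the minimality argument in Step 4: we must guarantee that replacing $\vec R$ by $\vec R'$, when $\vec R'$ meets every member of $g(\hf)$, really produces a generating set that is still intersecting, maximal and of smaller weight. This mirrors the use of minimality in Lemma \ref{lem-key}(iii), and we rely on that same reasoning.
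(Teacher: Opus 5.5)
Your proposal is correct and follows the paper's proof step for step. The four steps are:
\begin{enumerate}
\item the star property gives $1\in E_i$ off $\hb$;
\item the Lemma~\ref{lem-key}(iii) partner in $\ha$ gives $1\notin B_i$;
\item Lemma~\ref{lem-key}(i) applied with $x\in[2,\ell_i]\setminus B_i$ forces $B_i=[2,\ell_i]$, and hence $A_i=\{1,\ell_i\}$;
\item deleting $s^+(R_i)$ from any $\vec R$ with $|R_i|\ge 3$ yields a member of $\hb$ disjoint from $\vec R'$, which is impossible.
\end{enumerate}
You are slightly more explicit than the paper in two places. You justify $\vec E\notin\hb$ by noting $\ell_i\notin E_i$. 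You also trace the existence of the disjoint $\vec C$ to the weight-minimality argument of Lemma~\ref{lem-key}(iii), which is what the paper's terse ``it follows'' relies on.
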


\begin{proof}
Since $g(\hf)\cup  \ha_{(i)}'\setminus \hb$ is a star of the $i$th coordinate, we infer that $1\in E_{i}$ for every $\vec{E}\in g(\hf)\setminus \hb$. By Corollaries \ref{lem-key1}, \ref{lem-key2}  and Lemma \ref{lem-key} (iii), we infer that for any $\vec{B}\in \hb$ there exists $\vec{A}\in  \ha$ such that $A_{i}\cap B_{i}=\{\ell_i\}$,  implying  $1\notin B_{i}$. Thus $1\notin B_{i}$ for every $\vec{B}\in \hb$.

Suppose that there exists some $\vec{B}\in \hb$ with $B_i\subsetneq [2,\ell_i]$.  Then let $x\in [2,\ell_i]\setminus B_i$. By Lemma \ref{lem-key} (i) there exists $\vec{E}\in g(\hf)$ such that
 \[
 \vec{E}=(E_1,E_2,\ldots,E_p)\preceq (B_1,\ldots,B_i\cup \{x\}\setminus \{\ell_i\},\ldots,B_p).
 \]
Since $\vec{E}\in g(\hf)\setminus \hb$, we infer that $1\in E_i\subset B_i\cup \{x\}\setminus \{\ell_i\}$. It follows that $1\in B_i$, a contradiction. Thus $B_i= [2,\ell_i]$ for all $\vec{B}\in \hb$. By Lemma \ref{lem-key} (iii), $A_i=\{1,\ell_i\}$ for all  $\vec{A}\in \ha$.

Recall that $1\in E_i$ for all $\vec{E}\in g(\hf)\setminus \hb$. If $|R_i|\geq 3$ for some $\vec{R}\in  g(\hf)\setminus (\ha\cup \hb)$, let $\ell_i'=s^+(R_i)$ and let $\vec{R}'=(R_1,\ldots,R_i\setminus \{\ell_i'\},\ldots,R_p)$.
Then $\vec{R}'\notin g(\hf)$ since $g(\hf)$ is an anti-chain. It follows that there exists $\vec{B}\in g(\hf)$ such that $|\vec{R}'\cap \vec{B}|=0$. Since $1\in  R_{i}$ implies $1\notin B_{i}$, we infer $\vec{B}\in \hb$. Then $B_i=[2,\ell_i]$ and $|R_i|\geq 3$ implies $(R_i\setminus \{\ell_i'\})\cap B_i\neq \emptyset$, contradicting $|\vec{R}'\cap \vec{B}|=0$.
Therefore, $|R_i|\leq 2$ and $1\in R_i$ for all $\vec{R}\in g(\hf)\setminus (\ha\cup \hb)$. Thus the claim follows.
\end{proof}

\begin{claim}\label{claim-5}
$\sigma(1)=\sigma(2)=\ldots=\sigma(r)$.
\end{claim}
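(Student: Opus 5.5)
We argue by contradiction. Suppose $\sigma(i)\neq\sigma(j)$ for some $i,j\in[r]$, and let $\ha^{(i)},\hb^{(i)}$ and $\ha^{(j)},\hb^{(j)}$ be the families given by Corollaries \ref{lem-key1} and \ref{lem-key2}. Write $s=\sigma(i)$ and $s'=\sigma(j)$. By Claims \ref{claim-3} and \ref{claim-4}, we have $1\in E_s$ for every $\vec E\in g(\hf)\setminus\hb^{(i)}$ and $1\notin B_s$ for every $\vec B\in\hb^{(i)}$. The same holds for $s'$ with $\hb^{(j)}$. Thus $\hb^{(i)}$ is exactly the set of members of $g(\hf)$ avoiding $1$ in coordinate $s$, and likewise $\hb^{(j)}$ for coordinate $s'$. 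The plan is to show that these two descriptions are incompatible with the rigid shapes of the $i$th and $j$th coordinates forced by the claims.

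First, I would locate $\hb^{(i)}$ and $\hb^{(j)}$ inside $g(\hf)$.
\begin{itemize}
\item If $s\neq i$, Claim \ref{claim-3} gives $B_i=[\ell_i]$ for all $\vec B\in\hb^{(i)}$ and $A_i=\{\ell_i\}$ for all $\vec A\in\ha^{(i)}$.
\item If $s=i$, Claim \ref{claim-4} gives $B_i=[2,\ell_i]$ and $A_i=\{1,\ell_i\}$.
\end{itemize}
In either case every $\vec B\in\hb^{(i)}$ has $|B_i|\ge \ell_i-1\ge 1$ and contains $\ell_i$. I would then take $\vec B\in\hb^{(j)}$ and ask whether it lies in $\hb^{(i)}$.
\begin{itemize}
\item If $\vec B\notin\hb^{(i)}$, then $1\in B_s$. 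Also $B_j$ is $[\ell_j]$ or $[2,\ell_j]$.
\item Conversely, elements of $\hb^{(i)}\setminus\hb^{(j)}$ contain $1$ in coordinate $s'$ and have large $i$th coordinate.
\end{itemize}

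The key step is to produce two members of $g(\hf)$ that are disjoint, contradicting that $g(\hf)$ is intersecting. Take $\vec B\in\hb^{(i)}$ and its partner $\vec A\in\ha^{(i)}$ from Lemma \ref{lem-key} (iii), so that $\vec A\cap\vec B$ is concentrated on $\{\ell_i\}$ in coordinate $i$. Using Claim \ref{claim-3}(ii) when $s\neq i$, or Claim \ref{claim-4} when $s=i$, applied to index $j$, I would determine the $j$th coordinate of $\vec A$ and of $\vec B$. By Claim \ref{claim-3}(i), this coordinate has one of the forms $\emptyset$, a singleton, or $[\ell_j]$. Next I would apply the shifting property, Lemma \ref{lem-key} (i), to $\vec A$: replace $\ell_i$ by an element not used by $\vec B$, or move its coordinate-$s'$ element $1$. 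This should give a member of $g(\hf)$ that misses $\vec B$ entirely.

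The argument will likely split into cases:
\begin{itemize}
\item $s=i$ and $s'=j$;
\item $s=i$ and $s'\neq j$ (and the symmetric case);
\item $s\neq i$ and $s'\neq j$, with the subcases $s=j$ or not.
\end{itemize}
In the last case, both $\hb^{(i)}$ and $\hb^{(j)}$ consist of vectors with full coordinates $[\ell_i]$ and $[\ell_j]$. Members outside $\hb^{(i)}$ have $i$th coordinate of size at most $1$. I would count how many elements of $g(\hf)$ must then lie outside both $\hb$-families, and use intersection with $\ha^{(i)}$ (whose $i$th coordinate is just $\{\ell_i\}$) to force a contradiction.

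The main obstacle is the case $s'=i$ and $s=j$, where each coordinate serves as the star centre for the other index. Here I would first try to exploit $\ell_i,\ell_j\ge2$. There is an $\vec A\in\ha^{(i)}$ with $A_i=\{\ell_i\}$ and $1\in A_j$. Claim \ref{claim-4} for $j$ forces $A_j=\{1\}$ or $\{1,x\}$. Shifting $\ell_i\to 1$ via Lemma \ref{lem-key} (i) then yields an element that must lie outside $\hb^{(j)}$ yet has $1$ in coordinate $i$. Comparing it with a member of $\hb^{(j)}$, whose $j$th coordinate is $[2,\ell_j]$ and which, by Claim \ref{claim-3}(ii) for $j$, has $i$th coordinate $[\ell_i]$, should produce the disjoint pair needed. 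If this fails, I would fall back on Lemma \ref{lem-key4} applied with the alternative partition, to show that the non-star option also increases $|\hf|$.
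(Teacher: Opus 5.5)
Your proposal is a plan rather than a proof, and the one concrete mechanism it offers cannot produce the disjoint pair you want.

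\textbf{The shifting step.} Lemma~\ref{lem-key}(i) only gives some member $\vec E''\preceq\vec E'$ of $g(\hf)$ below the shifted vector $\vec E'$. So you may conclude $\vec E''\cap\vec B=\emptyset$ only if $\vec E'\cap\vec B=\emptyset$. If $\vec B$ is the partner of $\vec A$ from Lemma~\ref{lem-key}(iii), this never happens:
\begin{itemize}
\item Since $A_i\cup B_i=[\ell_i]$, moving $\ell_i$ down to any $x\notin A_i$ puts $x\in B_i$. This is exactly how $A_i\cup B_i=[\ell_i]$ was proved, so the mechanism is already used up.
\item Any other shift of $\vec A$ leaves $\ell_i$, which lies in $B_i$, in coordinate $i$.
\item The element $1$ can never be moved by a shift.
\end{itemize}
The remaining cases are only announced (``I would count \ldots'', ``fall back on Lemma~\ref{lem-key4}'').

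\textbf{The ``main obstacle'' case.} Here you use the wrong shapes. Since $\sigma(j)=i\neq j$, Claim~\ref{claim-3}, not Claim~\ref{claim-4}, governs coordinate $j$, so members of $\hb^{(j)}$ have $B_j=[\ell_j]$. Moreover, members of $\hb^{(j)}$ satisfy $1\notin B_i$, so their $i$th coordinate is never $[\ell_i]$. With the correct shapes this case is actually short. $\vec A\in\ha^{(i)}$ has $1\notin A_i$, hence $\vec A\in\hb^{(j)}$ and $A_j=[\ell_j]$. Its index-$j$ partner $\vec C$ has $C_j=\{\ell_j\}$, so $1\notin C_j$, hence $\vec C\in\hb^{(i)}$ and $C_i=[\ell_i]$. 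Then $\ell_i\in A_i\cap C_i$, contradicting that $\vec A\cap\vec C$ is supported on coordinate $j$ only.

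\textbf{The missing idea.} The paper uses the maximality of $\hf$ to \emph{add} small generators to $g(\hf)$. The contradiction then comes from $\hb^{(i)}\subseteq g_i^*(\hf)$, not from the intersecting property. Anchor at index $1$.
\begin{itemize}
\item If $\sigma(1)=1$, Claim~\ref{claim-4} shows that $(\{1,y\},\emptyset,\ldots,\emptyset)$, for $y\in[2,\ell_1]$, meets every member of $g(\hf)$: those with $1\notin E_1$ have $E_1=[2,\ell_1]\ni y$. By maximality this vector belongs to $g(\hf)$.
\item If $\sigma(1)=t\neq1$, Claim~\ref{claim-3}(ii) shows in the same way that, for every $y\in[\ell_1]$, the vector with $\{y\}$ in coordinate $1$, $\{1\}$ in coordinate $t$ and $\emptyset$ elsewhere lies in $g(\hf)$.
\end{itemize}
Now suppose $\sigma(i)\neq\sigma(1)$ for some $i\in[2,r]$. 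One of these vectors has no $1$ in coordinate $\sigma(i)$ (take $y\geq2$ if $\sigma(i)=1$). By the first assertion of Claims~\ref{claim-3} and~\ref{claim-4}, it lies in $\hb^{(i)}\subseteq g_i^*(\hf)$. But its $i$th coordinate is $\emptyset$ or $\{1\}$, which does not contain $\ell_i\geq2$. Without some such use of maximality to enlarge $g(\hf)$, you have not shown that the shapes from Claims~\ref{claim-3} and~\ref{claim-4}, together with the pairing of Lemma~\ref{lem-key}(iii), force a contradiction in the remaining configurations.
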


\begin{proof}
By symmetry, we distinguish two cases.

\vspace{5pt}
{\bf Case 1.} $\sigma(1)=1$.

By Claim \ref{claim-4}, one can partition $g(\hf)$ into several classes as follows:
\begin{align*}
&g^{(1)}(\hf)=\{(E_1,E_2,\ldots,E_p)\in g(\hf)\colon E_1=[2,\ell_1]\}, \\[5pt] &g^{(2,j)}(\hf)=\{(E_1,E_2,\ldots,E_p)\in g(\hf)\colon E_1=\{1,j\}\},\  j=2,\ldots,\ell_1,\\[5pt]
&g^{(3)}(\hf)=\{(E_1,E_2,\ldots,E_p)\in g(\hf)\colon E_1=\{1\}\}.
\end{align*}
Since  $(\{1,j\}, \emptyset,\ldots,\emptyset)$ intersects every element of $g(\hf)$ and $\hf$ is a maximum non-trivial intersecting family, it follows that $(\{1,j\},\emptyset,\emptyset,\ldots,\emptyset)\in g(\hf)$. Combining that $g(\hf)$ is an anti-chain, we have
\[
g^{(2,j)}(\hf)=\{(\{1,j\},\emptyset,\emptyset,\ldots,\emptyset)\}, \ j=2,\ldots,\ell_1.
\]
  Now we claim that $\sigma(i)=1$ for all $i=2,3,\ldots,r$. Recall that  $g(\hf)\cup  \ha_{(i)}'\setminus \hb$ is a star of the $\sigma(i)$th coordinate.
  If $\sigma(i)\neq 1$ then by Claim \ref{claim-3} we  have $1\in E_{\sigma(i)}$ for all $\vec{E}\in g(\hf)\setminus \hb$. It  implies that $g^{(2,j)}(\hf)\subset \hb$ for all $j$. However, by $\hb\subset g_{i}^{*}(\hf)$ we have $\ell_i\in B_i$ for all $(B_1,\ldots,B_i,\ldots,B_p)\in \hb$, a contradiction. Thus $\sigma(i)=1$ for all $i$.

\vspace{5pt}
{\bf Case 2.} $\sigma(1)=t$ for some $t\in [2,p]$.

By Claim \ref{claim-3}, one can partition $g(\hf)$  into several classes as follows:
\begin{align*}
&g^{(1)}(\hf)=\{(E_1,E_2,\ldots,E_p)\in g(\hf)\colon E_1=[\ell_1],\ 1\notin E_t\}, \\[5pt]
&g^{(2,j)}(\hf)=\{(E_1,E_2,\ldots,E_p)\in g(\hf)\colon E_1=\{j\},\ 1\in E_t\},\  j=1,\ldots,\ell_1,\\[5pt]
&g^{(3)}(\hf)=\{(E_1,E_2,\ldots,E_p)\in g(\hf)\colon E_1=\emptyset,\ 1\in E_t\}.
\end{align*}

Similar to Case 1, we have
\[
g^{(2,j)}(\hf)=\{(\{j\},\emptyset,\ldots,\emptyset,\{1\},\emptyset,\ldots,\emptyset)\}, \ j=1,2,\ldots,\ell_1.
\]
Recall that  $g(\hf)\cup  \ha_{(i)}'\setminus \hb$ is a star of the $\sigma(i)$th coordinate.  Suppose indirectly that $\sigma(i)\neq  t$ for some $2\leq i\leq r$.   By Claims \ref{claim-3} and \ref{claim-4} we  have $1\in E_{\sigma(i)}$ for all $\vec{E}\in g(\hf)\setminus \hb$. It follows that $(\{j\},\emptyset,\ldots,\emptyset,\{1\},\emptyset,\ldots,\emptyset)\in \hb$. Since $\hb\subset g_{i}^*(\hf)$, $\ell_i\in B_i$ for all $(B_1,B_2,\ldots,B_p)\in \hb$, contradicting $\ell_i\geq 2$ for $i\leq r$. Thus $\sigma(i)=t$ for all $i=1,2,\ldots,r$.
\end{proof}

By Claim \ref{claim-5}, we may assume that $\sigma(1)=\sigma(2)=\ldots=\sigma(r)=t$ for some $t\in [p]$.

\begin{claim}\label{claim-8}
Let $\vec{E}^*=(E_1^*,E_2^*,\ldots,E_p^*)\in g(\hf)$ with $1\notin E_t^*$. Then $E_i^* =[\ell_i]$ for $i\neq t$ and $E_t^* =[2,\ell_t]$.
\end{claim}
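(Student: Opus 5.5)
Fix $\vec{E}^*\in g(\hf)$ with $1\notin E_t^*$. We first locate it among the classes constructed in the proof of Claim~\ref{claim-5}. In both cases every element of $g(\hf)$ outside $g^{(1)}(\hf)$ contains $1$ in its $t$th coordinate, so $\vec{E}^*\in g^{(1)}(\hf)$. Hence $E_1^*=[2,\ell_1]$ if $t=1$, and $E_1^*=[\ell_1]$ if $t\neq 1$. More generally, take any $i\in[r]$ with $i\neq t$. Since $\sigma(i)=t$, Claim~\ref{claim-3}(ii) applied with $1\notin E^*_t$ gives $E^*_i=[\ell_i]$. If $t\le r$, Claim~\ref{claim-4} (used with $i=t$) shows that $E^*_t$ is one of $[2,\ell_t]$, $\{1\}$ or $\{1,j\}$. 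The latter two contain $1$, so $E^*_t=[2,\ell_t]$.

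Next come the coordinates $i$ with $\ell_i\le 1$, and possibly $t$ itself when $\ell_t\le 1$. For $i>q$ we have $\ell_i=0$, so $E_i^*=\emptyset=[0]$ and there is nothing to prove. For $r<i\le q$ we have $\ell_i=1$, so $E_i^*\in\{\emptyset,\{1\}\}$, and we must show $E_i^*=\{1\}$ whenever $i\ne t$. Suppose instead that $E^*_i=\emptyset$. The pairs $(\{j\},\ldots,\{1\}_t,\ldots)$ found in Case~2 (or $(\{1,j\},\emptyset,\ldots)$ in Case~1) already cover coordinate $t$, which suggests an exchange argument. Since $\ell_i=1$, some $\vec{C}\in g_i^*(\hf)$ exists, and Lemma~\ref{lem-key}(iii) pairs it with some $\vec{D}\in g_{i,1}^*(\hf)$ such that $\vec C\cap\vec D=(\emptyset,\ldots,\{1\}_i,\ldots,\emptyset)$. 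Both $C_i$ and $D_i$ equal $\{1\}$. Because $g(\hf)\setminus\{\vec E^*\}$ all contain $1$ at coordinate $t$, one of $\vec C,\vec D$ must be $\vec{E}^*$. That forces $E^*_i=\{1\}$, a contradiction. Hence $E^*_i=[\ell_i]$ for every $i\ne t$.

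If $t>r$, then $\ell_t\le1$ and $[2,\ell_t]=\emptyset$. Since $1\notin E_t^*$ and $E_t^*\subset[\ell_t]$, we get $E_t^*=\emptyset$ directly. It remains to check that $\vec{E}^*$ exists and is unique. Existence follows from non-triviality: otherwise $g(\hf)$, and hence $\hf$, would be a star at coordinate $t$. Uniqueness follows from the description just proved, since any such element is completely determined. I expect the main obstacle to be the coordinates with $\ell_i=1$. There Claims~\ref{claim-3} and~\ref{claim-4} give no information, so the pairing argument via Lemma~\ref{lem-key}(iii) has to be carried out carefully. In particular, one must verify that the partner element lies outside the $t$-star, i.e.\ equals $\vec E^*$, rather than only among the elements containing $1$ at coordinate $t$.
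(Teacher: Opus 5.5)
\textbf{There is a genuine gap at the coordinates with $\ell_j=1$.} Your treatment of the coordinates $i\le r$ and of coordinate $t$ is fine and agrees with the paper (Claims~\ref{claim-3} and~\ref{claim-4} with $\sigma(i)=t$). The trouble is in the coordinates $j\in[r+1,q]$ with $j\neq t$, which is where the real content of the claim lies.

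Your pairing argument needs every element of $g(\hf)$ other than $\vec E^*$ to contain $1$ in its $t$th coordinate. In other words, $\vec E^*$ must be the \emph{unique} element outside the $t$-star. Nothing proved before this claim gives that. Claims~\ref{claim-3}--\ref{claim-5} fix only the first $r$ coordinates of the elements with $1\notin E_t$, and such elements may still differ in the coordinates with $\ell_j=1$. Your closing justification of uniqueness (``any such element is completely determined'') is circular, because it relies on the description whose proof needed uniqueness. Without uniqueness, Lemma~\ref{lem-key}(iii) only tells you that \emph{some} element with $1\notin E_t$ has $E_j=\{1\}$, not that $\vec E^*$ does.

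\textbf{The intersecting property and the earlier structure results cannot close this gap.} They consist of the intersecting, anti-chain and shiftedness properties, Lemma~\ref{lem-key}, and Claims~\ref{claim-3}--\ref{claim-5}. Take $p=3$, $t=1$, $\vec k=(2,1,1)$, $\vec n=(5,3,3)$, $(\ell_1,\ell_2,\ell_3)=(3,1,1)$ and the generating set
\[
(\{1,2\},\emptyset,\emptyset),\ (\{1,3\},\emptyset,\emptyset),\ (\{1\},\{1\},\{1\}),\ ([2,3],\{1\},\emptyset),\ ([2,3],\emptyset,\{1\}).
\]
The family it generates is shifted, non-trivial and intersecting, and it satisfies all the facts above. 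Yet it has two elements outside the star, each with an empty coordinate where $\ell_j=1$. It has $25$ members, whereas the structure of Lemma~\ref{lem-4.1} with the same $\ell$ has $29$. Only the maximality of $|\hf|$ excludes it, so you must invoke maximality again at this point.

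\textbf{The paper supplies this through Lemma~\ref{lem-key4}.} Fix $j$ and take:
\begin{itemize}
\item $\hc$: the elements with $E_j=\{1\}$ whose first $r$ coordinates form the minimal pattern inside the star, e.g.\ $(\{1\},\emptyset,\ldots,\emptyset)$ when $t=1$;
\item $\hd$: the elements with $E_j=\{1\}$ that lie outside the star.
\end{itemize}
Using the anti-chain property and Lemma~\ref{lem-key}(iii), the paper checks (C1)--(C3). It then concludes that $g(\hf)\cup\hc'\setminus\hd$ must be a star at coordinate $t$. An element outside the star with $E_j=\emptyset$ would survive into this family, which is a contradiction. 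The case $t\in[r+1,q]$ is handled in the same way with $\hg$ and $\hh$.
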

\begin{proof}
Fix an arbitrary set $\vec{E}=(E_1,E_2,\ldots,E_p)\in g(\hf)$ with $1\notin E_t$. Since $\ell_i=0$ for $q+1\leq i\leq p$, we infer  $E_i=\emptyset=[0]$. Thus we only need to consider the first $q$ coordinates. We distinguish two cases.

{\bf Case 1.} $1\leq t\leq r$.

By permuting parts of $\prod_{1\leq i\leq p}\binom{[n_i]}{k_i}$, we may assume $t=1$. By Claim  \ref{claim-3}, we infer that $E_i =[\ell_i]$ for $2\leq i\leq r$. By Claim \ref{claim-4} we have $E_1 =[2,\ell_1]$. Thus, for any $\vec{E}\in g(\hf)$ with $1\notin E_1$ we have
 \begin{align}\label{ineq-4.1}
 (E_1,E_2\ldots,E_r)=([2,\ell_1],[\ell_2]\ldots,[\ell_r]).
 \end{align}
 It follows that for every $x\in [\ell_i]$ with $2\leq i\leq r$ and for every $y\in [2,\ell_1]$,
 \begin{align}\label{ineq-4.12}
  (\{1\},\emptyset,\ldots,\emptyset,\{x\},\emptyset,\ldots,\emptyset)\in g(\hf)\ {\rm and}\  (\{1,y\},\emptyset,\ldots,\emptyset)\in g(\hf).
 \end{align}

Let us fix a $j\in [r+1,q]$. Define
\[
\hc=\{\vec{E}\in g(\hf)\colon E_j=\{1\},\ (E_1,E_2\ldots,E_r)=(\{1\},\emptyset,\ldots,\emptyset)\},
\]
\[
\hd=\{\vec{E}\in g(\hf)\colon E_j=\{1\},\ (E_1,E_2\ldots,E_r)=([2,\ell_1],[\ell_2]\ldots,[\ell_r])\},
\]
 \[
\hc'= \{(E_1,\ldots,E_j\setminus\{1\},\ldots,E_p)\colon \vec{E}\in \hc\},\ \hd'= \{(E_1,\ldots,E_j\setminus\{1\},\ldots,E_p)\colon \vec{E}\in \hd\}.
 \]

Since $\ell_j=1$, there exists $\vec{E}\in g(\hf)$ with $E_j=\{1\}$. If $1\notin E_{1}$, then by \eqref{ineq-4.1} we have $(E_1,E_2\ldots,E_r)=([2,\ell_1],[\ell_2]\ldots,[\ell_r])$. It follows that $\vec{E}\in \hd$. If $1\in E_1$, then $(E_1,E_2\ldots,E_r)=(\{1\},\emptyset,\ldots,\emptyset)$.
Since otherwise $ (\{1\},\emptyset,\ldots,\emptyset,\{x\},\emptyset,\ldots,\emptyset) \preceq \vec{E}$ or $ (\{1,y\},\emptyset,\ldots,\emptyset) \preceq \vec{E}$, however,
$  (\{1\},\emptyset,\ldots,\emptyset,\{x\},\emptyset,\ldots,\emptyset)\in g(\hf)$ and $ (\{1,y\},\emptyset,\ldots,\emptyset)\in g(\hf)$ by \eqref{ineq-4.12} , which contradicts that $g(\hf)$ is an anti-chain. It follows that $\vec{E}\in \hc$.
Combining Lemma \ref{lem-key} (iii), we infer that $\hc$ and $\hd$ satisfy the three conditions of Lemma \ref{lem-key4}.
Then by Lemma \ref{lem-key4}, one of $g(\hf)\cup \hc'\setminus \hd$ and $g(\hf)\cup \hd'\setminus \hc$ is a star.
By \eqref{ineq-4.12},  we infer that $g(\hf)\cup \hc'\setminus \hd$ is  a star of the first coordinate with centre 1.  If there exists an $\vec{E}\in g(\hf)$ with $1\notin E_1$ such that $E_j=\emptyset$, then $\vec{E}\in (g(\hf)\setminus \hd)\cup \hc'$, a contradiction. Thus, for any $\vec{E}\in g(\hf)$ with $1\notin E_1$ we have $E_j=\{1\}$ for all $j=r+1,\ldots,q$.

{\bf Case 2.} $r+1\leq t\leq q$.

The proof is similar to above.
By permuting parts of $\prod_{1\leq i\leq p}\binom{[n_i]}{k_i}$, we may assume $t=r+1$.  Let $\vec{E}\in g(\hf)$ with $1\notin E_{r+1}$.  By Claim  \ref{claim-3}, we infer that $E_i =[\ell_i]$ for $1\leq i\leq r$. Since $1\notin E_{r+1}$ and $\ell_{r+1}=1$ we have $E_{r+1} =\emptyset$. Thus,
for any $\vec{E}\in g(\hf)$ with $1\notin E_{r+1}$ we have
 \begin{align}\label{ineq-4.2}
 (E_1,E_2\ldots,E_r,E_{r+1})=([\ell_1],[\ell_2]\ldots,[\ell_r],\emptyset).
 \end{align}
 It follows that for every $x\in [\ell_i]$ with $i\leq r$,
 \begin{align}\label{ineq-4.22}
(\emptyset,\ldots,\{x\},\ldots,\emptyset,\{1\},\ldots,\emptyset)\in g(\hf).
\end{align}

Let us fix a $j\in [r+2,q]$. Define
\[
\hg=\{\vec{E}\in g(\hf)\colon E_j=\{1\},\ (E_1,E_2\ldots,E_r,E_{r+1})=([\ell_1],[\ell_2]\ldots,[\ell_r],\emptyset)\},
\]
\[
\hh=\{\vec{E}\in g(\hf)\colon E_j=\{1\},\ (E_1,E_2,\ldots,E_r,E_{r+1})=(\emptyset,\emptyset,\ldots,\emptyset,\{1\})\},
\]
 \[
\hg'= \{(E_1,\ldots,E_j\setminus\{1\},\ldots,E_p)\colon \vec{E}\in \hg\},\ \hh'= \{(E_1,\ldots,E_j\setminus\{1\},\ldots,E_p)\colon \vec{E}\in \hh\}.
 \]
 Since $\ell_j=1$, there exists $\vec{E}\in g(\hf)$ with $E_j=\{1\}$. If $1\notin E_{r+1}$, then by \eqref{ineq-4.2} we have $(E_1,E_2\ldots,E_r,E_{r+1})=([\ell_1],[\ell_2]\ldots,[\ell_r],\emptyset)$.
 It follows that $\vec{E}\in \hg$.
 If $1\in E_{r+1}$ then by \eqref{ineq-4.22} and the fact that $g(\hf)$ is an anti-chain we infer that $(E_1,E_2,\ldots,E_r,E_{r+1})=(\emptyset,\emptyset,\ldots,\emptyset,\{1\})$.
 It follows that $\vec{E}\in \hh$.
Combining Lemma \ref{lem-key} (iii), we infer that $\hg$ and $\hh$ satisfy the three conditions of Lemma \ref{lem-key4}.
Then by Lemma \ref{lem-key4}, one of $g(\hf)\cup \hg'\setminus \hh$ and $g(\hf)\cup \hh'\setminus \hg$ is a star.
 By \eqref{ineq-4.22}, we infer that $(g(\hf)\setminus \hh)\cup \hg'$ is  a star of the $(r+1)$th coordinate with centre 1.  If there exists an $\vec{E}\in g(\hf)$ with $1\notin E_{r+1}$ such that $E_j=\emptyset$, then $\vec{E}\in (g(\hf)\setminus \hh)\cup \hg'$, a contradiction. Thus, for any $\vec{E}\in g(\hf)$ with $1\notin E_{r+1}$ we have $E_j=\{1\}$ for all $j=r+2,\ldots,q$. Therefore the claim holds.
\end{proof}

By the maximality of $|\hf|$ and Claim \ref{claim-8}, we conclude that
 \[
g(\hf)=\left\{\vec{E}=(E_1,E_2,\ldots,E_p)\colon  1\in E_t, \ |\vec{E}|=2, \  |\vec{E}\cap \vec{E}^*|= 1\right\}\cup \{ \vec{E}^*\},
\]
with $E_t^*=[2,\ell_t]$ and $E_j^*=[\ell_j]$ for  $j\neq t$. Thus the lemma is proven.

\section{Proof of Theorem \ref{thm-1}}

\begin{proof}
[Proof of the inequality]
Let $\hf_0\subset \prod\limits_{1\leq i\leq p}\binom{[n_i]}{k_i}$ be a  non-trivial intersecting family of the maximum size.
Using Lemma \ref{thm-key}, one can obtain a shifted, non-trivial intersecting family $\hf$ with $|\hf|=|\hf_0|$ by repeated shifting.
Let $g(\hf)\in G_*(\mathcal F)$ with  $s^+(g(\hf))=(\ell_1,\ell_2,\ldots,\ell_p)$ such that $\sum_{\vec F\in g(\hf)}|\vec F|$ is minimal.
By Lemma \ref{lem-4.1}, we have  $\ell_t\geq 1$ for some $t\in [p]$ and
 \[
g(\hf)=\left\{\vec{E}=(E_1,E_2,\ldots,E_p)\colon  1\in E_t, \ |\vec{E}|=2, \  |\vec{E}\cap \vec{E}^*|= 1\right\}\cup \{ \vec{E}^*\},
\]
where $E_t^*=[2,\ell_t]$, $E_j^*=[\ell_j]$ for  $j\neq t$.
Clearly we have $1\leq \ell_t\leq k_t+1$ and $0\leq \ell_j\leq k_j$ for $j\neq t$.

\begin{claim}\label{claim-4.1}
For each $i\in [p]\setminus \{t\}$, we have $\ell_i=0$ or $k_i$.
\end{claim}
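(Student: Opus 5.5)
Since Lemma \ref{lem-4.1} already pins down $g(\hf)$ completely in terms of the vector $(\ell_1,\dots,\ell_p)$, the plan is to compute $|\hf|$ as an explicit function of the $\ell_i$. We then show that, for fixed $i\neq t$ and all other parameters frozen, this function is maximized only at $\ell_i\in\{0,k_i\}$ once $n_i>2k_i$.

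Step 1 is to write down $\hf$. By Lemma \ref{lem-key}(ii) and the description of $g(\hf)$, $\vec F\in\hf$ if and only if one of the following holds:
\begin{itemize}
\item $1\in F_t$ and $F_t\cap[2,\ell_t]\neq\emptyset$;
\item $1\in F_t$ and $F_j\cap[\ell_j]\neq\emptyset$ for some $j\neq t$;
\item $\vec F\in\hu(\vec E^*)$.
\end{itemize}
Hence, writing $N_j=\binom{n_j}{k_j}$, we get
\[
|\hf|=\prod_{j}N_j\Bigl(P_t-\bigl(P_t-Q_t\bigr)\prod_{j\ne t}\Bigl(1-\tfrac{\binom{n_j-\ell_j}{k_j}}{N_j}\Bigr)\Bigr)+R,
\]
where $P_t=\binom{n_t-1}{k_t-1}/N_t$ and $Q_t=\binom{n_t-\ell_t}{k_t-\ell_t+1}/N_t$. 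Here $R=|\{\vec F\in\hu(\vec E^*): 1\notin F_t\}|=\binom{n_t-\ell_t}{k_t-\ell_t+1}\prod_{j\neq t}\binom{n_j-\ell_j}{k_j-\ell_j}$, and the $\hu(\vec E^*)$ sets containing $1$ are already counted. I would double-check this formula against $|\hh\hm_1|$ and $|\hh\hm_2|$ in the cases $\ell_i\in\{0,k_i\}$.

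Step 2 isolates a fixed $i\ne t$. Setting $x=\ell_i\in[0,k_i]$, the size has the form
\[
|\hf|=\alpha-\beta\binom{n_i-x}{k_i}+\gamma\binom{n_i-x}{k_i-x},
\]
with $\alpha,\beta,\gamma\ge0$ independent of $x$ (and $\beta>0$ unless the other factors degenerate). Put $f(x)=-\beta\binom{n_i-x}{k_i}+\gamma\binom{n_i-x}{k_i-x}$. The key claim is that $f$ cannot attain its maximum over $\{0,\dots,k_i\}$ at an interior point. To show this, compute the increment $f(x+1)-f(x)=\beta\binom{n_i-x-1}{k_i-1}-\gamma\binom{n_i-x-1}{k_i-x}$. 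Then show the sign pattern of these increments is $-,\dots,-,+,\dots,+$, i.e.\ the ratio $\binom{n_i-x-1}{k_i-x}\big/\binom{n_i-x-1}{k_i-1}$ is strictly decreasing in $x$. That ratio is $\frac{(k_i-1)!\,(n_i-k_i)!}{(k_i-x)!\,(n_i-k_i+x-1)!}$, and consecutive quotients are $\frac{k_i-x}{n_i-k_i+x}<1$ because $n_i>2k_i$. Thus $f$ is first decreasing and then increasing (a discrete ``convexity''), so the maximum is at an endpoint. If it were attained at an interior $\ell_i$, some endpoint would give a strictly larger non-trivial intersecting family, contradicting maximality.

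Step 3 is a legitimacy check and the main obstacle: replacing $\ell_i$ by $0$ or $k_i$ must still give a valid \emph{non-trivial} intersecting family. Intersection is clear from the structure. Non-triviality holds with $\ell_i=0$ unless $E^*$ becomes too small (i.e.\ $(S,t)$ leaves $\Omega_1$ or $\Omega_2$). With $\ell_i=k_i$ it is fine, except possibly when the new $\vec E^*$ is already of size making some condition degenerate. I would handle these degenerate cases separately: there, one endpoint is forced, and I would show the strict inequality still favors that endpoint. I would also need that the increments are strictly nonzero somewhere, so that equality in the interior is excluded; this follows from the strict decrease of the ratio.
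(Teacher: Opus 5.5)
\textbf{Verdict: there is a genuine gap, in Step 3.} Your Step 2 reduction is sound, but the degenerate cases cannot be handled by varying $\ell_i$ alone.

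\textbf{What works.} The correct count is
\[
|\hf|=\binom{n_t-1}{k_t-1}\prod_{j\neq t}\binom{n_j}{k_j}-\binom{n_t-\ell_t}{k_t-1}\prod_{j\neq t}\binom{n_j-\ell_j}{k_j}+\binom{n_t-\ell_t}{k_t-\ell_t+1}\prod_{j\neq t}\binom{n_j-\ell_j}{k_j-\ell_j}.
\]
Your displayed formula is wrong in two places. It multiplies the events ``$F_j$ meets $[\ell_j]$'' (an AND where you need an OR), and $Q_t$ should involve $\binom{n_t-\ell_t}{k_t-1}$. Your ratio computation also slips: the consecutive quotient is $\frac{k_i-x}{n_i-k_i-x}$, which is $<1$ exactly because $n_i>2k_i$.

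With these corrections, $|\hf|=\alpha-\beta\binom{n_i-x}{k_i}+\gamma\binom{n_i-x}{k_i-x}$ with $\beta,\gamma>0$. Your increment $f(\ell_i+1)-f(\ell_i)$ is precisely the paper's $w_1\binom{n_i-\ell_i-1}{k_i-1}-w_2\binom{n_i-\ell_i-1}{k_i-\ell_i}$. The discrete convexity then shows that an interior $\ell_i$ loses strictly to $\ell_i=0$ or to $\ell_i=k_i$. When both endpoint configurations are non-trivial intersecting, this proves the claim, and it even avoids the paper's appeal to the uniqueness of the structure in Lemma~\ref{lem-4.1}.

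\textbf{The gap.} Suppose $(\ell_t-1)+\sum_{j\neq t,i}\ell_j\le 1$. Then the configuration $\ell_i=0$ is a star or not even intersecting. Your assertion that the strict inequality then favours $\ell_i=k_i$ is false.

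Take $p=2$, $(n_t,k_t)=(3,1)$, $(n_i,k_i)=(7,3)$, $\ell_t=1$, $\ell_i=2$. This gives
\[
\hf=\{\vec F\colon 1\in F_t,\ F_i\cap[2]\neq\emptyset\}\cup\{\vec F\colon [2]\subset F_i\}.
\]
This family is non-trivial intersecting, has the structure of Lemma~\ref{lem-4.1} with centre coordinate $t$, and $|\hf|=25+10=35$. Now compare the other values of $\ell_i$:
\begin{itemize}
\item $\ell_i=3$ gives $31+2=33<35$;
\item $\ell_i=1$ gives the star $\{1\in F_i\}$;
\item $\ell_i=0$ gives the whole product.
\end{itemize}
So no variation of $\ell_i$ yields a contradiction. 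Similar failures occur, for suitable parameters, when $\ell_t=\ell_j=\ell_i=1$ and $k_i=2$.

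\textbf{The missing idea.} The paper uses a second, ``reflected'' perturbation alongside your $\ell_i\mapsto\ell_i+1$ move. It replaces the generator $\vec R$ (with $R_t=\{1\}$, $R_i=\{\ell_i\}$) by $\vec R''$ with $R''_i=\{\ell_i,\ell_i+1\}$. It also adds $\vec E''$, obtained from $\vec E^*$ by replacing $[\ell_i]$ with $[\ell_i-1]\cup\{\ell_i+1\}$.

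This family is again non-trivial intersecting, and it needs only $1\le\ell_i\le k_i-1$. Its size changes by exactly $-(f(\ell_i+1)-f(\ell_i))$. Hence maximality forces $f(\ell_i+1)=f(\ell_i)$, and the paper then gets a maximum family contradicting Lemma~\ref{lem-4.1}. In the example above, the reflected family has size $37>35$.

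Because this argument is local at $\ell_i$, whether the endpoint configurations are legitimate never comes up. That is exactly what your endpoint comparison cannot supply in the degenerate cases.
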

\begin{proof}
Suppose for contradiction that $1\leq \ell_i\leq k_i-1$. Define  $\vec{R}=(R_1,R_2,\ldots,R_p)$, $\vec{R}'=(R_1',R_2',\ldots,R_p')$ and $\vec{R}''=(R_1'',R_2'',\ldots,R_p'')$ by letting $R_t=R_t'=R_t''=\{1\}$, $R_j=R_j'=R_j''=\emptyset$ for all $j\in [p]\setminus \{t,i\}$ and $R_i=\{\ell_i\}$, $R_i'=\{\ell_{i}+1\}$, $R_i''=\{\ell_i,\ell_{i}+1\}$. Define  $\vec{E}'=(E_1',E_2',\ldots,E_p')$ and $\vec{E}''=(E_1'',E_2'',\ldots,E_p'')$ by letting $E_t'=E_t''=[2,\ell_t]$, $E_j'=E_j''=[\ell_j]$ for all $j\in [p]\setminus \{t,i\}$ and $E_i'=[\ell_i+1]$, $E_i''=[\ell_i-1]\cup \{\ell_i+1\}$.
Define
\[
\hf_1=\hhd(g(\hf)\cup \{\vec{R}',\vec{E}'\}\setminus \{\vec{E}^*\})\mbox{ and }\hf_2=\hhd(g(\hf)\cup \{\vec{R}'',\vec{E}''\}\setminus \{\vec{R}\}).
\]
It is straightforward to verify that both  $\hf_1$ and $\hf_2$ are both non-trivial intersecting. Note that
\begin{small}
\[
|\hhd(\vec{R}')\setminus \hf| = \binom{n_i-\ell_i-1}{k_i-1}\binom{n_t-\ell_t}{k_t-1}\prod_{j\in [p]\setminus \{t,i\}}\binom{n_j-\ell_j}{k_j} =:w_1\binom{n_i-\ell_i-1}{k_i-1}
\]\end{small}
and
\begin{small}
\[
|\hhd(\vec{E}^*)\setminus \hhd(g(\hf)\cup \{\vec{E}'\})| =\binom{n_i-\ell_i-1}{k_i-\ell_i}\binom{n_t-\ell_t}{k_t-\ell_t+1}\prod_{j\in [p]\setminus \{t,i\}}\binom{n_j-\ell_j}{k_j-\ell_j}=:w_2\binom{n_i-\ell_i-1}{k_i-\ell_i}.
\]\end{small}
It follows that
\begin{align*}
|\hf_1| &= |\hf|+|\hhd(\vec{R}')\setminus \hf|-|\hhd(\vec{E}^*)\setminus \hhd(g(\hf)\cup \{\vec{E}'\})|\\[3pt]
&= |\hf|+w_1\binom{n_i-\ell_i-1}{k_i-1}-w_2\binom{n_i-\ell_i-1}{k_i-\ell_i}.
\end{align*}
Since
\begin{small}
\[
|\hhd(\vec{E}'')\setminus \hf| =\binom{n_i-\ell_i-1}{k_i-\ell_i}\binom{n_t-\ell_t}{k_t-\ell_t+1}\prod_{j\in [p]\setminus \{t,i\}}\binom{n_j-\ell_j}{k_j-\ell_j}=w_2\binom{n_i-\ell_i-1}{k_i-\ell_i}
\]
\end{small}
and
\begin{small}
\[
|\hhd(\vec{R})\setminus \hhd(g(\hf)\cup \{\vec{R}''\})| =\binom{n_i-\ell_i-1}{k_i-1}\binom{n_t-\ell_t}{k_t-1}\prod_{j\in [p]\setminus \{t,i\}}\binom{n_j-\ell_j}{k_j} =w_1\binom{n_i-\ell_i-1}{k_i-1},
\]
\end{small}
it follows that
\[
|\hf_2| = |\hf|+w_2\binom{n_i-\ell_i-1}{k_i-\ell_i}-w_1\binom{n_i-\ell_i-1}{k_i-1}.
\]
By the maximality of  $|\hf|$, we infer that
\[
w_2\binom{n_i-\ell_i-1}{k_i-\ell_i}=w_1\binom{n_i-\ell_i-1}{k_i-1}.
\]
Then  $\hf_2$ is also a  non-trivial intersecting family of the maximum size. Moreover, $\hf_2$ is shifted with $g(\hf_2)=g(\hf)\cup \{\vec{R}'',\vec{E}''\}\setminus \{\vec{R}\}$, contradicting Lemma \ref{lem-4.1}. Thus the claim holds.
\end{proof}

\begin{claim}\label{claim-4.2}
If $2\leq \ell_t\leq k_t$, then either $\hf=\hht(\vec{\mathstrut n},\vec{\mathstrut k},t)$ with $k_t=3$ or $\hf=\hh\hm_2(\vec{\mathstrut n},\vec{\mathstrut k},\{t\},i)$ with $k_t=2$ and $k_i=1$.
\end{claim}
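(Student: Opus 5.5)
We work with the structure given by Lemma \ref{lem-4.1} and Claim \ref{claim-4.1}: $g(\hf)$ consists of $\vec E^*$ with $E_t^*=[2,\ell_t]$ and $E_j^*=[\ell_j]$ for $j\neq t$, together with all $\vec E$ with $1\in E_t$, $|\vec E|=2$ and $|\vec E\cap\vec E^*|=1$. By Claim \ref{claim-4.1}, each $\ell_i\in\{0,k_i\}$ for $i\neq t$. Put $S=\{i\neq t\colon \ell_i=k_i\}$.

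The plan is to show that, when $2\le \ell_t\le k_t$, the configuration can be locally improved unless one of the two listed exceptional families occurs. This mirrors the proof of Claim \ref{claim-4.1}, with the perturbation now acting in the $t$th coordinate.

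\textbf{Main comparison: $\ell_t$ versus $\ell_t+1$.} Write $\hf(\ell)$ for the family generated as above with $\ell_t=\ell$ and $S$ fixed. Moving $\ell_t$ to $\ell_t+1$ does two things.
\begin{itemize}
\item It adds the generator $(\{1,\ell_t+1\},\emptyset,\ldots,\emptyset)$.
\item It replaces $\hhd(\vec E^*)$ by $\hhd(\vec E^*\cup\{\ell_t+1\}\text{ in coordinate }t)$.
\end{itemize}
Moving $\ell_t$ to $\ell_t-1$ does the reverse. Both new families stay non-trivial intersecting when $\ell_t\ge2$. The decrease requires $\ell_t-1\ge1$, with the convention that when $\ell_t=2$ and $S=\emptyset$ the resulting family degenerates. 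The increase requires $\ell_t+1\le k_t+1$.

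As in Claim \ref{claim-4.1}, write each change as $\pm w_1\binom{\cdot}{\cdot}\mp w_2\binom{\cdot}{\cdot}$. Maximality of $|\hf|$ forces both differences to be $\le 0$, which gives two inequalities. Multiplying them, as in the proof of Lemma \ref{lem-key4}, reduces everything to a one-coordinate inequality in $n_t,k_t,\ell_t$ alone, because the factors $\prod_{j\in S}\binom{n_j-k_j}{0}$ and $\prod_{j\notin S\cup\{t\}}\binom{n_j}{k_j}$ are common to both sides. I expect this product inequality to fail strictly whenever $n_t>2k_t$. This is the same convexity phenomenon behind the classical fact that, for Hilton--Milner-type families, the intermediate values $\ell_t$ are never optimal. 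The only exceptions are the boundary situations where one of the two moves is unavailable or produces a trivial family.

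\textbf{Boundary cases.}
\begin{itemize}
\item If $S=\emptyset$, then $\ell_t=2$ makes the family a star, so only $\ell_t\ge3$ matters. Then $\ell_t=3$ is the triangle case: $g(\hf)=\{\{1,2\},\{1,3\},\{2,3\}\}$ in coordinate $t$. The classical Hilton--Milner/Frankl comparison in coordinate $t$ shows $|\hht|\ge|\hh\hm|$ only when $k_t\le3$. Since $\ell_t\le k_t$, this forces $k_t=3$ and gives $\hf=\hht(\vec n,\vec k,t)$.
\item If $S\neq\emptyset$ and $\ell_t=2$, then $g(\hf)$ consists of $(\{1,2\},\ldots)$, the pairs $(\{1\},\{x\}_i)$ for $x\in[k_i]$, $i\in S$, and $\vec E^*=(\{2\},[k_i]_{i\in S})$. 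Compare with the $\hh\hm_2$ configuration, which drops coordinate $t$ from $\vec E^*$. The surviving equality case should be exactly $k_t=2$, $S=\{i\}$ with $k_i=1$, i.e.\ $\hf=\hh\hm_2(\vec n,\vec k,\{t\},i)$ after relabelling the roles of $t$ and $i$.
\end{itemize}

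\textbf{Main obstacle.} The hardest step is the sign analysis of the product inequality at the boundary values $\ell_t=2$ and $\ell_t=k_t$. There the moves change the intersecting structure, and the two weights $w_1,w_2$ no longer cancel cleanly. I would handle these by direct computation of the two families, aiming to show strict improvement except in the two listed cases.
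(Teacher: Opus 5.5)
Your chain argument is correct where it applies. Let $f(\ell)$ be the size of the family with $\ell_t=\ell$ and $S$ fixed. The inequalities $f(\ell_t+1)\le f(\ell_t)$ and $f(\ell_t-1)\le f(\ell_t)$ combine to
\[
\binom{n_t-\ell_t}{k_t-\ell_t+2}\binom{n_t-\ell_t-1}{k_t-2}\le \binom{n_t-\ell_t-1}{k_t-\ell_t+1}\binom{n_t-\ell_t}{k_t-2},
\]
and this simplifies exactly to $n_t\le 2k_t$. So the argument disposes of every case in which the family with $\ell_t-1$ is still non-trivial. That means all cases except $S=\emptyset$ with $\ell_t\in\{2,3\}$, and $S=\{i\}$, $k_i=1$, $\ell_t=2$. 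It is a cleaner substitute for the paper's step ``$\hf_2$ is shifted for $\ell_t\ge 4$, contradicting Lemma \ref{lem-4.1}''. Your treatment of $S=\emptyset$ (star for $\ell_t=2$, Hilton--Milner comparison for $\ell_t=3$) is also fine.

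The gap is the case $\ell_t=2$, $S=\{i\}$, $k_i=1$, which is precisely where the second exceptional family lives.

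\textbf{Why your tools fail here.} Dropping coordinate $t$ from $\vec E^*$ gives $\vec E^*=(\emptyset,\ldots,\{1\},\ldots,\emptyset)$ with $\{1\}$ in coordinate $i$. The resulting family is the star $\{\vec F\colon 1\in F_i\}$, so the comparison you propose is unavailable. Increasing $\ell_t$ alone does not force $k_t=2$ either. Take $p=2$, $n_t=20$, $k_t=3$, $n_i=3$, $k_i=1$. Then $|\hf|=360$, while $\ell_t=3$ and $\ell_t=4$ give $258$ and $274$. Every other member of your fixed-$(S,t)$ chain is smaller, yet $\hf$ is not maximum, since $|\hh\hm_2(\vec n,\vec k,\{t\},i)|=462$. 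So ``the surviving equality case should be exactly $k_t=2$'' is asserted, not proved, and no comparison inside your chain can prove it; you must leave the chain.

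\textbf{The missing idea.} The paper uses the swap family $\hf_2=\hhd(g(\hf)\cup\{\vec R'',\vec E''\}\setminus\{\vec R\})$. In coordinate $t$ it replaces the generator $\{1,\ell_t\}$ by $\{1,\ell_t,\ell_t+1\}$ and adds $[2,\ell_t-1]\cup\{\ell_t+1\}$.
\begin{itemize}
\item It satisfies $|\hf_1|+|\hf_2|=2|\hf|$, where $\hf_1$ is your $\ell_t+1$ family.
\item Maximality therefore forces $|\hf_1|=|\hf|$, which is \eqref{ineq-4.23}.
\item Since $S\neq\emptyset$ gives $w_1>w_2$, the $\ell_t=4$ family is then strictly larger whenever $k_t\ge 3$.
\end{itemize}
For $\ell_t=2$ this $\hf_2$ is $\{\vec F\colon [3]\subseteq F_t\}\cup\{\vec F\colon 1\in F_i,\ F_t\cap[3]\neq\emptyset\}$, which is the family of size $462$ above.

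Neither $\hf_1$ nor $\hf_2$ alone beats $\hf$ in general. For $n_t=7$ only $\hf_1$ does ($37>35>33$); for $n_t=20$ only $\hf_2$ does. So it is the identity, not any single comparison, that closes this case.

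Finally, your ``main obstacle'' at $\ell_t=k_t$ is a red herring. The move to $\ell_t+1\le k_t+1$ is always legitimate, and the product inequality applies there unchanged.
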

\begin{proof}
Part of this proof is similar to the one above.
Define  $\vec{R}=(R_1,R_2,\ldots,R_p)$, $\vec{R}'=(R_1',R_2',\ldots,R_p')$ and $\vec{R}''=(R_1'',R_2'',\ldots,R_p'')$ by letting  $R_j=R_j'=R_j''=\emptyset$ for all $j\in [p]\setminus \{t\}$ and $R_t=\{1,\ell_t\}$, $R_t'=\{1,\ell_t+1\}$, $R_t''=\{1,\ell_t,\ell_t+1\}$. Define  $\vec{E}'=(E_1',E_2',\ldots,E_p')$ and $\vec{E}''=(E_1'',E_2'',\ldots,E_p'')$ by letting $E_j'=E_j''=[\ell_j]$ for all $j\in [p]\setminus \{t\}$ and $E_t'=[2,\ell_t+1]$, $E_t''=[2,\ell_t-1]\cup \{\ell_t+1\}$.
Define
\[
\hf_1=\hhd(g(\hf)\cup \{\vec{R}',\vec{E}'\}\setminus \{\vec{E}^*\})\mbox{ and }\hf_2=\hhd(g(\hf)\cup \{\vec{R}'',\vec{E}''\}\setminus \{\vec{R}\}).
\]
Then it is easy to check that  $\hf_1$ and $\hf_2$ are both non-trivial intersecting. Note that
\[
|\hhd(\vec{R}')\setminus \hf| = \binom{n_t-\ell_t-1}{k_t-2}\prod_{j\in [p]\setminus \{t\}}\binom{n_j-\ell_j}{k_j} =:w_1\binom{n_t-\ell_t-1}{k_t-2}
\]
and
\[
|\hhd(\vec{E}^*)\setminus \hhd(g(\hf)\cup \{\vec{E}'\})| =\binom{n_t-\ell_t-1}{k_t-\ell_t+1}\prod_{j\in [p]\setminus \{t\}}\binom{n_j-\ell_j}{k_j-\ell_j}=:w_2\binom{n_t-\ell_t-1}{k_t-\ell_t+1}.
\]
It follows that
\begin{align*}
|\hf_1| &= |\hf|+|\hhd(\vec{R}')\setminus \hf|-|\hhd(\vec{E}^*)\setminus \hhd(g(\hf)\cup \{\vec{E}'\})|\\[3pt]
&= |\hf|+w_1\binom{n_t-\ell_t-1}{k_t-2}-w_2\binom{n_t-\ell_t-1}{k_t-\ell_t+1}.
\end{align*}
Since
\[
|\hhd(\vec{E}'')\setminus \hf| =\binom{n_t-\ell_t-1}{k_t-\ell_t+1}\prod_{j\in [p]\setminus \{t\}}\binom{n_j-\ell_j}{k_j-\ell_j}=w_2\binom{n_t-\ell_t-1}{k_t-\ell_t+1}
\]
and
\[
|\hhd(\vec{R})\setminus \hhd(g(\hf)\cup \{\vec{R}''\})| =\binom{n_t-\ell_t-1}{k_t-2}\prod_{j\in [p]\setminus \{t\}}\binom{n_j-\ell_j}{k_j} =w_1\binom{n_t-\ell_t-1}{k_t-2},
\]
it follows that
\[
|\hf_2| = |\hf|+w_2\binom{n_t-\ell_t-1}{k_t-\ell_t+1}-w_1\binom{n_t-\ell_t-1}{k_t-2}.
\]
By the maximality of  $|\hf|$, we infer that
\begin{align}\label{ineq-4.23}
w_2\binom{n_t-\ell_t-1}{k_t-\ell_t+1}=w_1\binom{n_t-\ell_t-1}{k_t-2}.
\end{align}
Then  $\hf_1$ and $\hf_2$ are both non-trivial intersecting families of the maximum size.
Note that $\hf_1$ is shifted, and $\hf_2$ is shifted if and only if $\ell_t \ge 4$.
Hence for $\ell_t \ge 4$, we obtain a $\hf_2$ with different structure, contradicting Lemma \ref{lem-4.1}. So assume that  $\ell_t\le 3$.

By Claim \ref{claim-4.1}, we have $\ell_i=0$ or $k_i$ for all $i\in [p]\setminus \{t\}$. It follows that
\[
w_2=\prod_{j\in [p]\setminus \{t\}}\binom{n_j-\ell_j}{k_j-\ell_j} \leq \prod_{j\in [p]\setminus \{t\}}\binom{n_j-\ell_j}{k_j}=w_1,
\]
where the equality holds if and only if  $\ell_j=0$ for all $j\in [p]\setminus \{t\}$.

Firstly, assume that $\ell_t=3$. By \eqref{ineq-4.23}, we have $\sum_{i\in [p]\setminus \{t\}} \ell_i= 0$.
If $k_t \geq 4$ then let
\[
\hf_3=\left\{\vec{F}\in \prod_{1\leq i\leq p} \binom{[n_i]}{k_i}\colon F_t=[2,k_t+1] \mbox{ or } 1\in F_t, F_t\cap[2,k_t+1]\neq \emptyset\right\}.
\]
By Theorem \ref{thm-hm} and $k_t\geq 4$, we infer $|\hf|=|\hf_1|<|\hf_3|$, contradicting the maximality of $|\hf|$. Thus we have $k_t=\ell_t=3$  and $\hf=\hht(\vec{\mathstrut n},\vec{\mathstrut k},t)$ with $k_t=3$.

Now assume that $\ell_t=2$. By \eqref{ineq-4.23}, we have $\sum_{i\in [p]\setminus \{t\}} \ell_i> 0$.
If $k_t\geq 3$ then let
\[
\hg_1=\left\{\vec{E}=(E_1,E_2,\ldots,E_p)\colon 1\in E_t,\ |\vec{E}\cap \vec{G}^*|= 1\right\}\cup \{ \vec{G}^*\},
\]
where $G_t^*=[2,\ell_t+2]$ and $G_j^*=[\ell_j]$ for  $j\neq t$.
\begin{align*}
|\hhd(\hg_1)| &=|\hf_1|+w_1\binom{n_t-\ell_t-2}{k_t-2}-w_2\binom{n_t-\ell_t-2}{k_t-\ell_t}\\[3pt]
&=|\hf_1|+w_1\binom{n_t-4}{k_t-2}-w_2\binom{n_t-4}{k_t-2}>|\hf_1|=|\hf|,
\end{align*}
contradicting the maximality of $|\hf|$.  Thus $k_t=\ell_t=2$. If $\sum_{j\in [p]\setminus \{t\}} \ell_j\geq 2$, then let
\[
\hg_2=\left\{\vec{E}=(E_1,E_2,\ldots,E_p)\colon 1\in E_t,\ |\vec{E}\cap \vec{G}'|= 1\right\}\cup \{ \vec{G}'\}
\]
with $G_t'=\emptyset$ and $G_j'=[\ell_j]$ for  $j\neq t$. By \eqref{ineq-4.23} and $\ell_t=k_t=2$, $w_1=w_2(n_t-3)$.
Then by $n_t>2\ell_t=4$,
\begin{align*}
|\hhd(\hg_2)| &=|\hf|+w_2\binom{n_t-\ell_t}{k_t}-w_1\binom{n_t-\ell_t}{k_t-\ell_t}=|\hf|+w_2\binom{n_t-2}{2}-w_2(n_t-3)>0,
\end{align*}
a contradiction. Thus $\sum_{j\in [p]\setminus \{t\}} \ell_j=1$. By Claim \ref{claim-4.1} there exists $i\in [p]\setminus \{t\}$ such that $\ell_i=k_i=1$ and $\ell_j=0$ for all $j\in [p]\setminus \{t,i\}$. In this case we have $\hf=\hh\hm_2(\vec{\mathstrut n},\vec{\mathstrut k},\{t\},i)$ with $k_t=2$ and $k_i=1$.
\end{proof}

\medskip
Let $S=\{j\in [p]\setminus \{t\}\colon \ell_j=k_j\}$.
If $\ell_t=1$, then $\hf=\hh\hm_2(\vec{\mathstrut n},\vec{\mathstrut k},S,t)$.  If $\ell_t=k_t+1$, then $\hf=\hh\hm_1(\vec{\mathstrut n},\vec{\mathstrut k},S,t)$. If $2\leq \ell_t\leq k_t$, then by Claim \ref{claim-4.2} we have $\hf=\hht(\vec{\mathstrut n},\vec{\mathstrut k},t)$ with $k_t=3$ or $\hf=\hh\hm_2(\vec{\mathstrut n},\vec{\mathstrut k},\{t\},i)$ with $k_t=2$ and $k_i=1$. Since $|\hht(\vec{\mathstrut n},\vec{\mathstrut k},t)|=|\hh\hm_1(\vec{\mathstrut n},\vec{\mathstrut k},\emptyset,t)|$ for $k_t=3$, we conclude that
\[
|{\hf}| = \max \{M_1(\vec{\mathstrut n},\vec{\mathstrut k}),M_2(\vec{\mathstrut n},\vec{\mathstrut k})\}.
\]

{\em Proof of the extremal structures.}
Before the proof, we introduce a definition we need.
Two families of sets $\mathcal{A}$ and $\mathcal{B}$ are called cross-intersecting if $A\cap B\neq \emptyset$ for all $A\in \mathcal{A}$,  $B\in \mathcal{B}$.
Recall that $\hf$ is obtained from $\hf_0$ by a series of shifting operations.
We will show that $\hf$ is isomorphic to $\hf_0$ in the cases of $\hf=\hht(\vec{\mathstrut n},\vec{\mathstrut k},t)$, or $\hf=\hh\hm_1(\vec{\mathstrut n},\vec{\mathstrut k},S,t)$ for some $(S,t)\in \Omega_1(\vec{\mathstrut n},\vec{\mathstrut k})$, or $\hf=\hh\hm_2(\vec{\mathstrut n},\vec{\mathstrut k},S,t)$ for some $(S,t)\in \Omega_2(\vec{\mathstrut n},\vec{\mathstrut k})$.
For the contrary suppose that there exist a family $\hf'$ such that $\hf'$ is not isomorphic to $\hf$, and $S_{i,j}^{\ell}(\hf')=\hf$ for some integers $i,j,\ell$ with $1\le \ell \le p$ and $1\le i<j\le n_{\ell}$.

{\bf Case 1}. $\hf=\hht(\vec{\mathstrut n},\vec{\mathstrut k},t)$.

In this case, it must be $\ell=t$, $i\in [3]$ and $j\in [4,n_t]$.
Without loss of generality assume that $i=1$ and $j=4$.
Denote $\hf_t'=\{F\cap [n_t]:F\in \hf'\}$.
Let
\[\hh_1=\{H\in \binom{[5,n_t]}{k_t-2}:H\cup \{1,2\}\in \hf_t'\} \text{ and } \hh_2=\{H\in \binom{[5,n_t]}{k_t-2}:H\cup \{2,4\}\in \hf_t'\}.\]
Since $S_{1,4}^{t}(\hf')=\hf$, we have $\hh_1\cup \hh_2=\binom{[5,n_t]}{k_t-2}$ and $\hh_1\cap \hh_2=\emptyset$.
Furthermore, since $\hf'$ is not isomorphic to $\hf$, we have $\hh_2\neq \emptyset$.
Let
\[\hh_3=\{H\in \binom{[5,n_t]}{k_t-2}:H\cup \{1,3\}\in \hf_t'\}.\]
Notice that $\hh_3=\binom{[5,n_t]}{k_t-2}$.
Since $\hf'$ is intersecting, $\hh_2$ and $\hh_3$ are non-empty cross intersecting.
Let $A\in \hh_2$. Since $n_t>2k_t$, we can choose $B\in \binom{[5,n_t]\setminus A}{k_t-2}$.
Thus $B\in \hh_3$, and $A\cap B=\emptyset$, a contradiction.

{\bf Case 2}. $\hf=\hh\hm_1(\vec{\mathstrut n},\vec{\mathstrut k},S,t)$ for some $(S,t)\in \Omega_1(\vec{\mathstrut n},\vec{\mathstrut k})$.

Recall the definition of $\hh\hm_1(\vec{\mathstrut n},\vec{\mathstrut k},S,t)=\hh\hm_{11}\cup \hh\hm_{12}$, where
\begin{small}\[\hh\hm_{11} =\left\{\vec{F}\in\prod_{1\leq i\leq p} \binom{[n_i]}{k_i}\colon  F_j=[k_j] \mbox{ for all }j\in S\mbox{ and }F_t=[2,k_{t}+1]\right\}\]\end{small}
and
\begin{small}
\[\hh\hm_{12}=\left\{\vec{F}\in\prod_{1\leq i\leq p} \binom{[n_i]}{k_i}\colon 1\in F_t,\ F_t\cap [2,k_t+1]\neq \emptyset \mbox{ or } F_j\cap [k_j]\neq \emptyset\mbox{ for some }j\in S\right\}.\]
\end{small}
It is easy to see that $S_{i,j}^{\ell}(\hh\hm_{11})$ is isomorphic to $\hh\hm_{11}$.
So we consider $\hh\hm_{12}$.
Suppose that $S_{i,j}^{\ell}(\hh\hm_{12}')=\hh\hm_{12}$ for some integers $i,j,\ell$ with $1\le \ell \le p$ and $1\le i<j\le n_{\ell}$ such that $\hh\hm_{12}'$ is not isomorphic to $\hh\hm_{12}$.
Clearly, $\ell \notin [p]\setminus (S\cup \{t\})$.

Firstly, suppose that $\ell=t$.
It must be $i=1$. It is not hard to see that $\hh\hm_{12}'\cup \{[2,k_t+1]\}$ is non-trivial intersecting.
 Combining with the fact that $\hh\hm_{12}'\cup \{[2,k_t+1]\}$ is not isomorphic to $\hh\hm(n,k)$ and Theorem \ref{thm-hm}, we have $|\hh\hm_{12}'\cup \{[2,k_t+1]\}|<|\hh\hm_{12}|-1$, i.e., $|\hh\hm_{12}'|<|\hh\hm_{12}|$, contradicting $|\hh\hm_{12}'|=|\hh\hm_{12}|$.

Now assume that $\ell\in S\setminus \{t\}$.
Then it must be $i\in [k_{\ell}]$ and $j\in [k_{\ell}+1, n_{\ell}]$.
Since $S_{i,j}^{\ell}(\hf')=\hf$, there exists $D\in \hf'_{\ell}$ with $j\in D$ such that $D\cap [k_{\ell}]=\emptyset$.
Let $F\in \hf'\setminus \hh\hm_{12}$ with $F_{\ell}=D$, $F_t\cap [2,k_t+1]=\emptyset$, and $F_s\cap [k_s]=\emptyset$ for every $s\in S\setminus \{t,\ell\}$.
For every $q\in [p]\setminus S$, since $n_q>2k_q$, we can choose $G\in \hh\hm_{11}$ with $G_q\cap F_q=\emptyset$.
However, the fact $F\cap G=\emptyset$ contradicts to that $\hf'$ is intersecting.

{\bf Case 3}. $\hf=\hh\hm_2(\vec{\mathstrut n},\vec{\mathstrut k},S,t)$ for some $(S,t)\in \Omega_2(\vec{\mathstrut n},\vec{\mathstrut k})$.

The subcase of $\ell \neq t$ is similar to Case 2.
So we only consider $\ell=t$. However, we can get a contradiction from this subcase by Theorem \ref{EKR} directly.
Thus we complete the proof.
\end{proof}

\section*{Acknowledgements}
This paper originated in 2023, when the first two authors visited the third author at Shaoxing, and the first draft was completed in 2023. 
The first author is supported by the National Natural Science Foundation of China  (No. 12471316).
The last author is supported by the National Natural Science Foundation of China  (No.12371332 and No.11971439).

\section*{AI-use declaration}
No Generative AI tools were used at any stage.

\end{document}